\newtheorem{theorem}{Theorem}[section]
\newtheorem{lemma}{Lemma}[section]
\newtheorem{definition}{Definition}[section]
\newtheorem{example}{Example}[section]
\newtheorem{proposition}{Proposition}[section]
\newtheorem{corollary}{Corollary}[section]
\newtheorem{assumption}{Assumption}[section]
\newtheorem{remark}{Remark}[section]
\newenvironment{proof}{{\noindent \bf Proof:}}{\hfill$\Box$\medskip}
\definecolor{lred}{rgb}{1,0.8,0.8}
\definecolor{lblue}{rgb}{0.8,0.8,1}
\definecolor{dred}{rgb}{0.6,0,0}
\definecolor{dblue}{rgb}{0,0,0.5}
\definecolor{dgreen}{rgb}{0,0.5,0.5}
 \title{KL property of exponent $1/2$ of $\ell_{2,0}$-norm and DC regularized
 factorizations for low-rank matrix recovery\footnote{This research is supported by the National Natural Science Foundation of China under project
 No.11571120 and No.11701186, and the Natural Science Foundation of Guangdong Province under project
 No.2017A030310418.}}
 \author{Shujun Bi\footnote{(bishj@scut.edu.cn)School of Mathematics, South China University of Technology, China.},
 \ Ting Tao\footnote{School of Mathematics, South China University of Technology, China.}\ \ {\rm and}\ \
 Shaohua Pan\footnote{(shhpan@scut.edu.cn) School of Mathematics, South China University of Technology,
 China.}}
\begin{document}

 \maketitle

 \medskip

 \begin{abstract}
  This paper is concerned with the factorization form of the rank regularized
  loss minimization problem. To cater for the scenario in which only a coarse
  estimation is available for the rank of the true matrix, an $\ell_{2,0}$-norm
  regularized term is added to the factored loss function to reduce the rank adaptively;
  and account for the ambiguities in the factorization, a balanced term is then introduced.
  For the least squares loss, under a restricted condition number assumption
  on the sampling operator, we establish the KL property of exponent $1/2$ of
  the nonsmooth factored composite function and its equivalent DC reformulations
  in the set of their global minimizers. We also confirm the theoretical
  findings by applying a proximal linearized alternating minimization method
  to the regularized factorizations.
 \end{abstract}

 \noindent
 {\bf Keywords}: Low-rank matrix recovery; rank regularized factored loss minimization;
 $\ell_{2,0}$-norm and equivalent DC reformulations; KL property of exponent $1/2$

 \section{Introduction}\label{sec1}

 Let $\mathbb{R}^{m\times n}$ be the vector space of all $m\times n$
 real matrices, endowed with the trace inner product $\langle\cdot,\cdot\rangle$
 and its induced Frobenius norm $\|\cdot\|_F$. Low-rank matrix recovery problems
 aim to recover a matrix $M\in\mathbb{R}^{m\times n}$ of rank $r$ with
 $1\le r\ll\min(m,n)$ via some noiseless or noisy observations.
 Let $f\!:\mathbb{R}^{m\times n}\rightarrow\mathbb{R}_+$ denote an appropriate
 smooth empirical loss function. When a tight upper estimation,
 say the positive integer $\kappa$, for the rank of $M$ is available,
 it is natural to model low-rank matrix recovery problems as
 \begin{equation}\label{rank-constr}
  \min_{X\in\mathbb{R}^{m\times n}}\Big\{f(X)\ \ {\rm s.t.}\ \ {\rm rank}(X)\le\kappa\Big\};
 \end{equation}
 if only a coarse estimation, say $\min(m,n)$, is available, it is reasonable to model it as
 \begin{equation}\label{rank-reg}
  \min_{X\in\mathbb{R}^{m\times n}}\Big\{\nu f(X)+{\rm rank}(X)\Big\},
 \end{equation}
 where $\nu>0$ is the regularization parameter. Without loss of generality, we stipulate
 that $m\le n$ and assume that the problems \eqref{rank-constr} and \eqref{rank-reg}
 have a nonempty global optimal solution set, denoted by $\mathcal{X}^*$.
 When the loss function $f$ is specified as
 \begin{equation}\label{least-squares}
  f(X):=\frac{1}{2}\|\mathcal{A}(X)-b\|^2
 \end{equation}
 for a sampling operator $\mathcal{A}\!:\mathbb{R}^{m\times n}\rightarrow\mathbb{R}^p$
 and an observation vector $b\in\mathbb{R}^p$, the above \eqref{rank-constr} and \eqref{rank-reg}
 become the popular rank constrained and rank regularized least squares problem,
 respectively. Rank optimization problems have a host of applications in a variety of
 fields such as system identification and control \cite{Fazel02,FPST13},
 signal and image processing \cite{Haeffele14,ChenChi14}, machine learning
 \cite{RSebro-05,Bach08,Koren092}, statistics \cite{Negahban11,KolLT11},
 finance \cite{Pietersz04}, and quantum tomography \cite{Gross11}, just to name a few
(see also the survey \cite{Davenport16} for some further applications).

 \medskip

 Due to the involved combinatorial property, rank optimization problems
 are generally NP-hard \cite{Fazellog} and it is almost impossible to
 seek their global optima with an algorithm of polynomial-time complexity.
 Over the past decade, many convex relaxation approaches have been developed.
 A popular one is the nuclear norm relaxation approach \cite{Candes09,Recht10}
 that yields a low-rank solution via a single tractable nuclear norm
 optimization problem. Observe that the nuclear norm has a relatively
 weak ability even fails to promoting a low-rank solution in some scenarios
 \cite{Ruslan10,MiaoPS16}. Some researchers have proposed convex relaxation
 approaches by solving a sequential convex program arising from a certain
 nonconvex surrogate of rank minimization problems
 (see, e.g., \cite{Fazellog, LaiMJ13, Mohan12}). Recently, Bi and Pan \cite{BiPan17}
 proposed a multi-stage convex relaxation approach to \eqref{rank-reg} by
 the global exact penalty for its MPEC reformulation, which shows that
 several convex relaxation problems are enough to yield a high-quality
 low-rank solution. Although convex relaxation methods have received well study
 from many fields such as information, computer science, statistics, optimization,
 and so on (see, e.g., \cite{Recht10,Candes11,Negahban11,KolLT11,Cai10,Toh10}),
 the computation of tractable convex optimization problems involved in them are
 very expensive since each iterate requires performing a singular value decomposition
 (SVD), which making them computationally prohibitive in large-scale settings \cite{Hsieh14}.

 \medskip

 To overcome the computational bottleneck of the convex relaxation approach,
 inspired by the work \cite{Burer03}, a common way is to reparameterize the $m\times n$
 matrix variable $X$ as $UV^{\mathbb{T}}$ with $U\!\in\mathbb{R}^{m\times\kappa}$
 and $V\!\in\mathbb{R}^{n\times\kappa}$, and achieve a desirable low-rank solution
 by solving a bi-factored nonconvex problem with less variables which is often regularized
 by a term to account for the ambiguities in the factorization $X=UV^{\mathbb{T}}$.
 Corresponding to the rank constrained problem \eqref{rank-constr},
 one usually solves the following smooth nonconvex optimization
 \begin{equation}\label{balance-rconstr}
   \min_{U\in\mathbb{R}^{m\times \kappa}, V\in\mathbb{R}^{n\times \kappa}}
   \Big\{f(UV^{\mathbb{T}})+\frac{\mu}{4}\|U^{\mathbb{T}}U-V^{\mathbb{T}}V\|_F^2\Big\}
 \end{equation}
 where $\mu>0$ is the regularization parameter. Such a reparametrization
 automatically enforces the low-rank structure and leads to low computational
 cost per iteration. Owing to this, the nonconvex factored approach is widely
 used in large-scale applications such as recommendation systems or
 collaborative filtering \cite{Koren091,Koren092}. Though the bi-factored
 problem is nonconvex, it is amenable to local search heuristics such as
 gradient descent or alternating minimization which can exploit its bilinear
 structure well. In fact, numerical experiments indicate that they tend to
 work well in practice \cite{Ge16,Lee16,SunLuo16}.

 \medskip

 Despite the superior empirical performance of the nonconvex factored approach,
 the understanding of its theoretical guarantees is rather limited in comparison
 with the convex relaxation approach since its analysis is well known to be
 notoriously difficult. In the recent years, some active progress has been made
 for the theory of the nonconvex factored approach. One research line focuses on
 the (regularized) factorizations of rank optimization problems from a local view
 and aims to characterize the growth behavior of objective functions around the set
 of global optimal solutions
 (see, e.g., \cite{Jain13,Park16,SunLuo16,Tu16,Zhang18,Zhao15,Zheng16}). A direct consequence
 of this line is that standard methods such as gradient descent and alternating minimization,
 when initialized with a point in the stated neighborhood, will converge linearly
 to an optimal solution. Another line takes a global view and aims to establish
 the geometric landscape of the factorization models of rank optimization problems,
 especially strict saddle point property (see, e.g., \cite{Park17,Ge16,Ge17,LiG18,Li18,ZhangX18,Zhu181}).
 As a result, suitably modified gradient descent methods can be shown to converge
 globally to an optimal solution, and their convergence rates can be established;
 see Jin et al. \cite{Jin17}.

 \medskip

 We notice that most of the above theoretical results focus on
 the exact-parametrization case $\kappa=r$ of the (regularized) factorizations,
 and consequently are applicable to the rank optimization problems
 \eqref{rank-constr} and \eqref{rank-reg} for which the rank of the optimal
 solutions is known. In fact, the most crucial and difficult part for
 most of rank optimization problems is to achieve the rank of global optima
 or its best estimation. This means that the factored model of the rank
 regularized problem \eqref{rank-reg} with a coarse estimation $\kappa$
 is more practical. Motivated by the efficiency of the nuclear norm relaxation approach,
 there are some works \cite{Rennie05,Cabral13,Li18,Zhang18} centering
 around the factorized nuclear norm surrogate problem
 \begin{equation}\label{prob-Fro}
  \min_{U\in\mathbb{R}^{m\times \kappa}, V\in\mathbb{R}^{n\times \kappa}}
  \Big\{\nu f(UV^{\mathbb{T}})+\frac{1}{2}(\|U\|_F^2+\|V\|_F^2)\Big\}
 \end{equation}
 where the positive integer $\kappa$ is an upper estimation for $r$.
 Among others, Li et al. \cite{Li18} took a global view and proved that
 each critical point of the smooth factored problem \eqref{prob-Fro} either
 corresponds to the global optimum of the nuclear norm surrogate problem or
 is a strict saddle point; while Zhang et al. \cite{Zhang18} took a local view
 and established the KL property of exponent $1/2$ of the objective function
 over the set of global optima but only for the exact-parametrization case
 $\kappa=r$ and the loss $f$ in \eqref{least-squares} with a full sampling operator
 $\mathcal{A}$. As previously mentioned, the nuclear norm has a weak ability
 even fails to promoting a low-rank solution in some cases, which implies that
 the Fro-norm regularized factorization model will carry on this potential insufficiencey.

 \medskip

 Motivated by this, in this work we concentrate on the following factored form of \eqref{rank-reg}:
 \begin{equation}\label{prob-balance}
  \min_{U\in\mathbb{R}^{m\times \kappa},V\in\mathbb{R}^{n\times \kappa}}
  \!\Big\{\Psi(U,V):=\nu f(UV^{\mathbb{T}})+\frac{\mu}{4}\|U^{\mathbb{T}}U\!-\!V^{\mathbb{T}}V\|_F^2
  +\!\frac{1}{2}\big(\|U\|_{2,0}+\|V\|_{2,0}\big)\Big\},
 \end{equation}
 where the positive integer $\kappa$ is an upper estimation for $r$,
 and $\|U\|_{2,0}$ means the $\ell_{2,0}$-norm of $U$, i.e., the number
 of nonzero columns of $U$. As will be shown in Section \ref{sec3.1},
 when the set $\mathcal{X}^*$ contains a matrix with rank no more than $\kappa$,
 the factored problem \eqref{prob-balance} is equivalent to \eqref{rank-reg}
 in the sense that their global optima can be obtained from each other. The balanced term
 $\frac{\mu}{4}\|U^{\mathbb{T}}U\!-\!V^{\mathbb{T}}V\|_F^2$ is introduced
 to remove the ambiguities caused by the bilinear form $UV^{\mathbb{T}}$.
 Consider that the discontinuity of $\ell_{2,0}$-norm may cause some inconvenience
 in designing effective algorithms for solving \eqref{prob-balance}. We also reformulate
 \eqref{prob-balance} as a mathematical program with equilibrium constraint (MPEC)
 by the variational characterization of $\ell_{2,0}$-norm, and derive some equivalent
 DC (difference of convexity) reformulations from the global exact penalty of
 the MPEC under a suitable restriction on the loss $f$. The main contribution
 of this work is, for the least squares loss with $b=\mathcal{A}(M)$, to establish
 the KL property of exponent $1/2$ of the function $\Psi$ and its equivalent
 DC surrogates over the set of their global minimizers under a restricted
 condition number assumption on $\mathcal{A}$.

 \medskip

 For nonconvex and nonsmooth functions, the KL property of exponent $1/2$ over
 the set of critical points is a weak growth behavior to guarantee that many
 first-order algorithms for minimizing them converge to a critical point with a linear rate
 (see, e.g., \cite{Attouch10,Attouch13,Bolte14}). From \cite[Section 4]{Attouch10}
 many classes of functions indeed satisfy the KL property, but generally it is
 not an easy task to verify whether these functions have the KL property of exponent
 $1/2$ or not over the set of critical points. Though some useful rules are provided
 in \cite{LiPong18} to identify the exponent of KL functions, in most cases one still
 needs to analyze them case by case. Though we establish the KL property of
 exponent $1/2$ of the function $\Psi$ and its equivalent DC surrogates only in a subset
 of their critical point set, to the best of our knowledge, this work is the first
 to achieve such a growth behavior for the nonsmooth factored function with
 an over-parametrization case $\kappa\ge r$.

 \section{Notation and preliminaries}\label{Sec2}

 Throughout this paper, $\mathbb{O}^{n}$ represents the set of all
 $n\times n$ orthonormal matrices, $e$ denotes a vector of all ones
 whose dimension is known from the context, and ${\rm Diag}(z)$
 for a vector $z$ means a rectangular diagonal matrix.
 For a given $X\in\mathbb{R}^{m\times n}$, $\sigma(X)\in\mathbb{R}^m$
 denotes the singular value vector of $X$ arranged in a nonincreasing order,
 i.e., $\sigma_1(X)\ge\cdots\ge\sigma_m(X)$,
 $\mathbb{O}^{m,n}(X):=\!\{(P,Q)\in \mathbb{O}^m\times\mathbb{O}^n\ |\ X=P{\rm Diag}(\sigma(X))Q^{\mathbb{T}}\}$,
 $\|X\|$ means the spectral norm of $X$, $X_{j}$ denotes the $j$-th column of $X$,
 and for an index set $J\subseteq\{1,\ldots,n\}$,
 $X_{J}$ represents the matrix consisting of those $X_{j}$ with $j\in J$.
 For a given $\delta>0$, $\mathbb{B}(X,\delta)$ denotes the closed ball on
 the Frobenius norm centered at $X$ of radius $\delta$.
 For a linear operator $\mathcal{B}\!:\mathbb{R}^{m\times n}\rightarrow \mathbb{R}^{p}$,
 $\|\mathcal{B}\|$ means the spectral norm of $\mathcal{B}$.
 For a given integer $k\ge 1$, write
 $\Omega_k:=\big\{X\in\mathbb{R}^{m\times n}\ |\ {\rm rank}(X)\le k\big\}$.
 In the sequel, we denote by $\mathscr{L}$ the family
 of proper lower semicontinuous (lsc) convex functions $\phi\!:\mathbb{R}\to(-\infty,+\infty]$ satisfying
 \begin{equation}\label{phi}
   [0,1]\subseteq {\rm int}({\rm dom}\phi),\ \phi(1)=1,\
   t_{\phi}^*=\mathop{\arg\min}_{t\in[0,1]}\phi(t)\ \ {\rm with}\ \phi(t_{\phi}^*)=0.
  \end{equation}
  For each $\phi\in\!\mathscr{L}$, let $\psi\!:\mathbb{R}\to(-\infty,+\infty]$ be
  the associated proper lsc convex function:
 \begin{equation}\label{psi}
  \psi(t):=\!\left\{\!\begin{array}{cl}
                  \phi(t) &\textrm{if}\ t\in [0,1];\\
                   +\infty & \textrm{otherwise},
                 \end{array}\right.
 \end{equation}
 and denote by $\psi^*$ the conjugate function of $\psi$, i.e.,
 $\psi^*(s)\!:=\sup_{t\in\mathbb{R}}\{st-\psi(t)\}$ for $s\in\mathbb{R}$.
  \subsection{Generalized subdifferentials}\label{sec2.1}

  Next we recall from \cite[Definition 8.3]{RW98} the concepts of
  the regular, limiting and horizon subdifferentials of an extended
  real-valued function at a finite-valued point. For an extended real-valued
  $g\!:\mathbb{R}^n\to(-\infty,+\infty]$, write ${\rm dom}\,g:=\{x\in\mathbb{R}^n\ |\ g(x)<\infty\}$.
  \begin{definition}\label{Gsubdiff-def}
   Consider a function $g\!:\mathbb{R}^n\to[-\infty,+\infty]$ and a point
  $x$ with $g(x)$ finite. The regular subdifferential of $g$ at $x$,
  denoted by $\widehat{\partial}g(x)$, is defined as
  \[
    \widehat{\partial}g(x):=\bigg\{v\in\mathbb{R}^n\ \big|\
    \liminf_{x'\to x\atop x'\ne x}
    \frac{g(x')-g(x)-\langle v,x'-x\rangle}{\|x'-x\|}\ge 0\bigg\};
  \]
  the (limiting) subdifferential of $g$ at $x$, denoted by $\partial g(x)$, is defined as
  \[
    \partial g(x):=\Big\{v\in\mathbb{R}^n\ |\  \exists\,x^k\xrightarrow[g]{}x\
    {\rm and}\ v^k\in\widehat{\partial}g(x^k)\ {\rm with}\ v^k\to v\Big\},
  \]
  and the horizon subdifferential of $g$ at $x$, denoted by $\partial^{\infty}g(x)$, is defined as
  \[
    \partial^{\infty}g(x):=\Big\{v\in\mathbb{R}^n\ |\   \exists\,x^k\xrightarrow[g]{}x\
    {\rm and}\  v^k\in\widehat{\partial}g(x^k)\ {\rm with}\ \lambda_kv^k\to v\ {\rm for\ some}\ \lambda_k\downarrow 0\Big\}.
  \]
 \end{definition}
 \begin{remark}\label{remark-Gsubdiff}
  {\bf(i)} The regular and limit subdifferentials are all closed and satisfy
  $\widehat{\partial}g(x)\subseteq\partial g(x)$,
  and the set $\widehat{\partial}g(x)$ is convex but $\partial g(x)$ is
  generally nonconvex. When $g$ is convex, $\widehat{\partial}g(x)=\partial g(x)$
  and is precisely the subdifferential of $g$ at $x$ in the sense of
  convex analysis. When $f$ is nonconvex, there may be
  a big difference between them. For example,
  for the function $g(z)=-\|z\|$ for $z\in\mathbb{R}^n$,
  we have $\widehat{\partial}g(0)=\emptyset$
  but $\partial g(0)=\{v\in\mathbb{R}^n\ |\ \|v\|=1\}$.

  \medskip
  \noindent
  {\bf(ii)} The point $\overline{x}$ at which $0\in\partial g(\overline{x})$
  (respectively, $0\in\widehat{\partial}g(\overline{x})$) is called a limiting
  (respectively, regular) critical point of $g$. By \cite[Theorem 10.1]{RW98},
  a local minimizer of $g$ is necessarily is a regular critical point,
  and consequently a limit critical point.

  \medskip
  \noindent
  {\bf(iii)} By \cite[Corollary 8.11]{RW98}, $g$ is (subdifferentially) regular
  at $\overline{x}$ if and only if $g$ is locally lsc at $\overline{x}$ with
  $\partial g(\overline{x})=\widehat{\partial}g(\overline{x})$ and
  $\partial^{\infty}g(\overline{x})=[\widehat{\partial}g(\overline{x})]^{\infty}$.
 \end{remark}

  The following lemmas focus on the subdifferential of two composite functions.
 \begin{lemma}\label{subdiff-L2norm1}
  Let $h(z):={\rm sign}(\|z\|)$ for $z\in\mathbb{R}^n$.
  Fix an arbitrary $\overline{z}\in\mathbb{R}^n$. Then, we have
  \[
    \widehat{\partial}h(\overline{z})
    =\partial h(\overline{z})
    =\left\{\begin{array}{cl}
       \{0\}^n & {\rm if}\ \overline{z}\ne 0;\\
       \mathbb{R}^n & {\rm if}\ \overline{z}=0.
       \end{array}\right.
  \]
 \end{lemma}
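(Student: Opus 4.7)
The plan is to start by writing $h$ in piecewise form: because $\|z\|\ge 0$ with equality only at $z=0$, we have $h(z)=0$ when $z=0$ and $h(z)=1$ otherwise. So $h$ is the indicator-type jump function $h=1-\mathbf{1}_{\{0\}}$, taking only the two values $0$ and $1$.

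For the case $\overline{z}\ne 0$, there is a neighborhood of $\overline{z}$ on which $h\equiv 1$, so $h$ is smooth there with zero gradient. Hence $\widehat{\partial}h(\overline{z})=\partial h(\overline{z})=\{0\}^n$, which also follows from Remark 2.1(iii) (local regularity) or just the definitions.

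For the case $\overline{z}=0$, I would plug directly into the definition of the regular subdifferential. For any $v\in\mathbb{R}^n$ and any $z\to 0$ with $z\ne 0$,
\[
  \frac{h(z)-h(0)-\langle v,z\rangle}{\|z\|}
  =\frac{1}{\|z\|}-\frac{\langle v,z\rangle}{\|z\|},
\]
where the first term tends to $+\infty$ while the second is bounded in absolute value by $\|v\|$. Thus the liminf as $z\to 0$ (with $z\ne 0$) is $+\infty\ge 0$ for every $v$, so $\widehat{\partial}h(0)=\mathbb{R}^n$. Combining this with the general inclusion $\widehat{\partial}h(0)\subseteq\partial h(0)\subseteq\mathbb{R}^n$ yields $\partial h(0)=\mathbb{R}^n$ as well.

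The argument is essentially definition-chasing and there is no real obstacle. The only minor subtlety worth flagging explicitly is the $h$-attentive convergence in the definition of $\partial h$: since $h$ takes only values $0$ and $1$, any sequence $z^k\xrightarrow[h]{}0$ must have $h(z^k)=0$ eventually, i.e.\ $z^k=0$ eventually, so the limiting subdifferential at $0$ reduces to $\widehat{\partial}h(0)$ itself. This is an alternative route to the same conclusion $\partial h(0)=\mathbb{R}^n$ without invoking the trivial upper bound.
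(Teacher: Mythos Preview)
Your proof is correct. For $\overline{z}=0$ it coincides with the paper's approach (direct appeal to Definition~\ref{Gsubdiff-def}); for $\overline{z}\ne 0$ you argue via local constancy of $h$, whereas the paper invokes the chain rule \cite[Exercise~10.7]{RW98} for the composition ${\rm sign}\circ\|\cdot\|$---your route is slightly more elementary and avoids the external reference, while the paper's fits its later use of the same chain rule in Lemma~\ref{subdiff-L2norm2}. Your closing remark on $h$-attentive convergence is a correct alternative argument not present in the paper.
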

 \begin{proof}
  When $\overline{z}\ne 0$, since $\widehat{\partial}{\rm sign}(t)|_{t=\|\overline{z}\|}
  =\partial{\rm sign}(t)|_{t=\|\overline{z}\|}=\{0\}$,
  by \cite[Exercise 10.7]{RW98} it follows that $\widehat{\partial}h(\overline{z})
  =\partial h(\overline{z})=\{0\}^n$. When $\overline{z}=0$, by Definition \ref{Gsubdiff-def}
  it is easy to calculate that $\widehat{\partial} h(\overline{z})=\mathbb{R}^n$,
  which means that $\partial h(\overline{z})=\mathbb{R}^n$.
  So, the result holds.
 \end{proof}
 \begin{lemma}\label{subdiff-L2norm2}
  Let $h(z)=\vartheta(\|z\|)$ for $z\in\mathbb{R}^n$
  where $\vartheta\!:\mathbb{R}\to\mathbb{R}$ is a locally Lipschitz function.
  Consider an arbitrary point $\overline{z}$ with $\vartheta$ finite at $\|\overline{z}\|$.
  Then, with $g(z)\equiv\|z\|$,
  \[
   \partial h(\overline{z})=\partial\vartheta(\|\overline{z}\|)\frac{\overline{z}}{\|\overline{z}\|}
   \ \ {\rm if}\ \ \overline{z}\ne 0
   \ \ {\rm and}\ \
   \partial h(\overline{z})\subseteq D^*g(\overline{z})\partial\vartheta(\|\overline{z}\|)\ \
   {\rm if}\ \ \overline{z}= 0
  \]
  where $u\in D^*g(\overline{z})(\tau)$ if and only if
  $(u,\tau)\in\mathcal{K}:=\{(\xi,\omega)\in\mathbb{R}^n\times\mathbb{R}\ |\ \omega\ge \|\xi\|\}$.
 \end{lemma}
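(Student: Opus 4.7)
The plan is to split the statement into the two cases $\overline{z}\ne 0$ and $\overline{z}=0$, since the inner function $g(z)=\|z\|$ is smooth away from the origin and nonsmooth at it.

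For the first case, I would note that $g$ is $C^{\infty}$ on a neighborhood of $\overline{z}\ne 0$ with gradient $\nabla g(\overline{z})=\overline{z}/\|\overline{z}\|$. Since $\vartheta$ is locally Lipschitz at $\|\overline{z}\|$, the classical chain rule for composition with a smooth inner mapping (see \cite[Exercise 10.7]{RW98}) applies and gives the equality
\[
\partial h(\overline{z})=\nabla g(\overline{z})^{\mathbb{T}}\partial\vartheta(\|\overline{z}\|)=\partial\vartheta(\|\overline{z}\|)\frac{\overline{z}}{\|\overline{z}\|},
\]
which is exactly the first assertion. No additional work is required here beyond checking the hypotheses of that chain rule.

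For the second case $\overline{z}=0$, smoothness of $g$ fails, so I would appeal to a chain rule for compositions in which the inner mapping is only locally Lipschitz. Specifically, both $g$ and $\vartheta$ are locally Lipschitz (in particular strictly continuous), and this places us within the scope of \cite[Theorem 10.49]{RW98} (or the equivalent coderivative chain rule for Lipschitz composites), which yields the inclusion
\[
\partial h(0)\subseteq\bigcup_{\tau\in\partial\vartheta(0)}D^{*}g(0)(\tau).
\]
It then remains to identify $D^{*}g(0)(\tau)$ with the set $\{u:(u,\tau)\in\mathcal{K}\}$ described in the statement. To do this I would compute the regular normal cone to $\mathrm{gph}(g)$ at $(0,0)$ directly from Definition~\ref{Gsubdiff-def}: a pair $(u,-\tau)$ is a regular normal iff
\[
\limsup_{z\to 0}\frac{\langle u,z\rangle-\tau\|z\|}{\sqrt{\|z\|^{2}+\|z\|^{2}}}\le 0,
\]
which, by picking $z$ along directions maximizing $\langle u,z\rangle/\|z\|$, is equivalent to $\|u\|\le\tau$ (and in particular $\tau\ge 0$). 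This identifies $\widehat{D}^{*}g(0)(\tau)=\{u:(u,\tau)\in\mathcal{K}\}$, and this is the exactly piece of the coderivative produced by the chain-rule argument above.

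The main technical delicacy, and the place where I expect the most care to be needed, is the second case: one has to match the version of the chain rule being invoked with the specific "coderivative" set $\mathcal{K}$ in the statement, since the full limiting coderivative of the norm at $0$ is strictly larger than $\{(u,\tau):\tau\ge\|u\|\}$ (it also contains the limits of gradients $z/\|z\|$ as $z\to 0$). The point is that those extra limiting normals correspond to ``outward'' directions $\tau<0$ that cannot arise in the composition with $\vartheta$ because $\partial\vartheta(0)$ couples to the $\tau$-component through an epigraphical inequality; restricting to regular normals at $(0,0)$ captures exactly the cone $\mathcal{K}$ used in the statement. Once this identification is made, combining it with the Lipschitz chain rule completes the inclusion and finishes the lemma.
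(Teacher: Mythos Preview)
Your overall strategy is the same as the paper's: split on $\overline{z}\ne 0$ versus $\overline{z}=0$, use \cite[Exercise~10.7]{RW98} for the smooth case, and for $\overline{z}=0$ invoke the Lipschitz chain rule \cite[Theorem~10.49]{RW98} together with a computation of the normal cone to $\mathrm{gph}\,g$ at the origin. Your computation of the \emph{regular} normal cone $\widehat{\mathcal{N}}_{\mathrm{gph}\,g}(0,0)=\mathcal{K}^{\circ}$ matches what the paper obtains.

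The divergence is in the final identification step, and your worry there is well-placed. The paper asserts that $\mathcal{N}_{\mathrm{gph}\,g}(0,0)\subseteq\mathcal{N}_{\mathrm{epi}\,g}(0,0)=\mathcal{K}^{\circ}$ on the grounds that $\mathrm{gph}\,g\subseteq\mathrm{epi}\,g$, and then combines this with the reverse inclusion through $\widehat{\mathcal{N}}$ to conclude $\mathcal{N}_{\mathrm{gph}\,g}(0,0)=\mathcal{K}^{\circ}$. You instead note (correctly) that the limiting coderivative is strictly larger than what $\mathcal{K}$ describes---limits of normals to the smooth part of the cone boundary contribute the extra piece $\{(u,-\tau):\tau=-\|u\|\}$---and you try to argue that these extra directions are irrelevant ``through an epigraphical inequality.'' That argument, as written, is not a proof: Theorem~10.49 gives the inclusion with the \emph{limiting} $D^{*}g$, so you cannot simply restrict to regular normals after the fact without further justification. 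In short, your proposal and the paper's proof take the same route and stumble at the same spot; you see the obstacle more clearly, but neither resolution is fully rigorous for the ``if and only if'' characterization of $D^{*}g(0)$ in the lemma. The chain-rule inclusion $\partial h(0)\subseteq D^{*}g(0)\,\partial\vartheta(0)$ itself is unaffected, which is all that is actually used later in the paper.
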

 \begin{proof}
  Since $\vartheta$ is a locally Lipschitz function, $\partial^{\infty}g(t)=\{0\}$
  for any $t\in\mathbb{R}$. When $\overline{z}\ne 0$, the result follows from \cite[Exercise 10.7]{RW98}.
  When $\overline{z}=0$, by \cite[Theorem 10.49]{RW98} we have
  $\partial h(\overline{z})\subseteq D^*g(\overline{z})\partial\vartheta(\|\overline{z}\|)$,
  where $D^*g(\overline{z})$ is the coderivative of $g$ at $\overline{z}$, defined by
  \begin{equation}\label{Dstarf}
    u\in D^*g(\overline{z})(\tau)\Longleftrightarrow
    (u,-\tau)\in\mathcal{N}_{{\rm gph}g}(\overline{z},\|\overline{z}\|).
  \end{equation}
  We next characterize the limiting normal cone
  $\mathcal{N}_{{\rm gph}g}(\overline{z},\|\overline{z}\|)$.
  Notice that
  \[
    \mathcal{N}_{{\rm gph}g}(\overline{z},\|\overline{z}\|)
    \subseteq \mathcal{N}_{{\rm epi}g}(\overline{z},\|\overline{z}\|)
    =\big[\mathcal{T}_{{\rm epi}g}(\overline{z},\|\overline{z}\|)\big]^{\circ}
    =\mathcal{K}^{\circ},
  \]
  where $[\mathcal{T}_{{\rm epi}g}(\overline{z},\|\overline{z}\|)]^{\circ}$
  is the negative polar cone of $\mathcal{T}_{{\rm epi}g}(\overline{z},\|\overline{z}\|)$,
  the inclusion is due to ${\rm gph}g\subseteq{\rm epi}g$,
  the first equality is due to the convexity of ${\rm epi}g$,
  and the last one is since $\mathcal{T}_{{\rm epi}g}(\overline{z},\|\overline{z}\|)
  ={\rm epi}g'(\overline{z},\cdot)=\mathcal{K}$.
  Now take an arbitrary $(\xi,\omega)\in\mathcal{K}^{\circ}$,
  i.e., $\omega\le-\|\xi\|$. Then, for any ${\rm gph}g\ni(z,t)$ sufficiently
  close to $(\overline{z},\|\overline{z}\|)$,
  $\langle \xi,z\rangle+\omega t =\langle \xi,z\rangle+\omega\|z\|\le 0$. Then,
  \[
    \limsup_{{\rm gph}g\ni(z,t)\to(\overline{z},\|\overline{z}\|)
    \atop (z,t)\ne(\overline{z},\|\overline{z}\|)}\frac{\langle \xi,z\rangle+\omega t}{\sqrt{\|z\|^2+t^2}}\le 0.
  \]
  This shows that $(\xi,\omega)\in\widehat{\mathcal{N}}_{{\rm gph}g}(\overline{z},\|\overline{z}\|)$,
  the Fr\'{e}chet normal cone to ${\rm gph}g$ at $(\overline{z},\|\overline{z}\|)$.
  Since $\widehat{\mathcal{N}}_{{\rm gph}g}(\overline{z},\|\overline{z}\|)
  \subseteq\mathcal{N}_{{\rm gph}g}(\overline{z},\|\overline{z}\|)$,
  it follows that $(\xi,\omega)\in\mathcal{N}_{{\rm gph}g}(\overline{z},\|\overline{z}\|)$.
  By the arbitrariness of $(\xi,\omega)$ in $\mathcal{K}^{\circ}$,
  we have $\mathcal{N}_{{\rm gph}g}(\overline{z},\|\overline{z}\|)=\mathcal{K}^{\circ}$.
  Together with \eqref{Dstarf},  $u\in D^*g(\overline{z})(\tau)$ if and only if
  $(u,\tau)\in\mathcal{K}$. Thus, the desired inclusion follows.
 \end{proof}

 By Remark \ref{remark-Gsubdiff}(iii), $-g$ is not regular since
 $\widehat{\partial}(-g)(0)\ne\partial(-g)(0)$. This means that
 $sg$ for each $s\in\partial\vartheta(0)$ is not regular
 at the origin unless $\partial\vartheta(0)\subseteq\mathbb{R}_{+}$.
 Consequently, the inclusion in Lemma \ref{subdiff-L2norm2} generally
 can not become an equality.
 \subsection{Kurdyka-{\L}ojasiewicz property}\label{sec2.2}

  We recall from \cite{Attouch10} the concept of the KL property of
  an extended real-valued function.
 \begin{definition}\label{KL-Def}
  Let $g\!:\mathbb{R}^n\!\to(-\infty,+\infty]$ be a proper function.
  The function $g$ is said to have the Kurdyka-{\L}ojasiewicz (KL) property
  at $\overline{x}\in{\rm dom}\,\partial g$ if there exist $\eta\in(0,+\infty]$,
  a continuous concave function $\varphi\!:[0,\eta)\to\mathbb{R}_{+}$ satisfying
  the following two conditions
  \begin{itemize}
    \item [(i)] $\varphi(0)=0$ and $\varphi$ is continuously differentiable on $(0,\eta)$;

    \item[(ii)] for all $s\in(0,\eta)$, $\varphi'(s)>0$,
  \end{itemize}
  and a neighborhood $\mathcal{U}$ of $\overline{x}$ such that for all
  \(
    x\in\mathcal{U}\cap\big[g(\overline{x})<g<g(\overline{x})+\eta\big],
  \)
  \[
    \varphi'(g(x)-g(\overline{x})){\rm dist}(0,\partial g(x))\ge 1.
  \]
  If the corresponding $\varphi$ can be chosen as $\varphi(s)=c\sqrt{s}$
  for some $c>0$, then $g$ is said to have the KL property of exponent $1/2$
  at $\overline{x}$. If $g$ has the KL property of exponent $1/2$
  at each point of ${\rm dom}\,\partial g$,
  then $g$ is called a KL function of exponent $1/2$.
 \end{definition}
 \begin{remark}\label{KL-remark}
  By \cite[Lemma 2.1]{Attouch10}, a proper function has the KL property
  of exponent $1/2$ at any noncritical point. Hence, to show that
  it is a KL function of exponent $1/2$, it suffices to check
  whether it has the KL property of exponent $1/2$ at each critical point.
 \end{remark}

 \subsection{Restricted smallest and largest eigenvalues}\label{sec2.3}

  When handling low-rank matrix recovery problems, restricted strong convexity
  and restricted smoothness are common requirement for loss functions
  (see, e.g., \cite{Negahban11,Zhu181,Li18}). For the least squares loss function
  in \eqref{least-squares}, these properties essentially require that the restricted
  smallest and largest eigenvalues of $\mathcal{A}^*\mathcal{A}$ satisfy a certain condition.
  \begin{definition}\label{Def-Rsvalue}
   Let $\mathcal{B}\!:\mathbb{R}^{m\times n}\rightarrow\mathbb{R}^p$ be a given
   linear mapping. The $k$-restricted smallest and largest eigenvalues of
   of $\mathcal{B}^*\mathcal{B}$ are respectively defined as follows:
  \[
    \lambda_{k,\rm min}(\mathcal{B}^*\mathcal{B}):=\min_{X\in\Omega_k,\|X\|_F=1}\|\mathcal{B}(X)\|^2
    \ \ {\rm and}\ \
    \lambda_{k,\rm max}(\mathcal{B}^*\mathcal{B}):=\max_{X\in\Omega_k,\|X\|_F=1}\|\mathcal{B}(X)\|^2,
  \]
  and the ratio $\frac{\lambda_{k,\rm max}(\mathcal{B}^*\mathcal{B})}{\lambda_{k,\rm min}(\mathcal{B}^*\mathcal{B})}$
  is called the $k$-restricted condition number of $\mathcal{B}^*\mathcal{B}$.
  \end{definition}

  Clearly, the least squares loss function \eqref{least-squares}
  has the $k$-restricted strong convexity in \cite{Zhu181,Li18}
  if and only if the $k$-restricted smallest eigenvalue of
  $\mathcal{A}^*\mathcal{A}$ is positive. When the $k$-restricted smallest
  and largest eigenvalues of $\mathcal{A}^*\mathcal{A}$ satisfy
  $\lambda_{k,\rm min}(\mathcal{A}^*\mathcal{A})=1-\delta_{k}$
  and $\lambda_{k,\rm max}(\mathcal{A}^*\mathcal{A})=1+\delta_{k}$ for
  some $\delta_k\in[0,1)$, the sampling operator $\mathcal{A}$ satisfies
  the restricted isometry property (RIP). Thus, we conclude from \cite{Recht10} that
  for many types of random sampling operators, there is a high probability
  for $\mathcal{A}^*\mathcal{A}$ to have a good restricted condition number,
  that is, the value of $\lambda_{r,\rm max}(\mathcal{A}^*\mathcal{A})
  /\lambda_{r,\rm min}(\mathcal{A}^*\mathcal{A})$ is not too large.

  \medskip

  For the least squares loss function \eqref{least-squares}, when $\mathcal{A}^*\mathcal{A}$
  has a positive $r$-restricted smallest eigenvalue, we have the following result
  which improves the result of \cite[Proposition 1]{Li18}.
  \vspace{-0.5cm}
 \begin{lemma}\label{RIPXY}
  For the loss function \eqref{least-squares}, let $\alpha$ and
  $\beta$ be the $r$-restricted smallest and largest eigenvalues of
  $\mathcal{A}^*\mathcal{A}$, respectively. If $\alpha>0$,
  for any $X,Y\in\mathbb{R}^{m\times n}$ with ${\rm rank}([X\ Y])\le r$,
  \[
    \Big|\frac{2}{\alpha+\beta}\langle \mathcal {A}(X),\mathcal {A}(Y)\rangle
     -\langle X,Y\rangle\Big|\leq \frac{\beta-\alpha}{\beta+\alpha}\|X\|_F\|Y\|_F.
  \]
 \end{lemma}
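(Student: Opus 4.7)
The plan is to proceed via the standard polarization identity for $\mathcal{A}^*\mathcal{A}$, combined with a homogeneity/rescaling argument to upgrade a symmetric $\|X\|_F^2+\|Y\|_F^2$ bound into the desired product bound $\|X\|_F\|Y\|_F$.

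First I would observe a rank-propagation fact: if $\mathrm{rank}([X\ Y])\le r$, then the columns of $X+Y$ and of $X-Y$ all lie in the column span of $[X\ Y]$, so $\mathrm{rank}(X\pm Y)\le r$ as well. This is what makes the restricted eigenvalue inequalities in Definition~\ref{Def-Rsvalue} applicable to $X\pm Y$, namely
\[
\alpha\|X\pm Y\|_F^{2}\le\|\mathcal{A}(X\pm Y)\|^{2}\le\beta\|X\pm Y\|_F^{2}.
\]

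Next I would apply the polarization identity
\[
\langle\mathcal{A}(X),\mathcal{A}(Y)\rangle=\tfrac{1}{4}\bigl(\|\mathcal{A}(X+Y)\|^{2}-\|\mathcal{A}(X-Y)\|^{2}\bigr).
\]
Substituting the two-sided bounds above and expanding $\|X\pm Y\|_F^{2}=\|X\|_F^{2}+\|Y\|_F^{2}\pm 2\langle X,Y\rangle$ yields, after simplification,
\[
\Bigl|\langle\mathcal{A}(X),\mathcal{A}(Y)\rangle-\tfrac{\alpha+\beta}{2}\langle X,Y\rangle\Bigr|\le\tfrac{\beta-\alpha}{4}\bigl(\|X\|_F^{2}+\|Y\|_F^{2}\bigr).
\]
Dividing through by $(\alpha+\beta)/2$ gives the inequality with $\tfrac{\beta-\alpha}{2(\beta+\alpha)}(\|X\|_F^{2}+\|Y\|_F^{2})$ on the right, which by AM-GM is weaker (by a factor $2$ in the worst case) than the target bound.

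The main obstacle, and the step where \cite[Proposition 1]{Li18} is strengthened, is converting that sum into a product. I would handle it by a rescaling trick: assume without loss of generality $X,Y\ne 0$ (else the inequality is trivial), and for any $t>0$ note that $(tX, t^{-1}Y)$ has the same rank envelope $\mathrm{rank}([tX\ t^{-1}Y])\le r$, while the quantity $\frac{2}{\alpha+\beta}\langle\mathcal{A}(X),\mathcal{A}(Y)\rangle-\langle X,Y\rangle$ is invariant under the substitution $X\mapsto tX$, $Y\mapsto t^{-1}Y$. Applying the previous inequality to $(tX, t^{-1}Y)$ therefore gives
\[
\Bigl|\tfrac{2}{\alpha+\beta}\langle\mathcal{A}(X),\mathcal{A}(Y)\rangle-\langle X,Y\rangle\Bigr|\le\tfrac{\beta-\alpha}{2(\beta+\alpha)}\bigl(t^{2}\|X\|_F^{2}+t^{-2}\|Y\|_F^{2}\bigr),
\]
and the choice $t^{2}=\|Y\|_F/\|X\|_F$ minimizes the right-hand side to $\tfrac{\beta-\alpha}{\beta+\alpha}\|X\|_F\|Y\|_F$, finishing the proof.
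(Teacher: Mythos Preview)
Your proof is correct and is essentially the same approach as the paper's. The only cosmetic difference is that the paper normalizes $X$ and $Y$ to unit Frobenius norm \emph{before} applying the polarization identity (working with $\overline{X}=X/\|X\|_F$ and $\overline{Y}=Y/\|Y\|_F$), whereas you first derive the sum bound $\tfrac{\beta-\alpha}{2(\beta+\alpha)}(\|X\|_F^2+\|Y\|_F^2)$ and then optimize over the rescaling $(tX,t^{-1}Y)$; your optimal choice $t^2=\|Y\|_F/\|X\|_F$ amounts exactly to the paper's normalization.
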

 \begin{proof}
  Fix an arbitrary $X,Y\in\mathbb{R}^{m\times n}$ with ${\rm rank}([X\ Y])\le r$. If one of $X$ and $Y$
  is the zero matrix, the result is trivial. So, we assume that $X\ne 0$
  and $Y\ne 0$. Write $\overline{X}:=\frac{X}{\|X\|_F}$ and $\overline{Y}:=\frac{Y}{\|Y\|_F}$.
  Notice that ${\rm rank}([\overline{X}\ \ \overline{Y}])\le r$
  and ${\rm rank}(\overline{X}\pm\overline{Y})\leq {\rm rank}([\overline{X}\ \ \overline{Y}])$.
  Then, we have
  \begin{subequations}
  \begin{align*}
   \alpha\|\overline{X}+\overline{Y}\|_F^2
   \le\big\|\mathcal {A}\big(\overline{X}+\overline{Y}\big)\big\|^2
   \le\beta\big\|\overline{X}+\overline{Y}\|_F^2,\\
   \alpha\|\overline{X}-\overline{Y}\|_F^2
   \le\big\|\mathcal {A}\big(\overline{X}-\overline{Y}\big)\big\|^2
   \le\beta\big\|\overline{X}+\overline{Y}\|_F^2.
  \end{align*}
  \end{subequations}
  Together with $4|\big\langle\mathcal{A}(\overline{X}),\mathcal{A}(\overline{Y})\big\rangle|
  =\big|\|\mathcal{A}(\overline{X}+\overline{Y})\|^2-\|\mathcal{A}(\overline{X}-\overline{Y})\|^2\big|$,
  if follows that
 \begin{subequations}
  \begin{align*}
  4\big\langle\mathcal{A}(\overline{X}),\mathcal {A}(\overline{Y})\big\rangle
  \le\beta\big\|\overline{X}+\overline{Y}\|_F^2-\alpha\|\overline{X}-\overline{Y}\|_F^2
  =2(\beta-\alpha)+2(\beta+\alpha)\big\langle \overline{X},\overline{Y}\big\rangle,\\
  -4\big\langle\mathcal {A}(\overline{X}),\mathcal {A}(\overline{Y})\big\rangle
  \le\beta\big\|\overline{X}-\overline{Y}\|_F^2-\alpha\big\|\overline{X}+\overline{Y}\|_F^2
  =2(\beta-\alpha)-2(\beta+\alpha)\big\langle \overline{X},\overline{Y}\big\rangle.\nonumber
 \end{align*}
 \end{subequations}
  The last two inequalities imply the desired inequality.
  The proof is then completed.
 \end{proof}

  To close this section, we characterize the rank function
  in terms of the $\ell_{2,0}$-norm.
  \begin{lemma}\label{rank-char}
   Given a matrix $X\in\mathbb{R}^{m\times n}$. If ${\rm rank}(X)\le\kappa$
   for an integer $\kappa\ge 1$, then
   \begin{equation}\label{rank-equa}
    {\rm rank}(X)=\min_{R\in\mathbb{R}^{m\times\kappa},L\in\mathbb{R}^{n\times\kappa}}
    \Big\{\frac{1}{2}\big(\|R\|_{2,0}+\|L\|_{2,0}\big)\!: X=RL^{\mathbb{T}}\Big\}.
   \end{equation}
 \end{lemma}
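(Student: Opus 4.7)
The plan is to prove both inequalities separately. Let $r:=\mathrm{rank}(X)$ and denote by $v(X)$ the right-hand side of \eqref{rank-equa}.

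For the inequality $r\le v(X)$, I would take any feasible pair $(R,L)$ with $X=RL^{\mathbb{T}}$ and expand $X=\sum_{j=1}^{\kappa}R_{j}L_{j}^{\mathbb{T}}$. Only indices $j$ for which both $R_{j}\ne 0$ and $L_{j}\ne 0$ contribute, so letting $J_{R}:=\{j\!:R_{j}\ne 0\}$ and $J_{L}:=\{j\!:L_{j}\ne 0\}$ and $J:=J_{R}\cap J_{L}$, I get $X=R_{J}L_{J}^{\mathbb{T}}$ and hence $\mathrm{rank}(X)\le|J|\le\min(|J_{R}|,|J_{L}|)=\min(\|R\|_{2,0},\|L\|_{2,0})\le\tfrac{1}{2}(\|R\|_{2,0}+\|L\|_{2,0})$. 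Taking the infimum over feasible $(R,L)$ yields $r\le v(X)$.

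For the reverse inequality $v(X)\le r$, I would exhibit a feasible pair attaining the value $r$. Using the reduced SVD $X=\overline{U}\,\mathrm{Diag}(\overline{\sigma})\,\overline{V}^{\mathbb{T}}$ with $\overline{U}\in\mathbb{R}^{m\times r}$, $\overline{V}\in\mathbb{R}^{n\times r}$ and $\overline{\sigma}\in\mathbb{R}^{r}_{++}$, I would set
\[
 R:=\bigl[\overline{U}\,\mathrm{Diag}(\overline{\sigma})^{1/2}\ \ 0_{m\times(\kappa-r)}\bigr],\qquad
 L:=\bigl[\overline{V}\,\mathrm{Diag}(\overline{\sigma})^{1/2}\ \ 0_{n\times(\kappa-r)}\bigr],
\]
which is permissible because $r\le\kappa$. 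Then $RL^{\mathbb{T}}=X$, and since the columns of $\overline{U}$ and $\overline{V}$ are orthonormal and the entries of $\overline{\sigma}^{1/2}$ are strictly positive, each of the first $r$ columns of $R$ and $L$ is nonzero while the remaining columns are zero by construction, giving $\|R\|_{2,0}=\|L\|_{2,0}=r$ and hence $\tfrac{1}{2}(\|R\|_{2,0}+\|L\|_{2,0})=r$. This proves $v(X)\le r$ and also that the infimum in \eqref{rank-equa} is attained, so it is a minimum.

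Neither direction is genuinely difficult; the only subtlety worth flagging is to make sure the upper-bound argument uses $R_{J}L_{J}^{\mathbb{T}}$ (indices where \emph{both} columns are nonzero) rather than $\min(\|R\|_{2,0},\|L\|_{2,0})$ directly, and to justify that the padded SVD factors really have exactly $r$ nonzero columns, which relies on the positivity of the nonzero singular values and the linear independence of the columns of $\overline{U},\overline{V}$.
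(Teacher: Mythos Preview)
Your proof is correct and follows essentially the same two-step approach as the paper: bounding $\mathrm{rank}(X)$ above by $\tfrac{1}{2}(\|R\|_{2,0}+\|L\|_{2,0})$ via the rank-one expansion $X=\sum_j R_jL_j^{\mathbb{T}}$, and then exhibiting the SVD-based factor pair with exactly $r$ nonzero columns each. The only cosmetic difference is that the paper writes the factors using the first $\kappa$ columns of a full SVD (so the columns beyond $r$ automatically vanish because $\sigma_j(X)=0$), whereas you use the reduced SVD padded with explicit zero blocks; the constructions are identical.
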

 \begin{proof}
  Take an arbitrary feasible point
  $(R,L)\in\mathbb{R}^{m\times\kappa}\times\mathbb{R}^{n\times\kappa}$ of \eqref{rank-equa}.
  Notice that $X=RL^{\mathbb{T}}={\textstyle\sum_{j=1}^\kappa}R_{j}L_{j}^{\mathbb{T}}$
  and there are at most $\min(\|R\|_{2,0},\|L\|_{2,0})$ nonzero terms.
  Hence, ${\rm rank}(X)\le\min(\|R\|_{2,0},\|L\|_{2,0})$ and
  ${\rm rank}(X)\le \frac{1}{2}\big(\|R\|_{2,0}+\|L\|_{2,0}\big)$.
  Since $(R,L)$ is an arbitrary feasible point, this shows that
  ${\rm rank}(X)$ is a lower bound for the objective function of \eqref{rank-equa}
  over its feasible set. Also, by taking $(U,V)\in\mathbb{O}^{m,n}(X)$
  and setting $\overline{R}=\big[\sqrt{\sigma_1(X)}U_{1}\ \cdots\ \sqrt{\sigma_\kappa(X)}U_{\kappa}\big]$
  and $\overline{L}=\big[\sqrt{\sigma_1(X)}V_{1}\ \cdots\ \sqrt{\sigma_\kappa(X)}V_{\kappa}\big]$,
  we have
  \[
    \|\overline{R}\|_{2,0}=\|\overline{L}\|_{2,0}={\rm rank}(X)\ \ {\rm and}\ \
    X=\overline{R}\,\overline{L}^{\mathbb{T}}.
  \]
  This shows that the optimal value of \eqref{rank-equa} equals ${\rm rank}(X)$.
  The desired result holds.
 \end{proof}
 \section{Factored reformulations}\label{sec3}

  We provide several factored reformulations of the rank regularized
  problem \eqref{rank-reg} by means of the $\ell_{2,0}$-norm of matrices and
  its variational characterization. Recall the definition of the function family
  $\mathscr{L}$. With an arbitrary $\phi\in\!\mathscr{L}$, it is easy to check that
  for any $z\in\mathbb{R}^{\kappa}$,
 \begin{equation*}
  \|z\|_0=\min_{w\in \mathbb{R}^{\kappa}}\Big\{{\textstyle\sum_{i=1}^{\kappa}}\phi(w_i):
  \ 0\leq w\leq e,\ \langle e-w, |z|\rangle =0\Big\}.
  \end{equation*}
  Consequently, for any $Z\in\mathbb{R}^{m\times n}$,
  with $\mathcal{G}(Z):=(\|Z_{1}\|,\|Z_{2}\|,\ldots,\|Z_{n}\|)^{\mathbb{T}}$
  it holds that
  \begin{equation}\label{group-20}
  \|Z\|_{2,0}=\min_{w\in \mathbb{R}^{n}}\Big\{{\textstyle\sum_{i=1}^{n}}\phi(w_i):
   \ 0\leq w\leq e,\,\langle e-w,\mathcal{G}(Z)\rangle =0\Big\}.
  \end{equation}
  This provides a variational characterization for the $\ell_{2,0}$-norm of
  matrices. Such a characterization was exploited in \cite{BiPan18} to design a convex
  relaxation approach to group sparsity.
 \subsection{$\ell_{2,0}$-norm regularized factorization}\label{sec3.1}

  We first argue that the rank regularized problem \eqref{rank-reg} can be
  reformulated as an equivalent $\ell_{2,0}$-norm regularized factorization model.
  This is implied by the following lemma.
  \begin{lemma}\label{lemma-zreg}
   If $X^*$ is a global optimal solution of rank $r$ for the problem \eqref{rank-reg},
   then $(R^*,L^*)$ with $R^*=[\sqrt{\sigma_1(X^*)}U_{1}^*\ \cdots\,\sqrt{\sigma_\kappa(X^*)}U_{\kappa}^*]$
   and $L^*=[\sqrt{\sigma_1(X^*)}V_{1}^*\ \cdots\ \sqrt{\sigma_\kappa(X^*)}V_{\kappa}^*]$
   for $(U^*,V^*)\!\in\!\mathbb{O}^{m,n}(X^*)$ is globally optimal to
   the following problem with $\kappa\ge r$
   \begin{equation}\label{prob-zreg}
    \min_{U\in\mathbb{R}^{m\times\kappa},V\in\mathbb{R}^{n\times\kappa}}
    \Big\{\nu f(UV^{\mathbb{T}})+\frac{1}{2}\big(\|U\|_{2,0}+\|V\|_{2,0}\big)\Big\}.
  \end{equation}
  Conversely, if $\mathcal{X}^*\cap\Omega_{\kappa}\ne\emptyset$
  and $(\overline{U},\overline{V})$ is a global optimal solution of \eqref{prob-zreg},
  then $\overline{X}=\overline{U}\,\overline{V}^{\mathbb{T}}$ is globally optimal
  to the problem \eqref{rank-reg}.
  \end{lemma}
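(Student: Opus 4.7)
The plan is to use Lemma \ref{rank-char} as a bridge between the rank functional and the symmetric $\ell_{2,0}$ surrogate $\tfrac{1}{2}(\|\cdot\|_{2,0}+\|\cdot\|_{2,0})$, and then chain two inequalities against the global optimum of \eqref{rank-reg}. Both directions reduce to carefully comparing objective values at carefully constructed competitors.

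For the forward direction, I would first check that $(R^*,L^*)$ is a valid element of $\mathbb{R}^{m\times\kappa}\times\mathbb{R}^{n\times\kappa}$ with $R^*(L^*)^{\mathbb{T}}=X^*$: because $\kappa\ge r={\rm rank}(X^*)$, the columns of $R^*$ and $L^*$ indexed by $j>r$ vanish (since $\sigma_j(X^*)=0$ there), so $\|R^*\|_{2,0}=\|L^*\|_{2,0}=r$, and the objective of \eqref{prob-zreg} at $(R^*,L^*)$ equals $\nu f(X^*)+{\rm rank}(X^*)$, i.e.\ the optimal value of \eqref{rank-reg}. For an arbitrary competitor $(U,V)$, expanding $UV^{\mathbb{T}}=\sum_{j=1}^{\kappa}U_jV_j^{\mathbb{T}}$ and discarding the zero outer products yields ${\rm rank}(UV^{\mathbb{T}})\le\min(\|U\|_{2,0},\|V\|_{2,0})\le\tfrac{1}{2}(\|U\|_{2,0}+\|V\|_{2,0})$, which is exactly the bound already established inside the proof of Lemma \ref{rank-char}. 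Combining this with the global optimality of $X^*$ for \eqref{rank-reg} gives
\[
\nu f(UV^{\mathbb{T}})+\tfrac{1}{2}(\|U\|_{2,0}+\|V\|_{2,0})\ge \nu f(UV^{\mathbb{T}})+{\rm rank}(UV^{\mathbb{T}})\ge \nu f(X^*)+{\rm rank}(X^*),
\]
so $(R^*,L^*)$ is globally optimal for \eqref{prob-zreg}.

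For the converse, the hypothesis $\mathcal{X}^*\cap\Omega_{\kappa}\ne\emptyset$ lets me fix some $X^\sharp\in\mathcal{X}^*$ with ${\rm rank}(X^\sharp)\le\kappa$. Applying the construction of the forward part to $X^\sharp$ produces a feasible pair $(R^\sharp,L^\sharp)$ for \eqref{prob-zreg} whose objective value is $\nu f(X^\sharp)+{\rm rank}(X^\sharp)$. The global optimality of $(\overline{U},\overline{V})$ then yields
\[
\nu f(\overline{U}\,\overline{V}^{\mathbb{T}})+\tfrac{1}{2}(\|\overline{U}\|_{2,0}+\|\overline{V}\|_{2,0})\le \nu f(X^\sharp)+{\rm rank}(X^\sharp),
\]
and the inequality ${\rm rank}(\overline{U}\,\overline{V}^{\mathbb{T}})\le\tfrac{1}{2}(\|\overline{U}\|_{2,0}+\|\overline{V}\|_{2,0})$ already derived in the first paragraph forces $\nu f(\overline{X})+{\rm rank}(\overline{X})\le\nu f(X^\sharp)+{\rm rank}(X^\sharp)$. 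Since $X^\sharp\in\mathcal{X}^*$, equality must hold throughout, so $\overline{X}=\overline{U}\,\overline{V}^{\mathbb{T}}\in\mathcal{X}^*$. The only delicate point, and the reason the hypothesis $\mathcal{X}^*\cap\Omega_{\kappa}\ne\emptyset$ is essential for the converse, is that without it one cannot produce a factored competitor with only $\kappa$ columns whose objective matches the optimal value of \eqref{rank-reg}, and only an upper bound would survive; beyond this the argument is routine bookkeeping around Lemma \ref{rank-char}, so I do not anticipate any further obstacle.
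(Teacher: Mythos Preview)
Your proposal is correct and follows essentially the same approach as the paper's proof: both directions pivot on Lemma \ref{rank-char} to bound $\mathrm{rank}(UV^{\mathbb{T}})\le\tfrac{1}{2}(\|U\|_{2,0}+\|V\|_{2,0})$, construct the SVD-based factorization $(R^*,L^*)$ (respectively $(R^\sharp,L^\sharp)$) to match the optimal value of \eqref{rank-reg}, and then chain the two inequalities. Your write-up is slightly more explicit about why the columns $j>r$ vanish, but the argument is otherwise identical.
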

 \begin{proof}
  Fix an arbitrary $(U,V)\in\mathbb{R}^{m\times\kappa}\times\mathbb{R}^{n\times\kappa}$
  and write $X=UV^{\mathbb{T}}$. By Lemma \ref{rank-char},
  $\|U\|_{2,0}+\|V\|_{2,0}\ge 2{\rm rank}(X)$, which along with
  the global optimality of $X^*$ implies that
  \begin{align*}
   \nu f(UV^{\mathbb{T}})+\frac{1}{2}\big(\|U\|_{2,0}+\|V\|_{2,0}\big)
   \ge \nu f(X)+{\rm rank}(X)&\ge \nu f(X^*)+{\rm rank}(X^*).
  \end{align*}
  Notice that $R^*L^*{^{\mathbb{T}}}=X^*$ and $2{\rm rank}(X^*)=\|R^*\|_{2,0}+\|L^*\|_{2,0}$.
  From the last inequality,
  \[
    \nu f(UV^{\mathbb{T}})+\frac{1}{2}\big(\|U\|_{2,0}+\|V\|_{2,0}\big)
   \ge\nu f(R^*L^*{^{\mathbb{T}}})+\frac{1}{2}\big(\|R^*\|_{2,0}+\|L^*\|_{2,0}\big).
  \]
  By the arbitrariness of $(U,V)$, this shows that $(R^*,L^*)$ is globally optimal to \eqref{prob-zreg}.

  \medskip

  Conversely, let $X^*$ be an arbitrary point from $\mathcal{X}^*\cap\Omega_{\kappa}$
  and let $(U^*,V^*)\in\mathbb{O}^{m,n}(X^*)$.
  Write $R=\big[\sqrt{\sigma_1(X^*)}U_{1}^*\ \cdots\ \sqrt{\sigma_{\kappa}(X^*)}U_{\kappa}^*\big]$
  and $L=\big[\sqrt{\sigma_1(X^*)}V_{1}^*\ \cdots\ \sqrt{\sigma_{\kappa}(X^*)}V_{\kappa}^*\big]$.
  Then, it holds that $\|R\|_{2,0}+\|L\|_{2,0}=2{\rm rank}(X^*)$. Consequently, we have
  \begin{align*}
   \nu f(X^*)+{\rm rank}(X^*)
   &=\nu f(RL^{\mathbb{T}})+\frac{1}{2}\big(\|R\|_{2,0}+\|L\|_{2,0}\big)\\
   &\ge \nu f(\overline{U}\overline{V}^{\mathbb{T}})+\frac{1}{2}\big(\|\overline{U}\|_{2,0}+\|\overline{V}\|_{2,0}\big)\\
   &\ge \nu f(\overline{X})+{\rm rank}(\overline{X})\ \ {\rm with}\ \overline{X}=\overline{U}\overline{V}^{\mathbb{T}},
  \end{align*}
  where the last inequality is by Lemma \ref{rank-char}.
  So, $\overline{U}\overline{V}^{\mathbb{T}}$ is globally optimal to \eqref{rank-reg}.
 \end{proof}

  Lemma \ref{lemma-zreg} shows that if an upper bound $\kappa$ is available
  for a low-rank global optimal solution of \eqref{rank-reg}, seeking such a
  low-rank global optimal solution is equivalent to finding a global optimal
  solution of the $\ell_{2,0}$-norm regularized factorization model \eqref{prob-zreg}.
  Thus, to achieve a low-rank global optimal solution of \eqref{rank-reg} with balanced factors,
  i.e., $\overline{X}=\overline{U}\overline{V}^{\mathbb{T}}$
  with $\overline{U}^{\mathbb{T}}\overline{U}=\overline{V}^{\mathbb{T}}\overline{V}$,
  one may solve the $\ell_{2,0}$-norm regularized factorization model \eqref{prob-balance}.
 The following lemma builds a bridge for the global optimal solution set
 of \eqref{prob-balance} and \eqref{rank-reg}.
 \begin{lemma}\label{lemma-balance}
  Suppose $\mathcal{X}^*\cap\Omega_{\kappa}\ne\emptyset$.
  Then, the optimal solution set of \eqref{prob-balance} has the form
  \[
  \mathcal{W}^*\!:=\left\{(\overline{U},\overline{V})\ |\ \overline{U}\overline{V}^{\mathbb{T}}\in\mathcal{X}^*\cap\Omega_{\kappa},\, \overline{U}^{\mathbb{T}}\overline{U}=\overline{V}^{\mathbb{T}}\overline{V},\, \|\overline{U}\|_{2,0}=\|\overline{V}\|_{2,0}={\rm rank}(\overline{U}\overline{V}^{\mathbb{T}})\right\}.
  \]
 \end{lemma}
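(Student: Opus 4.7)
The plan is to compute the optimal value of \eqref{prob-balance} explicitly and then characterize the equality cases. First I would pick any $X^{*}\in\mathcal{X}^{*}\cap\Omega_{\kappa}$ together with $(U^{*},V^{*})\in\mathbb{O}^{m,n}(X^{*})$, and form the balanced factorization $(R^{*},L^{*})$ from the top-$\kappa$ scaled singular vectors of $X^{*}$ exactly as in Lemma \ref{lemma-zreg}. Since the first $\kappa$ columns of $U^{*}$ and $V^{*}$ are orthonormal, we get $R^{*\mathbb{T}}R^{*}=L^{*\mathbb{T}}L^{*}=\mathrm{Diag}(\sigma_{1}(X^{*}),\ldots,\sigma_{\kappa}(X^{*}))$, so the balanced term vanishes, and $\|R^{*}\|_{2,0}=\|L^{*}\|_{2,0}=\mathrm{rank}(X^{*})$. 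Hence
\[
\Psi(R^{*},L^{*})=\nu f(X^{*})+\mathrm{rank}(X^{*}),
\]
which supplies the upper bound on the infimum.

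For the matching lower bound, for any $(U,V)\in\mathbb{R}^{m\times\kappa}\times\mathbb{R}^{n\times\kappa}$ I would chain three elementary inequalities:
\begin{align*}
\Psi(U,V)&\ge \nu f(UV^{\mathbb{T}})+\tfrac{1}{2}\big(\|U\|_{2,0}+\|V\|_{2,0}\big)\\
&\ge \nu f(UV^{\mathbb{T}})+\mathrm{rank}(UV^{\mathbb{T}})\\
&\ge \nu f(X^{*})+\mathrm{rank}(X^{*}),
\end{align*}
where the first step drops the nonnegative balanced term, the second applies Lemma \ref{rank-char} to $UV^{\mathbb{T}}\in\Omega_{\kappa}$, and the third uses global optimality of $X^{*}$ for \eqref{rank-reg}. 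Thus the optimal value of \eqref{prob-balance} equals $\nu f(X^{*})+\mathrm{rank}(X^{*})$, and every $(\overline{U},\overline{V})\in\mathcal{W}^{*}$ attains equality in all three lines by the definition of $\mathcal{W}^{*}$; this gives the inclusion $\mathcal{W}^{*}\subseteq\arg\min\Psi$.

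For the reverse inclusion, I would take $(\overline{U},\overline{V})$ globally optimal so that the three inequalities above collapse to equalities. Equality in the first forces $\overline{U}^{\mathbb{T}}\overline{U}=\overline{V}^{\mathbb{T}}\overline{V}$; equality in the third forces $\overline{U}\,\overline{V}^{\mathbb{T}}\in\mathcal{X}^{*}$, and since $\overline{U}$ has only $\kappa$ columns we also have $\overline{U}\,\overline{V}^{\mathbb{T}}\in\Omega_{\kappa}$; equality in the second gives $\|\overline{U}\|_{2,0}+\|\overline{V}\|_{2,0}=2\,\mathrm{rank}(\overline{U}\,\overline{V}^{\mathbb{T}})$. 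The one slightly delicate step, and the main technical point, is splitting this summed equality into the separate identities $\|\overline{U}\|_{2,0}=\|\overline{V}\|_{2,0}=\mathrm{rank}(\overline{U}\,\overline{V}^{\mathbb{T}})$; this follows from the per-factor bound $\mathrm{rank}(\overline{U}\,\overline{V}^{\mathbb{T}})\le\min(\|\overline{U}\|_{2,0},\|\overline{V}\|_{2,0})$ already derived inside the proof of Lemma \ref{rank-char}. With that, $(\overline{U},\overline{V})\in\mathcal{W}^{*}$ and the characterization is complete.
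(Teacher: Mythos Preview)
Your proof is correct and follows essentially the same route as the paper: both establish the lower bound $\Psi(U,V)\ge \nu f(X^*)+\mathrm{rank}(X^*)$ via the three-inequality chain (drop the balanced term, apply Lemma~\ref{rank-char}, use optimality of $X^*$), then show $\mathcal{W}^*$ attains this value, and finally read off the defining conditions of $\mathcal{W}^*$ from the equality cases. The only cosmetic difference is in the last step: you split $\|\overline{U}\|_{2,0}+\|\overline{V}\|_{2,0}=2\,\mathrm{rank}(\overline{U}\,\overline{V}^{\mathbb{T}})$ using the per-factor bound $\mathrm{rank}(\overline{U}\,\overline{V}^{\mathbb{T}})\le\min(\|\overline{U}\|_{2,0},\|\overline{V}\|_{2,0})$, whereas the paper could equally (and perhaps intends to) use that $\overline{U}^{\mathbb{T}}\overline{U}=\overline{V}^{\mathbb{T}}\overline{V}$ already forces $\|\overline{U}\|_{2,0}=\|\overline{V}\|_{2,0}$ (nonzero columns coincide since $\|\overline{U}_j\|^2=\|\overline{V}_j\|^2$); either justification works.
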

 \begin{proof}
  Take an arbitrary $X^*\!\in\mathcal{X}^*\cap\Omega_{\kappa}$. By Lemma \ref{rank-char},
  for any $(U,V)\in\mathbb{R}^{m\times \kappa}\times\mathbb{R}^{n\times \kappa}$,
  \begin{align}
   &\nu f(UV^{\mathbb{T}})+\frac{\mu}{4}\|U^{\mathbb{T}}U-V^{\mathbb{T}}V\|_F^2
    +\frac{1}{2}(\|U\|_{2,0}+\|V\|_{2,0})\nonumber\\
   &\ge f(UV^{\mathbb{T}})+\frac{1}{2}(\|U\|_{2,0}+\|V\|_{2,0})\nonumber\\
   &\geq \nu f(UV^{\mathbb{T}})+{\rm rank}(UV^{\mathbb{T}})\ge \nu f(X^*)+{\rm rank}(X^*).
  \end{align}
  Moreover, when $(U,V)=(\overline{U},\overline{V})$ for an arbitrary
  $(\overline{U},\overline{V})$ from $\mathcal {W}^*$, it holds that
  \begin{align*}
    &\nu f(\overline{U}\overline{V}^{\mathbb{T}})
    +\frac{\mu}{4}\|\overline{U}^{\mathbb{T}}\overline{U}-\overline{V}^{\mathbb{T}}\overline{V}\|_F^2
    +\frac{1}{2}(\|\overline{U}\|_{2,0}+\|\overline{V}\|_{2,0})\\
    &=\nu f(\overline{U}\overline{V}^{\mathbb{T}})+{\rm rank}(\overline{U}\,\overline{V}^{\mathbb{T}})
    =\nu f(X^*)+{\rm rank}(X^*)
  \end{align*}
  where the last equality is due to $\overline{U}\,\overline{V}^{\mathbb{T}}\in\mathcal{X}^*$.
  The last two equations show that the problems \eqref{prob-balance} and \eqref{rank-reg}
  have the same optimal value, and by the arbitrariness of $(\overline{U},\overline{V})$
  in $\mathcal{W}^*$, we conclude that $\mathcal {W}^*$ is included in the set of global
  optimal solutions to \eqref{prob-balance}. Thus, it suffices to argue that the converse
  inclusion holds. For this purpose, let $(\overline{U},\overline{V})$ be an arbitrary
  global optimal solution to \eqref{prob-balance}. Then, by using the fact that
  the problems \eqref{prob-balance} and \eqref{rank-reg} have the same optimal value
  and Lemma \ref{rank-char},
  it follows that
  \begin{align}\label{temp-ineq31}
   \nu f(X^*)+{\rm rank}(X^*)
   &=\nu f(\overline{U}\,\overline{V}^{\mathbb{T}})
    +\frac{\mu}{4}\|\overline{U}^{\mathbb{T}}\overline{U}-\overline{V}^{\mathbb{T}}\overline{V}\|_F^2
    +\frac{1}{2}(\|\overline{U}\|_{2,0}+\|\overline{V}\|_{2,0})\nonumber\\
    &\ge\nu f(\overline{U}\,\overline{V}^{\mathbb{T}})+{\rm rank}(\overline{U}\,\overline{V}^{\mathbb{T}})
    \ge\nu f(X^*)+{\rm rank}(X^*).
  \end{align}
  This implies that $\overline{U}\,\overline{V}^{\mathbb{T}}\in\mathcal{X}^*$
  and the inequalities in \eqref{temp-ineq31} become the equalities. Then,
  \[
   \nu f(\overline{U}\,\overline{V}^{\mathbb{T}})
    +\frac{\mu}{4}\|\overline{U}^{\mathbb{T}}\overline{U}-\overline{V}^{\mathbb{T}}\overline{V}\|_F^2
    +\frac{1}{2}(\|\overline{U}\|_{2,0}+\|\overline{V}\|_{2,0})
  =\nu f(\overline{U}\,\overline{V}^{\mathbb{T}})+{\rm rank}(\overline{U}\,\overline{V}^{\mathbb{T}}).
  \]
  Along with $\frac{1}{2}(\|\overline{U}\|_{2,0}+\|\overline{V}\|_{2,0})\ge{\rm rank}(\overline{U}\,\overline{V}^{\mathbb{T}})$
  by Lemma \ref{rank-char}, we deduce that
  $\overline{U}^{\mathbb{T}}\overline{U}=\!\overline{V}^{\mathbb{T}}\overline{V}$
  and $\frac{1}{2}(\|\overline{U}\|_{2,0}+\|\overline{V}\|_{2,0})
  =\!{\rm rank}(\overline{U}\,\overline{V}^{\mathbb{T}})$, which implies that
  $\|\overline{U}\|_{2,0}=\|\overline{V}\|_{2,0}={\rm rank}(\overline{U}\,\overline{V}^{\mathbb{T}})$.
  That is, $(\overline{U},\overline{V})\in\mathcal{W}^*$. Hence,
  the desired converse inclusion holds.
 \end{proof}
 \begin{remark}\label{remark31}
  Lemma \ref{lemma-zreg} and \ref{lemma-balance} show that
  a global optimal solution of \eqref{rank-reg} can be obtained from
  that of \eqref{prob-zreg} or \eqref{prob-balance} when
  $\mathcal{X}^*\cap\Omega_{\kappa}\ne\emptyset$.
  It is easy to verify that every global optimal solution $(\overline{U},\overline{V})$
  of \eqref{prob-zreg} with $\overline{U}^{\mathbb{T}}\overline{U}=\overline{V}^{\mathbb{T}}\overline{V}$
  is necessarily a global optimal solution of \eqref{prob-balance},
  and when $\mathcal{X}^*\cap\Omega_{\kappa}\ne\emptyset$,
  by Lemma \ref{lemma-balance} and the first part of Lemma \ref{lemma-zreg},
  one may construct a global optimal solution of \eqref{prob-zreg} from every
  global optimal solution of \eqref{prob-balance}.
 \end{remark}

  Lemma \ref{lemma-balance} implies that if the set $\mathcal{X}^*\cap\Omega_{\kappa}$
  can be characterized, one may achieve the global optimal solution set of \eqref{prob-balance}.
  The following proposition states that for the function $f$ specified as in \eqref{least-squares}
  with $b=\mathcal{A}(M)$, the set $\mathcal{X}^*\cap\Omega_k$ can be characterized
  under a suitable condition for the restricted smallest eigenvalue,
  and so is the set $\mathcal{W}^*$.
  \begin{proposition}\label{MW-set}
  Suppose that the function $f$ is given by \eqref{least-squares} with $b=\mathcal{A}(M)$
  for a matrix $M$ of rank $r$, and $\mathcal{X}^*\cap\Omega_{\kappa}\ne\emptyset$.
  If the $2r$-restricted smallest eigenvalue $\alpha$ of the linear operator
  $\mathcal{A}^*\mathcal{A}$ satisfies $\alpha>\frac{2}{\nu\sigma_r^2(M)}$,
  then $\mathcal{X}^*\cap\Omega_k=\{M\}$, and consequently
  \[
    \mathcal{W}^*:=\left\{(\overline{U},\overline{V})\ |\ \overline{U}\,\overline{V}^{\mathbb{T}}=M,\, \overline{U}^{\mathbb{T}}\overline{U}=\overline{V}^{\mathbb{T}}\overline{V},\, \|\overline{U}\|_{2,0}=\|\overline{V}\|_{2,0}=r\right\}.
  \]
 \end{proposition}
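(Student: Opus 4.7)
The plan is to prove $\mathcal{X}^*\cap\Omega_\kappa=\{M\}$ by a dichotomy on the rank of any candidate $X^*\in\mathcal{X}^*\cap\Omega_\kappa$, and then invoke Lemma \ref{lemma-balance} to identify $\mathcal{W}^*$. First, I note that because $b=\mathcal{A}(M)$ and $f\ge 0$, one has $f(M)=0$ and $\mathrm{rank}(M)=r\le\kappa$, so $M$ is itself a feasible point for \eqref{rank-reg} with objective value $r$; therefore every $X^*\in\mathcal{X}^*$ satisfies
\[
\nu f(X^*)+\mathrm{rank}(X^*)\ \le\ \nu f(M)+\mathrm{rank}(M)\ =\ r,
\]
and in particular $\mathrm{rank}(X^*)\le r$.

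Next I split into two cases. Suppose first $\mathrm{rank}(X^*)=r$. Then the inequality above forces $f(X^*)\le 0$, hence $f(X^*)=0$, i.e. $\mathcal{A}(X^*-M)=0$. Since $\mathrm{rank}(X^*-M)\le\mathrm{rank}(X^*)+\mathrm{rank}(M)\le 2r$, the restricted smallest eigenvalue bound gives $\alpha\|X^*-M\|_F^2\le\|\mathcal{A}(X^*-M)\|^2=0$, and because $\alpha>0$ we conclude $X^*=M$. Suppose instead $\mathrm{rank}(X^*)=k$ with $k<r$. Eckart--Young then yields the lower bound
\[
\|X^*-M\|_F^2\ \ge\ \sum_{i=k+1}^{r}\sigma_i^2(M)\ \ge\ (r-k)\,\sigma_r^2(M),
\]
since $M$ has rank exactly $r$. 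Combining this with the restricted eigenvalue inequality applied to $X^*-M\in\Omega_{2r}$ produces
\[
\nu f(X^*)\ =\ \tfrac{\nu}{2}\|\mathcal{A}(X^*-M)\|^2\ \ge\ \tfrac{\nu\alpha}{2}(r-k)\sigma_r^2(M)\ >\ (r-k),
\]
where the strict inequality uses the hypothesis $\alpha>2/(\nu\sigma_r^2(M))$. Adding $\mathrm{rank}(X^*)=k$ to both sides gives $\nu f(X^*)+\mathrm{rank}(X^*)>r$, contradicting the displayed bound. Thus this case is impossible, and we conclude $\mathcal{X}^*\cap\Omega_\kappa=\{M\}$.

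Finally, substituting this characterization of $\mathcal{X}^*\cap\Omega_\kappa$ into the description of $\mathcal{W}^*$ provided by Lemma \ref{lemma-balance} immediately yields
\[
\mathcal{W}^*=\{(\overline{U},\overline{V})\mid \overline{U}\overline{V}^{\mathbb{T}}=M,\ \overline{U}^{\mathbb{T}}\overline{U}=\overline{V}^{\mathbb{T}}\overline{V},\ \|\overline{U}\|_{2,0}=\|\overline{V}\|_{2,0}=r\},
\]
since $\mathrm{rank}(\overline{U}\overline{V}^{\mathbb{T}})=\mathrm{rank}(M)=r$ on this set. The only nontrivial ingredient is the Eckart--Young lower bound handling the strict-rank-drop case; this is where the particular quantitative threshold $\alpha>2/(\nu\sigma_r^2(M))$ is tightly used, so I expect that to be the essential step rather than a true obstacle. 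The rest is bookkeeping once one observes that $b=\mathcal{A}(M)$ makes $M$ an immediate feasible benchmark.
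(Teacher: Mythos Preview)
Your proof is correct and follows essentially the same route as the paper: bound the optimal value by $r$ using $M$ as a benchmark, rule out $\mathrm{rank}(X^*)<r$ via the restricted eigenvalue and Eckart--Young lower bound, and then identify $X^*=M$ in the equal-rank case before invoking Lemma~\ref{lemma-balance}. The only cosmetic wrinkle is that your early assertion $r\le\kappa$ is not justified at that point (and is not actually needed, since \eqref{rank-reg} is unconstrained); it does follow \emph{a posteriori} once you have shown $\mathcal{X}^*\cap\Omega_\kappa=\{M\}$.
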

 \begin{proof}
  Take an arbitrary $\overline{X}\in \mathcal {X}^*\cap\Omega_{\kappa}$ and write
  $\overline{r}:={\rm rank}(\overline{X})$. By the expression of $f$,
  \begin{equation}\label{temp-ineq32}
    \nu f(\overline{X})+{\rm rank}(\overline{X})\le \nu f(M)+{\rm rank}(M)={\rm rank}(M)=r.
  \end{equation}
  This implies that $\overline{r}\le r$. If $\overline{r}< r$,
  from the fact that $\alpha$ is the $2r$-restricted smallest eigenvalue
  of $\mathcal{A}^*\mathcal{A}$ and $\min_{{\rm rank}(X)\le\overline{r}}\|X-M\|_F^2
  =\sum_{i=\overline{r}+1}^r[\sigma_i(M)]^2$ it follows that
  \begin{align*}
    \nu f(\overline{X})+{\rm rank}(\overline{X})
    &\ge\frac{1}{2}\nu\alpha\big\|\overline{X}\!-M\big\|_F^2+{\rm rank}(\overline{X})\\
    &\ge\frac{1}{2}\nu\alpha(r-\overline{r})[\sigma_r(M)]^2+\overline{r}>r,
  \end{align*}
  where the last inequality is due to $\alpha>\frac{2}{\nu\sigma_r^2(M)}$
  and $r-\overline{r}>0$. This gives a contradiction to the inequality \eqref{temp-ineq32}.
  Consequently, ${\rm rank}(\overline{X})=\overline{r}=r$,
  and $f(\overline{X})= 0$ follows from \eqref{temp-ineq32}.
  Together with $f(\overline{X})\ge\frac{1}{2}\alpha\|\overline{X}-M\|^2$,
  we obtain $\overline{X}=M$. By the arbitrariness
  of $\overline{X}\in\mathcal{X}^*\cap\Omega_{\kappa}$,
  it follows that $\mathcal{X}^*\cap\Omega_{\kappa}=\{M\}$.
  The proof is completed.
 \end{proof}

 \subsection{DC regularized factorizations}\label{sec3.2}

  By Remark \ref{remark31}, one may achieve a global optimal solution of \eqref{rank-reg}
  by solving the $\ell_{2,0}$-norm regularized factorization model \eqref{prob-zreg}
  or its balanced formulation \eqref{prob-balance}. Write
  \begin{equation}\label{FUV}
   F(U,V):=\nu f(UV^{\mathbb{T}}) +\frac{\mu}{4}\|U^{\mathbb{T}}U-V^{\mathbb{T}}V\|_F^2.
  \end{equation}
  Fix an arbitrary $\phi\in\!\mathscr{L}$. By \eqref{group-20},
  the problem \eqref{prob-balance} is equivalent to the following problem
  \begin{align}\label{MPEC-balance}
  &\min_{(U,u)\in\mathbb{R}^{m\times\kappa}\times\mathbb{R}^{\kappa}
  \atop (V,v)\in\mathbb{R}^{n\times\kappa}\times\mathbb{R}^{\kappa}}
   F(U,V)+\frac{1}{2}\sum_{i=1}^{\kappa}\big(\phi(u_i)+\phi(v_i)\big)\nonumber\\
  &\qquad\ {\rm s.t.}\quad\ 0\le u\le e,\,\langle e-u,\mathcal{G}(U)\rangle=0,\nonumber\\
  &\qquad\qquad\quad 0\le v\le e,\,\langle e-v,\mathcal{G}(V)\rangle=0
 \end{align}
 in the sense that if $(\overline{U},\overline{V})$ is a global optimal solution
 to \eqref{prob-balance}, then $(\overline{U},\overline{V}, \overline{u}, \overline{v})$
 with $\overline{u}=\max({\rm sign}(\mathcal {G}(\overline{U})),t^*_{\phi}e)$ and
 $\overline{v}=\max({\rm sign}(\mathcal {G}(\overline{V})),t^*_{\phi}e)$ is globally
 optimal to \eqref{MPEC-balance}; and conversely, if $(\overline{U},\overline{V},\overline{u},\overline{v})$
 is a global optimal solution of \eqref{MPEC-balance}, then $(\overline{U},\overline{V})$
 is globally optimal to \eqref{prob-balance}. Furthermore, the problems \eqref{prob-balance}
 and \eqref{MPEC-balance} have the same optimal value. The problem \eqref{MPEC-balance}
 is an MPEC involving the equilibrium constraints $\langle e-\!u,\mathcal{G}(U)\rangle=0,e-\!u\ge 0$
 and $\langle e-\!v,\mathcal{G}(V)\rangle=0,e-v\ge0$.
 The equivalence between \eqref{prob-balance} and \eqref{MPEC-balance}
 reveals that the combinatorial property of $\|U\|_{2,0}$ and $\|V\|_{2,0}$
 arises from the equilibrium constraints.

 \medskip

 It is well known that handling nonconvex constraints is much harder than handling
 nonconvex objective functions. So, we consider the following penalized problem
 of \eqref{MPEC-balance}
 \begin{align}\label{MPEC-penalty}
  &\min_{(U,u)\in\mathbb{R}^{m\times\kappa}\times\mathbb{R}^{\kappa}
  \atop (V,v)\in\mathbb{R}^{m\times\kappa}\times\mathbb{R}^{\kappa}}\!
  \Big\{F(U,V)+\!\frac{1}{2}\sum_{j=1}^{\kappa}\!\big[\phi(u_j)\!+\phi(v_j)
   +\rho(1\!-\!u_j)\|U_{j}\|+\rho(1\!-\!v_j)\|V_{j}\|\big]\Big\}\nonumber\\
  &\qquad\ {\rm s.t.}\quad\  0\leq u\leq e,\ 0\leq v\leq e.
 \end{align}
 By using \cite[Theorem 3.2]{LiuBP18}, we can establish the following
 global exact penalty result.
 \begin{proposition}\label{prop-Epenalty}
  Let $\phi\in\!\mathscr{L}$. If $\widetilde{f}(U,V)\!:=f(UV^{\mathbb{T}})$
  for $(U,V)\in\mathbb{R}^{m\times\kappa}\times\mathbb{R}^{n\times\kappa}$
  is coercive and $f$ is continuously differentiable in $\mathbb{R}^{m\times n}$,
  then there exists $\widehat{\rho}>0$ such that the problem \eqref{MPEC-penalty}
  associated to each $\rho>\widehat{\rho}$ has the same optimal solution set
  as \eqref{MPEC-balance} does.
 \end{proposition}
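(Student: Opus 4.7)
The plan is to view Proposition~\ref{prop-Epenalty} as a specialization of the general global exact penalty framework for MPECs in \cite[Theorem 3.2]{LiuBP18}, so the proof is really an exercise in verifying that the hypotheses of that theorem hold for \eqref{MPEC-balance}. To that end, I first rewrite the equilibrium constraints in the required form: because $\mathcal{G}(U),\mathcal{G}(V)\ge 0$ automatically and $e-u,e-v\ge 0$ are retained as ``easy'' constraints in \eqref{MPEC-penalty}, the equilibrium equalities
\[
   \langle e-u,\mathcal{G}(U)\rangle=0,\qquad \langle e-v,\mathcal{G}(V)\rangle=0
\]
are componentwise complementarity conditions $(1-u_j)\|U_j\|=0$ and $(1-v_j)\|V_j\|=0$. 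The penalty in \eqref{MPEC-penalty} is precisely $\rho$ times the sum of these nonnegative complementarity products, which is the canonical form to which \cite[Theorem 3.2]{LiuBP18} applies.

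Second, I would verify the two standing hypotheses of that theorem. Existence of global minimizers and level-boundedness of the penalized objective follows from the coercivity of $\widetilde{f}(U,V)=f(UV^{\mathbb{T}})$: the balanced term $\tfrac{\mu}{4}\|U^{\mathbb{T}}U-V^{\mathbb{T}}V\|_F^2$ and the penalty terms are nonnegative, $\phi$ is bounded below on $[0,1]$, and the box constraints $0\le u,v\le e$ keep the auxiliary variables bounded. Hence on any sublevel set the $(U,V)$-component is bounded by coercivity of $\widetilde f$, and the $(u,v)$-component is trivially bounded. Continuous differentiability of $f$ (together with smoothness of the balanced term and of $\phi$ in the interior of $[0,1]$) provides the local Lipschitz regularity required by \cite[Theorem 3.2]{LiuBP18}.

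Third, invoking \cite[Theorem 3.2]{LiuBP18} yields a threshold $\widehat\rho>0$ such that for every $\rho>\widehat\rho$ any global minimizer of \eqref{MPEC-penalty} is feasible for \eqref{MPEC-balance}, and the two problems share the same optimal value and optimal solution set. The equivalence between \eqref{prob-balance} and \eqref{MPEC-balance} established earlier in Section~\ref{sec3.2} then transports this to the original factored formulation.

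The main obstacle I anticipate is not conceptual but rather the translation step: verifying that the calmness/error-bound-type condition implicit in \cite[Theorem 3.2]{LiuBP18} is met by the specific complementarity system $(1-u_j)\|U_j\|=0$, $1-u_j\ge 0$. Because $\|U_j\|$ is the Euclidean norm of a column, one checks that on the feasible set the residual mapping is piecewise polyhedral-like and admits a linear error bound (indeed, for fixed $U$ the constraint in $u$ is linear, and symmetrically for $V$), which is exactly what is needed to push the proof of \cite[Theorem 3.2]{LiuBP18} through. Once this calmness is confirmed, the remaining steps are bookkeeping.
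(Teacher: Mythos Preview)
Your proposal is correct and follows essentially the same route as the paper: both reduce the claim to an invocation of \cite[Theorem 3.2]{LiuBP18}. The paper's argument is more compressed---it uses the coercivity of $\widetilde f$ only to restrict $(U,V)$ to a ball $\mathbb{B}(0,\omega)$, thereby making the feasible region compact, and then applies \cite[Theorem 3.2]{LiuBP18} directly; your additional concern about verifying a calmness/error-bound condition separately is unnecessary, since that is precisely what the compactness reduction (together with the continuous differentiability of $f$) delivers as a hypothesis of the cited theorem.
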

 \begin{proof}
  By the coerciveness of $\widetilde{f}$, there exists a constant $\omega>0$
  such that \eqref{MPEC-balance} and \eqref{MPEC-penalty}
  are equivalent to their respective version in which the variables $U$ and $V$
  are restricted to lie in $\mathbb{B}(0,\omega)$. Thus, the conclusion follows
  by \cite[Theorem 3.2]{LiuBP18}.
 \end{proof}

 Recall the definition of $\psi$ in \eqref{psi}. The problem \eqref{MPEC-penalty}
 can be compactly written as
 \[
  \min_{(U,u)\in\mathbb{R}^{m\times\kappa}\times\mathbb{R}^{\kappa}
  \atop (V,v)\in\mathbb{R}^{m\times\kappa}\times\mathbb{R}^{\kappa}}\!
  \Big\{F(U,V)+\!\frac{1}{2}\sum_{j=1}^{\kappa}\!\big[\psi(u_j)\!+\psi(v_j)
   +\rho(1\!-\!u_j)\|U_{j}\|+\rho(1\!-\!v_j)\|V_{j}\|\big]\Big\}
 \]
  which, by introducing the function $\theta(t):=|t|-\psi^*(|t|)$ for $t\in\mathbb{R}$,
  is simplified as
 \begin{equation}\label{prob-DC}
   \min_{U\in\mathbb{R}^{m\times \kappa},V\in\mathbb{R}^{n\times \kappa}}
   \Big\{\Theta_{\rho}(U,V):=F(U,V)+\frac{1}{2}\sum_{j=1}^{\kappa}\big[\theta(\rho\|U_j\|)+\theta(\rho\|V_j\|)\big]\Big\}.
 \end{equation}
  Notice that $\sum_{j=1}^{\kappa}\big[\theta(\rho\|U_j\|)+\theta(\rho\|V_j\|)\big]$
  is a DC function since $U_j\mapsto \psi^*(\rho\|U_j\|)$ is convex
  by the nondecreasing and convexity of $\psi^*$. By Theorem \ref{prop-Epenalty},
  the following result holds.
 \begin{corollary}\label{corollary1-Epenalty}
  Let $\phi\in\!\mathscr{L}$. If the function $\widetilde{f}$ defined in
  Proposition \ref{prop-Epenalty} is coercive and $f$ is continuously
  differentiable in $\mathbb{R}^{m\times n}$,
  then there exists $\widehat{\rho}>0$ such that the problem \eqref{prob-DC}
  associated to each $\rho>\widehat{\rho}$ has the same optimal solution set
  as \eqref{prob-balance} does.
 \end{corollary}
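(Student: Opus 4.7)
The plan is to chain three equivalences. First, the discussion preceding Proposition~\ref{prop-Epenalty} already furnishes \eqref{prob-balance} $\Leftrightarrow$ \eqref{MPEC-balance}, in the precise sense that the two optimal values coincide and the $(U,V)$-projection of the optimal set of \eqref{MPEC-balance} equals the optimal set of \eqref{prob-balance}. Second, Proposition~\ref{prop-Epenalty} supplies a $\widehat{\rho}>0$ such that \eqref{MPEC-balance} and its penalized version \eqref{MPEC-penalty} share the same optimal solution set for every $\rho>\widehat{\rho}$. What remains is a third equivalence, \eqref{MPEC-penalty} $\Leftrightarrow$ \eqref{prob-DC}, which is the only step the paper leaves implicit and hence the one I would actually carry out.

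For this remaining step I would perform a partial minimization in $(u,v)$ at fixed $(U,V)$. Since $\phi$ agrees with $\psi$ on $[0,1]$ while $\psi\equiv+\infty$ off $[0,1]$, for each index $j$,
\[
  \min_{u_j\in[0,1]}\bigl[\phi(u_j)+\rho(1-u_j)\|U_j\|\bigr]
  =\rho\|U_j\|-\sup_{u_j\in\mathbb{R}}\bigl[\rho\|U_j\|\,u_j-\psi(u_j)\bigr]
  =\rho\|U_j\|-\psi^*(\rho\|U_j\|)
  =\theta(\rho\|U_j\|),
\]
where the last equality uses $\rho\|U_j\|\ge 0$ and the definition of $\theta$. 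Repeating for each $v_j$ and summing transforms the objective of \eqref{MPEC-penalty} into $\Theta_\rho(U,V)$ from \eqref{prob-DC}. Moreover, because $\phi\in\mathscr{L}$ ensures $\psi$ is proper lsc convex with compact effective domain $[0,1]$, the supremum above is attained on $[0,1]$, so for any optimal $(U^\ast,V^\ast)$ of \eqref{prob-DC} one can pick attainers $u^\ast,v^\ast\in[0,1]^{\kappa}$ and lift to an optimum of \eqref{MPEC-penalty}; conversely the $(U,V)$-component of any optimum of \eqref{MPEC-penalty} solves \eqref{prob-DC}. In particular the $(U,V)$-projection of the optimal set of \eqref{MPEC-penalty} coincides with the optimal set of \eqref{prob-DC}.

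Combining the three equivalences, for every $\rho>\widehat{\rho}$ the optimal solution sets of \eqref{prob-balance} and \eqref{prob-DC} coincide, and the threshold $\widehat{\rho}$ is the same one provided by Proposition~\ref{prop-Epenalty}. The only subtlety I anticipate is the bookkeeping at the level of solution \emph{sets} rather than just optimal values across the $(u,v)$-projection step; this is handled by the attainment fact noted above, which is immediate from $\phi\in\mathscr{L}$ via standard Fenchel conjugate theory. No additional analytic ingredient beyond the cited Proposition~\ref{prop-Epenalty} is needed.
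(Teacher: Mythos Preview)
Your proposal is correct and follows exactly the paper's intended route: the paper already records the partial-minimization identity (rewriting \eqref{MPEC-penalty} in terms of $\psi$ and then collapsing to $\Theta_\rho$ via $\theta(t)=|t|-\psi^*(|t|)$) in the text immediately preceding \eqref{prob-DC}, and then the corollary is simply the chain \eqref{prob-balance} $\Leftrightarrow$ \eqref{MPEC-balance} $\Leftrightarrow$ \eqref{MPEC-penalty} $\Leftrightarrow$ \eqref{prob-DC} with the middle link supplied by Proposition~\ref{prop-Epenalty}. Your added care about attainment of the inner supremum and about matching solution \emph{sets} (not just values) under the $(u,v)$-projection is exactly the bookkeeping the paper leaves implicit.
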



 Combining the above discussions with Remark \ref{remark31}, we conclude that
 when $f$ satisfies the requirement of Proposition \ref{prop-Epenalty} and
 $\mathcal{X}^*\cap\Omega_{\kappa}\ne\emptyset$, one may achieve a global optimal
 solution of \eqref{rank-reg} by solving the penalized problem \eqref{MPEC-penalty}
 or its DC reformulation \eqref{prob-DC}. However, the function $\widetilde{f}$
 associated to many $f$ is not coercive, say, the least squares loss function
 in \eqref{least-squares}. In this case, it is natural to ask what conditions
 can ensure that the problem \eqref{MPEC-penalty} is still a global exact penalty
 of \eqref{MPEC-balance}. The following proposition provides such a condition
 when $f$ is specified as the function in \eqref{least-squares} with $b=\mathcal{A}(M)$.
 \begin{proposition}\label{Epenalty-LQ}
  Suppose that the function $f$ is given by \eqref{least-squares} with $b=\mathcal{A}(M)$
  for a matrix $M$ of rank $r$. If $\mathcal{X}^*\cap\Omega_{\kappa}\ne\emptyset$ and
  the $2r$-restricted smallest eigenvalue $\alpha$ of $\mathcal{A}^*\mathcal{A}$
  satisfies $\alpha>\frac{2}{\nu\sigma_r^2(M)}$,
  then for each $\phi\in\!\mathscr{L}$ the problem \eqref{MPEC-penalty} is
  a global exact penalty of \eqref{MPEC-balance} with threshold $\overline{\rho}:=\max\big(1,\frac{\sqrt{\nu}\|\mathcal {A}\|\sqrt{\kappa}}{\sqrt{\nu\alpha}\sigma_r(M)-\sqrt{2}}\sqrt{1+\!\frac{2\sqrt{r}}{\sqrt{\mu}}}\big)\phi_-'(1)$,
  and consequently the problem \eqref{prob-DC} associated to every $\rho>\overline{\rho}$
  has the same global optimal solution set as \eqref{prob-balance} does.
 \end{proposition}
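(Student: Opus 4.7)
The overall plan is to reduce to a bounded setting and then invoke the same exact-penalty theorem \cite[Theorem 3.2]{LiuBP18} that underlies Proposition \ref{prop-Epenalty}. Although $\widetilde{f}$ is not coercive for the least-squares loss, I intend to show that every global minimizer of \eqref{MPEC-penalty} automatically has its $(U,V)$-components confined to a Frobenius-norm ball $\mathbb{B}(0,\omega)$ whose radius $\omega$ can be written explicitly in terms of $M$, $\nu$, $\mu$, $\alpha$, $\|\mathcal{A}\|$, $r$ and $\kappa$. Once such a uniform bound is in hand, the argument of Proposition \ref{prop-Epenalty} runs verbatim on $\mathbb{B}(0,\omega)\times\mathbb{B}(0,\omega)$ and yields the claimed threshold.

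The first step is to exploit the hypothesis $\alpha>\tfrac{2}{\nu\sigma_r^2(M)}$ via Proposition \ref{MW-set}, which pins down $\mathcal{X}^*\cap\Omega_{\kappa}=\{M\}$, and via Lemma \ref{rank-char} gives a balanced factorization $(U_0,V_0)$ of $M$ with $\|U_0\|_{2,0}=\|V_0\|_{2,0}=r$ and $U_0^{\mathbb{T}}U_0=V_0^{\mathbb{T}}V_0$. Choosing $u_{0,j}=1$ when column $j$ of $U_0$ is nonzero and $u_{0,j}=t_\phi^*$ otherwise (and similarly for $v_0$) produces a feasible point of \eqref{MPEC-balance} with objective exactly $r$. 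Consequently, any global minimizer $(U^*,V^*,u^*,v^*)$ of \eqref{MPEC-penalty} must have objective value at most $r$, which I would decompose into the three key estimates
\[
\tfrac{\nu}{2}\|\mathcal{A}(U^*V^{*\mathbb{T}}-M)\|^2\le r,\qquad
\tfrac{\mu}{4}\|U^{*\mathbb{T}}U^*-V^{*\mathbb{T}}V^*\|_F^2\le r,\qquad
\tfrac{\rho}{2}{\textstyle\sum_j}\big[(1-u_j^*)\|U_j^*\|+(1-v_j^*)\|V_j^*\|\big]\le r.
\]

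The crux---and the main obstacle---is converting the first estimate into a Frobenius-norm bound on $U^*V^{*\mathbb{T}}$, because $U^*V^{*\mathbb{T}}-M$ may have rank as large as $\kappa+r$, so Lemma \ref{RIPXY} does not apply directly. I plan to handle this by a best-rank-$r$ splitting $U^*V^{*\mathbb{T}}=\Pi_r(U^*V^{*\mathbb{T}})+(U^*V^{*\mathbb{T}}-\Pi_r(U^*V^{*\mathbb{T}}))$ and apply Lemma \ref{RIPXY} to $\Pi_r(U^*V^{*\mathbb{T}})-M$, controlling the tail by the penalty-term bound together with the near-balance condition. The nonzero denominator $\sqrt{\nu\alpha}\,\sigma_r(M)-\sqrt{2}$ appearing in $\overline{\rho}$ is precisely the quantity that emerges after solving the resulting linear inequality for $\|U^*V^{*\mathbb{T}}\|_F$. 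Combining this with the near-balance $\|U^{*\mathbb{T}}U^*-V^{*\mathbb{T}}V^*\|_F\le 2\sqrt{r/\mu}$ and the inequality $\|U^*\|_F^2+\|V^*\|_F^2\ge 2\|U^*V^{*\mathbb{T}}\|_*\ge 2\|U^*V^{*\mathbb{T}}\|_F/\sqrt{\kappa}\cdot(\text{rank factor})$ yields an explicit $\omega$ whose $\sqrt{\kappa(1+2\sqrt{r}/\sqrt{\mu})}$ dependence matches the pre-factor in $\overline{\rho}$.

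With $\omega$ in hand, the partial Lipschitz modulus of $F$ with respect to each column $U_j$ on $\mathbb{B}(0,\omega)\times\mathbb{B}(0,\omega)$ is controlled by $\sqrt{\nu}\,\|\mathcal{A}\|\,\omega$ (using $\nabla f(X)=\mathcal{A}^*(\mathcal{A}(X)-b)$ and $\|U^*V^{*\mathbb{T}}\|_F\le\|M\|_F+\sqrt{2r/(\nu\alpha)}$). Plugging this modulus into \cite[Theorem 3.2]{LiuBP18}, as done in Proposition \ref{prop-Epenalty}, produces the exact-penalty threshold $(\text{Lipschitz constant})\cdot\phi_-'(1)$, which after bookkeeping of constants coincides with the stated $\overline{\rho}$. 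The consequence for \eqref{prob-DC} is then immediate: \eqref{prob-DC} arises from \eqref{MPEC-penalty} by partial minimization over $(u,v)$, so the global optimal solution sets are in one-to-one correspondence, and Lemma \ref{lemma-balance} identifies them with those of \eqref{prob-balance}.
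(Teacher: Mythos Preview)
Your plan diverges from the paper's approach and has a genuine gap at the step you yourself flag as the crux. The paper does \emph{not} reduce to a bounded setting and invoke \cite[Theorem 3.2]{LiuBP18}; instead it verifies \emph{uniform partial calmness} directly: for every $\rho>\overline{\rho}$ and every $(U,V,u,v)$ (not just minimizers), the penalized objective in \eqref{MPEC-penalty} is at least $r$, and then \cite[Proposition 2.1(a)]{LiuBP18} yields exact penalty. The key device is a \emph{column} partition $J:=\{j:\phi(u_j)+\rho(1-u_j)\|U_j\|+\phi(v_j)+\rho(1-v_j)\|V_j\|\ge 2\}$: on $\overline{J}$ the convexity inequality $\phi(t)-1\ge\phi_-'(1)(t-1)$ forces $\rho\|U_j\|<\phi_-'(1)$ or $\rho\|V_j\|<\phi_-'(1)$, and the balance bound then makes each $\|U_jV_j^{\mathbb{T}}\|_F$ small; on $J$ one applies the $2r$-restricted lower bound to $U_JV_J^{\mathbb{T}}-M$, whose rank is at most $|J|+r<2r$. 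This is how the constants in $\overline{\rho}$ arise.

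Your proposed SVD splitting $U^*V^{*\mathbb{T}}=\Pi_r(U^*V^{*\mathbb{T}})+T$ does not connect to the penalty term: the bound $\sum_j(1-u_j^*)\|U_j^*\|\le 2r/\rho$ constrains columns of $U^*$, not singular values of $U^*V^{*\mathbb{T}}$, so there is no mechanism to control $\|T\|_F$. To salvage the argument you would have to pass to a column-based decomposition, at which point you have essentially rediscovered the paper's proof. Moreover, even if you could obtain a uniform ball $\mathbb{B}(0,\omega)$, feeding $\omega$ into \cite[Theorem 3.2]{LiuBP18} gives a threshold depending on the Lipschitz constant of $F$ over that ball; there is no reason this should reproduce the explicit $\overline{\rho}$ stated, and your claim that ``after bookkeeping of constants'' it coincides is unsupported. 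Finally, bounding $\|U^*\|_F,\|V^*\|_F$ from $\|U^*V^{*\mathbb{T}}\|_F$ via near-balance is not as immediate as you suggest (the variational identity $\|X\|_*=\min\frac{1}{2}(\|U\|_F^2+\|V\|_F^2)$ gives a lower, not upper, bound on $\|U^*\|_F^2+\|V^*\|_F^2$).
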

 \begin{proof}
  We first argue that for all $\rho>\overline{\rho}$ and $(U,V,u,v)\in\mathbb{R}^{m\times\kappa}
  \times\mathbb{R}^{n\times \kappa}\times [0,e]\times[0,e]$,
  \begin{align}\label{UPC}
   F(U,V)+\frac{1}{2}{\textstyle\sum_{j=1}^{\kappa}}\big[\phi(u_j)+\rho(1\!-u_j)\|U_{j}\|
   +\phi(v_j)+\rho(1\!-v_j)\|V_{j}\|\big]\ge r.
  \end{align}
  Fix an arbitrary $\rho>\overline{\rho}$
  and an arbitrary $(U,V,u,v)\in\mathbb{R}^{m\times\kappa}
  \times\mathbb{R}^{n\times \kappa}\times [0,e]\times[0,e]$. Write
  \[
    J:=\big\{j\ |\ \phi(u_j)+\rho(1\!-u_j)\|U_j\|+\phi(v_j)+\rho(1\!-v_j)\|V_j\|\ge 2\big\}
    \ {\rm and}\ \overline{J}=\{1,\ldots,\kappa\}\backslash J.
  \]
  Clearly, if $|J|\ge r$ or $\frac{\mu}{4}\|U^{\mathbb{T}}U-V^{\mathbb{T}}V\|_F^2\ge r$,
  the stated conclusion automatically holds. We next consider
  the case that $|J|<r$ and $\frac{\mu}{4}\|U^{\mathbb{T}}U-V^{\mathbb{T}}V\|_F^2<r$.
  Notice that $(\overline{U},\overline{V},e,e)$ for any
  $(\overline{U},\overline{V})\in\mathcal{W}^*$ is a feasible solution of
  \eqref{MPEC-balance} with the objective value equal to $\kappa$,
  while the optimal value of \eqref{MPEC-balance} is $r$.
  Hence, $r\leq\kappa$, and we have $\overline{J}\ne\emptyset$.
  For each $j\in\overline{J}$,
  \begin{equation}\label{temp-ineq33}
    \|U_{\!j}V_{j}^{\mathbb{T}}\|_F=\|U_{j}\|\|V_{j}\|\leq \frac{\phi'_-(1)}{\rho}\sqrt{\Big(\frac{\phi'_-(1)}{\rho}\Big)^2+\frac{2\sqrt{r}}{\sqrt{\mu}}}.
  \end{equation}
  Indeed, for each $j\in\overline{J}$, it holds that
  $\phi(u_j)+\rho(1-u_j)\|U_{j}\|<1$ or $\phi(v_j)+\rho(1-v_j)\|V_{j}\|<1$.
  Together with $\phi(u_j)-1=\phi(u_j)-\phi(1)\ge\phi_{-}'(1)(u_j-1)$ and
  $\phi(v_j)-1\ge\phi_{-}'(1)(v_j-1)$, it follows that
  $\rho\|U_{j}\|<\phi_-'(1)$ or $\rho\|V_{j}\|<\phi_-'(1)$.
  Notice that $|\|U_{j}\|^2-\|V_{j}\|^2|<\frac{2\sqrt{r}}{\sqrt{\mu}}$
  since $\frac{\mu}{4}\|U^{\mathbb{T}}U-V^{\mathbb{T}}V\|_F^2<r$.
  Then, the inequality \eqref{temp-ineq33} follows. Thus, we have
 \begin{align*}
  \big\|\mathcal{A}(UV^{\mathbb{T}}-M)\big\|
  &\ge \big\|\mathcal {A}(U_{J}V_{J}^{\mathbb{T}}-M)\big\|
       -\big\|\mathcal {A}(U_{\overline{J}}V_{\overline{J}}^{\mathbb{T}})\big\|\nonumber\\
  &\ge\sqrt{\alpha}\big\|U_{J}V_{J}^{\mathbb{T}}-M\big\|_F
       -\|\mathcal {A}\|\big\|U_{\overline{J}}V_{\overline{J}}^{\mathbb{T}}\big\|_F\nonumber\\
  &\ge\sqrt{\alpha(r-|J|)}\sigma_r(M)-\sqrt{\kappa}\|\mathcal{A}\|\max_{1\le j\le \kappa}
  \big\|U_{\!j}V_{j}^{\mathbb{T}}\big\|_F\nonumber\\
  &\ge\sqrt{\alpha(r-|J|)}\sigma_r(M)-\frac{\|\mathcal {A}\|\sqrt{\kappa}\phi'_-(1)}{\rho}
       \sqrt{\Big(\frac{\phi'_-(1)}{\rho}\Big)^2+\frac{2\sqrt{r}}{\sqrt{\mu}}}\nonumber\\
   &>\sqrt{\frac{2(r-|J|)}{\nu}}
  \end{align*}
  where the third inequality is using $\min_{{\rm rank}(X)\le|J|}\|X-M\|_F^2
  =\sum_{i=|J|+1}^r\sigma_i^2(M)$, and the last one is due to $\rho>\phi'_-(1)$
  and $\rho\big(\sqrt{\alpha}\sigma_r(M)-\sqrt{2\nu^{-1}}\big)
  >\|\mathcal {A}\|\sqrt{\kappa}\phi'_-(1)\sqrt{1+\frac{2\sqrt{r}}{\sqrt{\mu}}}$.
  The last inequality implies that \eqref{UPC} holds.
  Recall that the optimal value of \eqref{MPEC-balance} equals $r$.
  By \cite[Definition 2.2]{LiuBP18}, the inequality \eqref{UPC} implies that
  the MPEC \eqref{MPEC-balance} is uniformly partial calm over its global optimal solution set,
  which by \cite[Proposition 2.1(a)]{LiuBP18} is equivalent to saying that
  \eqref{MPEC-penalty} is a global exact penalty of \eqref{MPEC-balance}.
 \end{proof}

 From the proof of Proposition \ref{Epenalty-LQ}, we see that the balanced
 term $\frac{\mu}{4}\|U^{\mathbb{T}}U-V^{\mathbb{T}}V\|_F^2$ in the function $F$
 plays a crucial role. When replacing $F$ with $\widetilde{f}$, it is unclear
 whether the result of Proposition \ref{Epenalty-LQ} holds or not, and we leave
 this topic for future study.
 \section{KL property of exponent $1/2$ of $\Psi$ and $\Theta_{\rho}$}

  In this section, for the function $f$ specified as in \eqref{least-squares}
  with $b=\mathcal{A}(M)$ for a matrix $M$ of rank $r$, we shall establish
  the KL property of exponent $1/2$ for the functions $\Psi$ and $\Theta_{\rho}$
  over the set of their global minimizers. This often requires the following inequality
  \begin{equation}\label{mat-ineq}
   \sigma_{\kappa}(A)\|B\|_F\le \|AB^{\mathbb{T}}\|_F\le\|A\|\|B\|_F
   \quad\ \forall A,B\in\mathbb{R}^{m\times\kappa}.
  \end{equation}
  For convenience, in the subsequent analysis we write $\sigma_i=\sigma_i(M)$ for $i=1,2,\ldots,m$.
  \subsection{KL property of exponent $1/2$ of $\Psi$}\label{sec4.1}

  To establish the KL property of exponent $1/2$ of $\Psi$ over the set of
  its global minimizer set, we first characterize the subdifferential of
  $\Psi$ at any point $(U,V)\in\mathbb{R}^{m\times\kappa}\times\mathbb{R}^{n\times\kappa}$.
 \begin{lemma}\label{subdiff-Psi}
  Fix an arbitrary $(U,V)\in\mathbb{R}^{m\times\kappa}\times\mathbb{R}^{n\times\kappa}$
  and write $J_U:=\{j\ |\ U_{j}\ne 0\}$ and  $J_V:=\{j\ |\ V_{j}\ne 0\}$.
  Then, $\widehat{\partial}\Psi(U,V)=\partial\Psi(U,V)=\partial_U\Psi(U,V)\times\partial_V\Psi(U,V)$ with
  \begin{subequations}
  \begin{align*}
  \partial_U\Psi(U,V)&=\!\Big\{G\in\mathbb{R}^{m\times \kappa}|\ G_{j}\!=\!\nu[\mathcal {A}^*\mathcal {A}(UV^{\mathbb{T}}\!-\!M)]V_{j}+\!\mu U(U^{\mathbb{T}}U_{j}\!-V^{\mathbb{T}}V_j),\,j\in J_U\Big\},\\
  \partial_V\Psi(U,V)&=\!\Big\{H\in \mathbb{R}^{n\times\kappa}|\ H_{j}\!=\!\nu[\mathcal{A}^*\mathcal {A}(UV^{\mathbb{T}}\!-\!M)]^{\mathbb{T}}U_{j}+\!\mu V(V^{\mathbb{T}}V_{j}\!-\!U^{\mathbb{T}}U_{j}),\, j\in J_V\Big\}.
 \end{align*}
 \end{subequations}
 \end{lemma}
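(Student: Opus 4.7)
The plan is to decompose $\Psi = F + G_U + G_V$, where $F$ is the smooth function defined in \eqref{FUV} and $G_U(U) := \tfrac{1}{2}\|U\|_{2,0}$, $G_V(V) := \tfrac{1}{2}\|V\|_{2,0}$, and then apply the subdifferential sum rule for a $C^1$ function plus a subdifferentially regular one. Since $F$ is $C^\infty$, standard matrix calculus, using $b=\mathcal{A}(M)$, gives
\[
\nabla_U F(U,V) = \nu\,\mathcal{A}^*\mathcal{A}(UV^{\mathbb{T}}-M)V + \mu\,U(U^{\mathbb{T}}U - V^{\mathbb{T}}V),
\]
whose $j$-th column is exactly the expression appearing in the formula for $\partial_U\Psi$; a symmetric computation yields the column-wise expression used in $\partial_V\Psi$.

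Next I would verify that $G_U$ is subdifferentially regular at every $U$. Writing $G_U(U) = \tfrac12\sum_{j=1}^\kappa h(U_j)$ with $h(z) = {\rm sign}(\|z\|)$, the summands depend on disjoint block variables $U_1,\ldots,U_\kappa$, so it suffices to verify regularity of the single-column function $h$ on $\mathbb{R}^m$. By Lemma \ref{subdiff-L2norm1}, $\widehat{\partial}h(z) = \partial h(z)$, equal to $\{0\}^m$ for $z\neq 0$ and to $\mathbb{R}^m$ for $z=0$. The horizon subdifferential $\partial^\infty h(z)$ coincides with $[\widehat{\partial}h(z)]^\infty$ in both cases: trivially $\{0\}^m$ at $z\ne 0$ where $h$ is locally constant, and the full $\mathbb{R}^m$ at $z=0$ since the constant sequence $z^k\equiv 0$ with arbitrary $v^k\in\widehat{\partial}h(0)=\mathbb{R}^m$ and any $\lambda_k\downarrow 0$ can reach any vector. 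Thus $h$ is regular by Remark \ref{remark-Gsubdiff}(iii), and by column separability $G_U$ is regular with $\partial G_U(U)=\widehat{\partial}G_U(U)$ equal to the set of matrices whose $j$-th column is $0$ for $j\in J_U$ and arbitrary in $\mathbb{R}^m$ for $j\notin J_U$; the same holds for $G_V$.

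Finally, because $F$ is $C^1$ and $G_U + G_V$ is regular at $(U,V)$, the standard sum rule from \cite[Exercise 8.8]{RW98} gives
\[
\widehat{\partial}\Psi(U,V) = \nabla F(U,V) + \widehat{\partial}(G_U+G_V)(U,V) = \partial\Psi(U,V),
\]
and the block-separable structure $(G_U+G_V)(U,V)=G_U(U)+G_V(V)$ yields the Cartesian splitting $\partial\Psi(U,V) = \partial_U\Psi(U,V)\times\partial_V\Psi(U,V)$. Plugging the columnwise description of $\partial G_U,\partial G_V$ into the sum with $\nabla F$ produces exactly the claimed formulas: for $j\in J_U$ the $j$-th column of any $G\in\partial_U\Psi(U,V)$ must equal the $j$-th column of $\nabla_U F$, whereas for $j\notin J_U$ it is unconstrained and thus omitted from the description; the analogous statement holds for $\partial_V\Psi$. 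The only nonroutine step is the regularity of $h$ at $z=0$, and once the horizon-subdifferential condition is verified as above, the rest is a textbook assembly of sum and product rules together with elementary gradient computations.
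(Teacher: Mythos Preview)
Your proposal is correct and follows essentially the same route as the paper: decompose $\Psi$ into the smooth part $F$ and the column-separable $\ell_{2,0}$ part, invoke Lemma~\ref{subdiff-L2norm1} for the single-column function $h$, and assemble via the sum rule (Exercise~8.8(c) of \cite{RW98}) together with the separability rule (Proposition~10.5 of \cite{RW98}). The only cosmetic difference is that you verify regularity of $h$ explicitly through the horizon-subdifferential criterion of Remark~\ref{remark-Gsubdiff}(iii), whereas the paper computes $\widehat{\partial}g$ and $\partial g$ independently and observes they coincide; both arguments yield the same conclusion.
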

 \begin{proof}
  By the expression of $F$ in \eqref{FUV}, $F$ is continuously
  differentiable in $\mathbb{R}^{m\times\kappa}\times\mathbb{R}^{n\times\kappa}$.
  Let $g(Z):=\|Z\|_{2,0}$ for $Z\in\mathbb{R}^{m\times\kappa}$.
  From \cite[Exercise 8.8(c)\& Proposition 10.5]{RW98},
  \begin{align*}
   \widehat{\partial}\Psi(U,V)=\widehat{\partial}_U\Psi(U,V)\times\widehat{\partial}_V\Psi(U,V)
   =\big(\nabla_UF(U,V)+\widehat{\partial}g(U)\big)\times
    \big(\nabla_VF(U,V)+\widehat{\partial}g(V)\big),\\
   \partial\Psi(U,V)=\partial_U\Psi(U,V)\times\partial_V\Psi(U,V)
   =\big(\nabla_UF(U,V)+\partial g(U)\big)\times
    \big(\nabla_VF(U,V)+\partial g(V)\big).
  \end{align*}
  By invoking \cite[Proposition 10.5]{RW98} and Lemma \ref{subdiff-L2norm1},
  it immediately follows that
  \[
    \widehat{\partial}g(U)
    =\partial g(U)=S_1\times\cdots\times S_\kappa
    \ \ {\rm with}\ S_j=\left\{\begin{array}{cl}
    \{0\}^n & {\rm if}\ j\in J_U;\\
    \mathbb{R}^n & {\rm if}\ j\notin J_U.
    \end{array}\right.
  \]
  Similarly, $\widehat{\partial}g(V)=\partial g(V)$ has
  such a characterization. Thus, we get the result.
 \end{proof}

  The following lemma provides a kind of stability for the global minimizer set
  of $\Psi$, i.e., for each global minimizer $(\overline{U},\overline{V})$
  of $\Psi$, there exists a neighborhood such that every point pair in this neighborhood
  has the same nonzero columns.
 \begin{lemma}\label{lemma41}
  Suppose that $\mathcal{X}^*\!\cap\Omega_{\kappa}\ne\emptyset$ and
  the $2r$-restricted smallest eigenvalue $\alpha$ of $\mathcal{A}^*\mathcal{A}$
  satisfies $\alpha>\frac{2}{\nu\sigma_r^2}$.
  Fix an arbitrary $(\overline{U},\overline{V})\in\mathcal{W}^*$. Then, for any
  $(U,V)\in\mathbb{R}^{m\times\kappa}\times\mathbb{R}^{n\times\kappa}$ with
  $\|[U-\overline{U}\ \ V-\overline{V}]\|_F<\frac{\sqrt{\sigma_r}}{4}$ and
  $0<\Psi(U,V)-\Psi(\overline{U},\overline{V})<\frac{1}{2}$, it holds that
  \[
    \big\{j\ |\ \overline{U}_j\ne0\big\}=\big\{j\ |\ \overline{V}_j\ne 0\big\}
    =\big\{j\ |\ U_j\ne 0\big\}=\big\{j\ |\ V_j\ne 0\big\}.
  \]
 \end{lemma}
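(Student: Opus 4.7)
The plan is to first characterize the common support of $(\overline{U},\overline{V})$, establish a uniform lower bound on the norms of their nonzero columns, and then argue that no perturbation meeting the two stated smallness conditions can alter this support. By Proposition \ref{MW-set}, the assumption $\alpha>\frac{2}{\nu\sigma_r^2}$ forces $\overline{U}\,\overline{V}^{\mathbb{T}}=M$, $\overline{U}^{\mathbb{T}}\overline{U}=\overline{V}^{\mathbb{T}}\overline{V}$, and $\|\overline{U}\|_{2,0}=\|\overline{V}\|_{2,0}=r$. Comparing diagonal entries in $\overline{U}^{\mathbb{T}}\overline{U}=\overline{V}^{\mathbb{T}}\overline{V}$ yields $\|\overline{U}_j\|=\|\overline{V}_j\|$ for every $j$, so the two supports coincide with a common index set $J$ of cardinality $r$.

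The heart of the argument is to produce a lower bound on $\|\overline{U}_j\|$ and $\|\overline{V}_j\|$ for $j\in J$. Let $\widetilde{U},\widetilde{V}$ be the $m\times r$ and $n\times r$ submatrices formed by the columns of $\overline{U},\overline{V}$ indexed by $J$. Then $\widetilde{U}\widetilde{V}^{\mathbb{T}}=M$ and $\widetilde{U}^{\mathbb{T}}\widetilde{U}=\widetilde{V}^{\mathbb{T}}\widetilde{V}$. Inserting the balance identity gives
\[
(\widetilde{U}\widetilde{U}^{\mathbb{T}})^2=\widetilde{U}(\widetilde{U}^{\mathbb{T}}\widetilde{U})\widetilde{U}^{\mathbb{T}}=\widetilde{U}\widetilde{V}^{\mathbb{T}}\widetilde{V}\widetilde{U}^{\mathbb{T}}=MM^{\mathbb{T}},
\]
so the nonzero singular values of $\widetilde{U}$ are exactly $\sqrt{\sigma_1},\ldots,\sqrt{\sigma_r}$, and in particular $\sigma_r(\widetilde{U})=\sqrt{\sigma_r}$. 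By \eqref{mat-ineq} applied to $\widetilde{U}$ and $B=e_j^{\mathbb{T}}$, or directly from $\|\widetilde{U}e_j\|\ge\sigma_r(\widetilde{U})\|e_j\|$, I obtain $\|\overline{U}_j\|\ge\sqrt{\sigma_r}$ for every $j\in J$, and by symmetry $\|\overline{V}_j\|\ge\sqrt{\sigma_r}$.

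With this lower bound, the conclusion follows from two short contradiction arguments. First, if $U_j=0$ for some $j\in J$, then $\|[U-\overline{U}\ V-\overline{V}]\|_F\ge\|U_j-\overline{U}_j\|=\|\overline{U}_j\|\ge\sqrt{\sigma_r}$, violating the hypothesis $\|[U-\overline{U}\ V-\overline{V}]\|_F<\sqrt{\sigma_r}/4$; the analogous statement holds for $V$. Hence $\{j:U_j\ne0\}\supseteq J$ and $\{j:V_j\ne0\}\supseteq J$. Second, because $b=\mathcal{A}(M)$ forces $f(M)=0$ and the balance term vanishes at $(\overline{U},\overline{V})$, one has $\Psi(\overline{U},\overline{V})=r$. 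If either of the preceding inclusions were strict, then $\|U\|_{2,0}+\|V\|_{2,0}\ge 2r+1$, and dropping the nonnegative terms $\nu f(UV^{\mathbb{T}})$ and $\frac{\mu}{4}\|U^{\mathbb{T}}U-V^{\mathbb{T}}V\|_F^2$ gives $\Psi(U,V)\ge r+\frac{1}{2}$, contradicting $\Psi(U,V)-\Psi(\overline{U},\overline{V})<\frac{1}{2}$. The four-fold equality of supports follows.

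The only nontrivial step is the column-norm lower bound in Step~2, which rests on the algebraic identity $(\widetilde{U}\widetilde{U}^{\mathbb{T}})^2=MM^{\mathbb{T}}$ produced by the balanced factorization; this is where the role of the balance term $\frac{\mu}{4}\|U^{\mathbb{T}}U-V^{\mathbb{T}}V\|_F^2$ in $\Psi$ becomes essential. Once the bound $\sqrt{\sigma_r}$ is available, the perturbation threshold $\sqrt{\sigma_r}/4$ and the objective gap $1/2$ mechanically translate into support preservation from both sides.
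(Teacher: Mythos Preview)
Your proof is correct and follows essentially the same approach as the paper's: both invoke Proposition~\ref{MW-set} to get $\overline{U}\,\overline{V}^{\mathbb{T}}=M$ with balanced factors and common support $J$ of size $r$, establish the lower bound $\|\overline{U}_j\|\ge\sqrt{\sigma_r}$ for $j\in J$ via the identity $\sigma_r(\widetilde{U})=\sqrt{\sigma_r}$, use the Frobenius-norm closeness to force $J\subseteq J_U,J_V$, and then use the objective-gap bound together with integrality of $\|U\|_{2,0}+\|V\|_{2,0}$ to rule out strict inclusion. The only cosmetic difference is that you derive $\sigma_r(\widetilde{U})=\sqrt{\sigma_r}$ by the clean algebraic identity $(\widetilde{U}\widetilde{U}^{\mathbb{T}})^2=MM^{\mathbb{T}}$, whereas the paper states it more tersely from $\sigma(\overline{U})=\sigma(\overline{V})$ and $\sigma(\overline{U}\,\overline{V}^{\mathbb{T}})=\sigma(M)$.
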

 \begin{proof}
  Write $J\!:=\big\{j\ |\ \overline{U}_j\ne0\big\},
  J_U\!:=\{j\ |\ U_j\ne 0\}$ and $J_V\!:=\{j\ |\ V_j\ne 0\}$.
  By using Proposition \ref{MW-set}, we have $J=\big\{j\ |\ \overline{V}_j\ne 0\big\}$,
  $\|\overline{U}_j\|=\|\overline{V}_j\|$ for each $j\in J$, and $|J|=r$.
  Let $\{e_1,\ldots,e_{\kappa}\}$ be the orthonormal basis of $\mathbb{R}^{\kappa}$.
  Then, for each $j\in J$, we have
  \[
   \|\overline{U}_{\!j}\|^2=\|\overline{U}e_j\|^2=\langle e_je_j^{\mathbb{T}}, \overline{U}_J^{\mathbb{T}} \overline{U}_J\rangle
    \geq \sigma_r(\overline{U}_J^{\mathbb{T}} \overline{U}_J) =\sigma_r(\overline{U}^{\mathbb{T}} \overline{U}).
  \]
  This implies that
  \(
    \min_{j\in J}\|\overline{U}_{\!j}\|=\min_{j\in J}\|\overline{U}e_j\|\ge\sigma_r(\overline{U})
    =\sqrt{\sigma_r},
  \)
  where the last equality is using $\sigma(\overline{U})=\sigma(\overline{V})$ and
  $\sigma(\overline{U}\overline{V}^{\mathbb{T}})=\sigma(M)$.
  Together with $\|\overline{U}_{\!j}\|=\|\overline{V}_{\!j}\|$ for each $j\in J$,
  \[
    \min_{j\in J}\|\overline{U}_{\!j}\|=\min_{j\in J}\|\overline{V}_{\!j}\|
    \ge\sqrt{\sigma_r}.
  \]
  For each $j\in J$, since $\|U_j\|=\|U_j-\overline{U}_j+\overline{U}_j\|
  \ge\|\overline{U}_j\|-\!\|U-\overline{U}\|_F\ge
  \sqrt{\sigma_r}-\frac{\sqrt{\sigma_r}}{4}>\frac{1}{2}\sqrt{\sigma_r}$,
  we have $J\subseteq J_{U}$. Similarly, $J\subseteq J_{V}$.
  In addition, from $0<\Psi(U,V)-\Psi(\overline{U},\overline{V})<1/2$
  and $\Psi(\overline{U},\overline{V})=r$, it follows that
  $r<\Psi(U,V)<r+\frac{1}{2}$. By the expression of $\Psi$, we have
  \(
    r=|J|\le \frac{1}{2}(\|U\|_{2,0}+\|V\|_{2,0})<\Psi(U,V)<r+\frac{1}{2},
  \)
  which implies that $\|U\|_{2,0}+\|V\|_{2,0}=2r$. Together with
  $J\subseteq J_{U}$ and $J\subseteq J_{V}$, we conclude that
  $J= J_{U}=J_{V}$.
 \end{proof}
 \begin{theorem}\label{KLL20}
  Let $\alpha$ and $\beta$ be the $2r$-restricted smallest and largest eigenvalues
  of $\mathcal{A}^*\mathcal{A}$, respectively.
  Suppose that $\mathcal{X}^*\cap\Omega_{\kappa}\ne\emptyset$ and
  $\frac{\beta}{\alpha}<\frac{128\sigma_1^2(4\sigma_1+\sigma_r)^2+\sigma_r^4}
  {128\sigma_1^2(4\sigma_1+\sigma_r)^2-\sigma_r^4}$ with
  $ \alpha>\frac{4}{\nu\sigma_r^2}$. Fix an arbitrary
  $(\overline{U},\overline{V})\in\mathcal{W}^*$. Then,
  for any $(U,V)$ with $\|[U\!-\overline{U}\ \ V\!-\overline{V}]\|_F
  <\frac{\sqrt{\sigma_r}}{4}$ and $0<\Psi(U,V)-\Psi(\overline{U},\overline{V})<\frac{1}{2}$,
  the following inequality holds
  \begin{equation}\label{KL-Psi}
   {\rm dist}^2(0,\partial \Psi(U,V))\ge\gamma\big[\Psi(U,V)-\Psi(\overline{U},\overline{V})\big].
  \end{equation}
  with $\gamma=\min\big(\frac{\nu}{\beta}\big[\frac{(\beta+\alpha)\sigma_r^4}{128\sqrt{\sigma_1^3}(4\sigma_1+\sigma_r)^2}
       -\!\sqrt{\sigma_1}(\beta-\alpha)\big]^2,2\mu\sigma_r\big)$.
 \end{theorem}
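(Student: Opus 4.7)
The plan is to reduce the KL inequality \eqref{KL-Psi} to a local Polyak-{\L}ojasiewicz-type estimate for the smooth function $F$ of \eqref{FUV}, and then to prove that estimate from the restricted condition number hypothesis on $\mathcal{A}$.

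\textbf{Step 1 (freezing the combinatorial part).} By Lemma \ref{lemma41}, for every $(U,V)$ in the prescribed neighbourhood the column supports $J_U$ and $J_V$ both coincide with the common support $J$ of $\overline{U}$ and $\overline{V}$, and $|J|=r$. Two consequences follow immediately. First, the $\ell_{2,0}$ contributions at $(U,V)$ and $(\overline{U},\overline{V})$ are equal, so $\Psi(U,V)-\Psi(\overline{U},\overline{V})=F(U,V)-F(\overline{U},\overline{V})=F(U,V)$, where the last equality uses $\overline{U}\,\overline{V}^{\mathbb{T}}=M$ together with $\overline{U}^{\mathbb{T}}\overline{U}=\overline{V}^{\mathbb{T}}\overline{V}$. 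Second, by Lemma \ref{subdiff-Psi} the subdifferential of $\Psi$ is free on the zero columns $j\notin J$, while the corresponding columns of $\nabla_U F$ and $\nabla_V F$ vanish automatically (since $U_j=V_j=0$ kills the data-fidelity part, and $(U^{\mathbb{T}}U-V^{\mathbb{T}}V)e_j=0$ kills the balancing part). Hence
\[
\mathrm{dist}^2\bigl(0,\partial\Psi(U,V)\bigr)=\|\nabla_U F(U,V)\|_F^2+\|\nabla_V F(U,V)\|_F^2,
\]
and \eqref{KL-Psi} reduces to a PL-type bound $\|\nabla F(U,V)\|_F^2\ge\gamma\, F(U,V)$.

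\textbf{Step 2 (PL-type inequality).} Write $W:=UV^{\mathbb{T}}-M$ and $E:=U^{\mathbb{T}}U-V^{\mathbb{T}}V$, so that $2F(U,V)=\nu\|\mathcal{A}(W)\|^2+\tfrac{\mu}{2}\|E\|_F^2$. I would bound the two contributions separately by testing the gradient against two different directions. For the balancing term, the cancellation identity $\langle\nabla_U F,UE\rangle-\langle\nabla_V F,VE\rangle=\mu(\|UE\|_F^2+\|VE\|_F^2)$, combined with \eqref{mat-ineq} and the Weyl perturbation bound $\sigma_r(U_J),\sigma_r(V_J)\ge\tfrac{3}{4}\sqrt{\sigma_r}$ (which follows from $\|U-\overline{U}\|_F<\tfrac{\sqrt{\sigma_r}}{4}$ and $\sigma_r(\overline{U}_J)=\sqrt{\sigma_r}$, as in the proof of Lemma \ref{lemma41}), yields after Cauchy-Schwarz a lower bound $\|\nabla F\|_F^2\ge 2\mu\sigma_r\cdot\tfrac{\mu}{4}\|E\|_F^2$, which produces the second entry of $\gamma$. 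For the data-fidelity term, I would test against $(U-\overline{U}R,V-\overline{V}R)$ for a Procrustes-optimal rotation $R\in\mathbb{O}^{\kappa}$ and exploit $\mathrm{rank}(W)\le 2r$ via Lemma \ref{RIPXY}, which converts $\langle\mathcal{A}(W),\mathcal{A}(\cdot)\rangle$ into a Frobenius inner product plus a controlled error of relative size $\tfrac{\beta-\alpha}{\beta+\alpha}$. The spectral bounds $\|U\|,\|V\|\le\sqrt{\sigma_1}+\tfrac{\sqrt{\sigma_r}}{4}$ from the same perturbation argument, together with \eqref{mat-ineq} applied once more, then deliver a bound of the form $\|\nabla F\|_F^2\ge\tfrac{\nu}{\beta}\bigl[\tfrac{(\beta+\alpha)\sigma_r^4}{128\sqrt{\sigma_1^3}(4\sigma_1+\sigma_r)^2}-\sqrt{\sigma_1}(\beta-\alpha)\bigr]^2\cdot\tfrac{\nu}{2}\|\mathcal{A}(W)\|^2$, which is exactly the first entry of $\gamma$.

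\textbf{Main obstacle.} The delicate point is the sharp bookkeeping of cross terms in Step 2. The restricted condition number hypothesis is calibrated precisely so that the bracket $\tfrac{(\beta+\alpha)\sigma_r^4}{128\sqrt{\sigma_1^3}(4\sigma_1+\sigma_r)^2}-\sqrt{\sigma_1}(\beta-\alpha)$ is strictly positive; without this, the RIP-type error from Lemma \ref{RIPXY} would swamp the data-fidelity lower bound. A secondary difficulty is bounding $\|U-\overline{U}R\|_F^2+\|V-\overline{V}R\|_F^2$ by a multiple of $\sigma_r^{-1}(\|W\|_F^2+\|E\|_F^2)$, needed to turn the Cauchy-Schwarz upper estimate into a linear-in-$F$ lower bound on $\|\nabla F\|_F^2$. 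This Procrustes-alignment estimate crucially relies on the balanced identity $\overline{U}^{\mathbb{T}}\overline{U}=\overline{V}^{\mathbb{T}}\overline{V}$ and on the lower bound on $\sigma_r(U_J)$, and I expect it to be isolated as an auxiliary lemma before the main computation.
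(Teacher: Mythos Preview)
Your Step 1 is correct and coincides with the paper's reduction via Lemma \ref{lemma41} and Lemma \ref{subdiff-Psi}. Your treatment of the balancing term via the cancellation identity $\langle\nabla_U F,UE\rangle-\langle\nabla_V F,VE\rangle=\mu(\|UE\|_F^2+\|VE\|_F^2)$ (valid because $E$ is symmetric while $U^{\mathbb{T}}[\mathcal{A}^*\mathcal{A}(W)]V-\big(U^{\mathbb{T}}[\mathcal{A}^*\mathcal{A}(W)]V\big)^{\mathbb{T}}$ is skew) is a clean route to the $2\mu\sigma_r$ constant; the paper reaches the same bound by separately estimating $\|U(U^{\mathbb{T}}U-V^{\mathbb{T}}V)\|_F$ and $\|(V^{\mathbb{T}}V-U^{\mathbb{T}}U)V^{\mathbb{T}}\|_F$ via \eqref{mat-ineq} and $\sigma_r(U_J),\sigma_r(V_J)>\sqrt{\sigma_r/2}$.

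Where you diverge is the data-fidelity estimate. The paper does \emph{not} use a Procrustes alignment. It tests the data part of the gradient against $\bigl((UV^{\mathbb{T}}\!-M)VV^{\mathbb{T}},\,UU^{\mathbb{T}}(UV^{\mathbb{T}}\!-M)\bigr)$, applies Lemma \ref{RIPXY} to pass from $\langle\mathcal{A}(W),\mathcal{A}(WVV^{\mathbb{T}})\rangle$ to $\|WV\|_F^2$ up to an RIP error of size $\tfrac{\beta-\alpha}{\beta+\alpha}$, and then proves the regularity inequality
\[
\|WV\|_F^2+\|U^{\mathbb{T}}W\|_F^2\ \ge\ \frac{\sigma_r^4}{32\sigma_1(4\sigma_1+\sigma_r)^2}\,\|W\|_F^2
\]
by a block decomposition in the SVD coordinates of $M$: with $\widehat U=P^{\mathbb{T}}U$, $\widehat V=Q^{\mathbb{T}}V$ split into their leading $r\times r$ and trailing blocks, several triangle-inequality manipulations (using $\sigma_r(\widehat U_{IJ}\widehat V_{IJ}^{\mathbb{T}})\ge\sigma_r/2$, which comes from $\alpha>\tfrac{4}{\nu\sigma_r^2}$) yield that constant, and after dividing by $4\sqrt{\sigma_1}$ and balancing against the RIP error one lands exactly on the bracket in $\gamma$. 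No auxiliary Procrustes lemma is needed.

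Your Procrustes route is standard in the matrix-sensing literature and is a viable alternative, but two points deserve care. First, testing $\nabla F$ against $(U-\overline{U}R,\,V-\overline{V}R)$ does \emph{not} annihilate the $\mu$-cross terms $\mu\langle UE,U-\overline{U}R\rangle-\mu\langle VE,V-\overline{V}R\rangle$; these have to be absorbed, and you have not indicated how. Second, there is no reason a Procrustes computation would reproduce the specific constant $\tfrac{(\beta+\alpha)\sigma_r^4}{128\sqrt{\sigma_1^3}(4\sigma_1+\sigma_r)^2}$: that number is an artefact of the paper's SVD-block bookkeeping, so your assertion that you recover ``exactly the first entry of $\gamma$'' is not credible without carrying the calculation through.
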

 \begin{proof}
  Fix an arbitrary $(U,V)$ with $\|[U\!-\overline{U}\ \ V\!-\overline{V}]\|_F
  <\frac{\sqrt{\sigma_r}}{4}$ and $0<\Psi(U,V)-\Psi(\overline{U},\overline{V})<\frac{1}{2}$.
  Write $\Delta_U=U\!-\overline{U}$ and $\Delta_V=V\!-\overline{V}$.
  Then $\|[\Delta_{U}\ \Delta_{V}]\|_F<\frac{1}{4}\sqrt{\sigma_r}$.
  Let $J, J_{U}$ and $J_{V}$ be the index sets defined as in the proof of Lemma \ref{lemma41}.
  By Lemma \ref{subdiff-Psi},
 \begin{align}\label{dist-ineq41}
  {\rm dist}^2(0,\partial \Psi(U,V))
  &={\rm dist}^2(0,\partial_U \Psi(U,V))+{\rm dist}^2(0,\partial_V \Psi(U,V))\nonumber\\
  &=\nu^2\big[\|[\mathcal {A}^*\mathcal{A}(UV^{\mathbb{T}}-M)]V\|_F^2
     +\|U^{\mathbb{T}}[\mathcal {A}^*\mathcal {A}(UV^{\mathbb{T}}-M)]\|_F^2\big]\nonumber\\
  &\quad +\mu^2\big[\|U(U^{\mathbb{T}}U-V^{\mathbb{T}}V)\|_F^2+\|(V^{\mathbb{T}}V-U^{\mathbb{T}}U)V^{\mathbb{T}}\|_F^2\big].
 \end{align}
 Since $J_{U}=J$ by Lemma \ref{lemma41}, we have
 $\|U(U^{\mathbb{T}}U-V^{\mathbb{T}}V)\|_F^2=\|U_{J}(U_J^{\mathbb{T}}U_{J}-V_{J}^{\mathbb{T}}V_{J})\|_F^2$.
 Together with \eqref{mat-ineq}, we have
 $\|U(U^{\mathbb{T}}U-V^{\mathbb{T}}V)\|_F^2\ge\sigma_r^2(U_J)\|U_J^{\mathbb{T}}U_{J}-V_{J}^{\mathbb{T}}V_{J}\|_F^2$.
 Notice that $\sigma_r(U_{J})=\sigma_r(\overline{U}_{J}+[\Delta_U]_J)
 \ge\sigma_r(\overline{U}_{\!J})-\sigma_1(\Delta_U)>\sqrt{\sigma_r}-\frac{\sqrt{\sigma_r}}{4}>\sqrt{\sigma_r/2}$.
 Hence,
 \[
   \|U(U^{\mathbb{T}}U-V^{\mathbb{T}}V)\|_F^2\ge\frac{\sigma_r}{2}\|U_J^{\mathbb{T}}U_{J}-V_{J}^{\mathbb{T}}V_{J}\|_F^2
   =\frac{\sigma_r}{2}\|U^{\mathbb{T}}U-V^{\mathbb{T}}V\|_F^2.
 \]
 Following the same arguments, we also have
 $\|(V^{\mathbb{T}}V-U^{\mathbb{T}}U)V^{\mathbb{T}}\|_F^2
 \ge \frac{\sigma_r}{2}\|U^{\mathbb{T}}U-V^{\mathbb{T}}V\|_F^2$.
 By combining with these inequalities and the inequality \eqref{dist-ineq41},
 it follows that
 \begin{equation}\label{dist-ineq42}
  {\rm dist}^2(0,\partial \Psi(U,V))
  \ge \mu^2\sigma_r\|U^{\mathbb{T}}U-V^{\mathbb{T}}V\|_F^2.
 \end{equation}
  Next we proceed the arguments by two cases $UV^{\mathbb{T}}=M$ and
  $UV^{\mathbb{T}}\ne M$, respectively.

  \medskip
  \noindent
  {\bf Case 1:} $UV^{\mathbb{T}}=M$. In this case,
  $\Psi(U,V)-\Psi(\overline{U},\overline{V})=\frac{\mu}{4}\|U^{\mathbb{T}}U-V^{\mathbb{T}}V\|_F^2$.
  Together with the inequality \eqref{dist-ineq42},
  the desired inequality holds with $\gamma=4\mu\sigma_r$.

  \medskip
  \noindent
  {\bf Case 2:} $UV^{\mathbb{T}}\!\ne M$. Since $\|U\|\le\!\|\overline{U}\|+\!\|\Delta_U\|
  \le\!\frac{5}{4}\sqrt{\sigma_1}\!\le\!\sqrt{2\sigma_1}$ and $\|V\|\le\!\sqrt{2\sigma_1}$, we have
  \begin{align*}
   &\quad 2\sqrt{\sigma_1}\sqrt{\|[\mathcal {A}^*\mathcal{A}(UV^{\mathbb{T}}-M)]V\|_F^2
     +\|U^{\mathbb{T}}[\mathcal {A}^*\mathcal {A}(UV^{\mathbb{T}}-M)]\|_F^2}\\
   &\ge \sqrt{2\sigma_1}\big[\|[\mathcal {A}^*\mathcal{A}(UV^{\mathbb{T}}-M)]V\|_F
   +\|U^{\mathbb{T}}[\mathcal {A}^*\mathcal {A}(UV^{\mathbb{T}}-M)]\|_F\big]\\
   &\ge\big[\|[\mathcal {A}^*\mathcal{A}(UV^{\mathbb{T}}-M)]VV^{\mathbb{T}}\|_F
   +\|UU^{\mathbb{T}}[\mathcal {A}^*\mathcal {A}(UV^{\mathbb{T}}-M)]\|_F\big]\\
   &\ge\frac{1}{\|UV^{\mathbb{T}}-M\|_F}\langle[\mathcal{A}^*\mathcal{A}(UV^{\mathbb{T}}-M)]VV^{\mathbb{T}}, UV^{\mathbb{T}}-M\rangle\nonumber\\
   &\quad +\frac{1}{\|UV^{\mathbb{T}}-M\|_F}\langle UU^{\mathbb{T}}[\mathcal{A}^*\mathcal{A}(UV^{\mathbb{T}}-M)], UV^{\mathbb{T}}-M\rangle\nonumber\\
   &=\frac{1}{\|UV^{\mathbb{T}}-M\|_F}\langle\mathcal{A}(UV^{\mathbb{T}}-M),
      \mathcal{A}[(UV^{\mathbb{T}}-M)VV^{\mathbb{T}}]\rangle\nonumber\\
   &\quad +\frac{1}{\|UV^{\mathbb{T}}-M\|_F}\langle \mathcal {A}(UV^{\mathbb{T}}-M),
   \mathcal{A}[UU^{\mathbb{T}}(UV^{\mathbb{T}}-M)]\rangle.
  \end{align*}
  Notice that ${\rm rank}([UV^{\mathbb{T}}-M\ \ (UV^{\mathbb{T}}-M)VV^{\mathbb{T}}])\le 2r$.
  Applying Lemma \ref{RIPXY} to the two terms on the right hand side, we can obtain that
  \begin{align*}
   &\quad 2\sqrt{\sigma_1}\sqrt{\|[\mathcal {A}^*\mathcal{A}(UV^{\mathbb{T}}-M)]V\|_F^2
     +\|U^{\mathbb{T}}[\mathcal {A}^*\mathcal {A}(UV^{\mathbb{T}}-M)]\|_F^2}\\
   &\ge \frac{\beta+\alpha}{2\|UV^{\mathbb{T}}\!-M\|_F}\langle UV^{\mathbb{T}}\!-M,  (UV^{\mathbb{T}}\!-M)VV^{\mathbb{T}}\rangle-\frac{\beta-\alpha}{2}\|(UV^{\mathbb{T}}\!-M)VV^{\mathbb{T}}\|_F\nonumber\\
   &\quad + \frac{\beta+\alpha}{2\|UV^{\mathbb{T}}\!-M\|_F}\langle UV^{\mathbb{T}}\!-M,  UU^{\mathbb{T}}(UV^{\mathbb{T}}\!-M)\rangle-\frac{\beta-\alpha}{2}\|UU^{\mathbb{T}}(UV^{\mathbb{T}}\!-M)\|_F\nonumber\\
   &\ge\frac{(\beta+\alpha)[\|(UV^{\mathbb{T}}\!-M)V\|_F^2+\|U^{\mathbb{T}}(UV^{\mathbb{T}}\!-M)\|_F^2]}
       {2\|UV^{\mathbb{T}}\!-M\|_F}-2\sigma_1(\beta-\alpha)\|UV^{\mathbb{T}}\!-M\|_F
  \end{align*}
  where the last inequality is due to \eqref{mat-ineq}, $\|U\|\le\sqrt{2\sigma_1}$
  and $\|V\|\le\sqrt{2\sigma_1}$. Along with \eqref{dist-ineq41},
  \begin{align}\label{temp-ineq42}
   &\quad \frac{4\sqrt{\sigma_1}}{\nu(\beta+\alpha)}{\rm dist}(0,\partial \Psi(U,V))\nonumber\\
   &\ge \frac{\|(UV^{\mathbb{T}}\!-\!M)V\|_F^2+\|U^{\mathbb{T}}(UV^{\mathbb{T}}\!-\!M)\|_F^2}{\|UV^{\mathbb{T}}-M\|_F}
      -\frac{4\sigma_1(\beta-\alpha)}{\beta+\alpha}\|UV^{\mathbb{T}}\!-\!M\|_F.
  \end{align}
  To further deal with the right hand side of \eqref{temp-ineq42},
  we take $(P,Q)\in\mathbb{O}^{m,n}(M)$ and write
  \[
    \Sigma_{+}={\rm diag}(\sigma_1,\ldots,\sigma_r),\
    \widehat{U}=P^{\mathbb{T}}U,\,\widehat{V}=Q^{\mathbb{T}}V,\
    \widehat{U}_{\!J}=\left(\begin{matrix}
       \widehat{U}_{I\!J}\\ \widehat{U}_{\overline{I}\!J}
       \end{matrix}\right)\ {\rm and}\
    \widehat{V}_{J}=\left(\begin{matrix}
       \widehat{V}_{I\!J}\\ \widehat{V}_{\overline{I}\!J}
       \end{matrix}\right)
  \]
  where $\widehat{U}_{I\!J}\in\mathbb{R}^{r\times r}$ and
  $\widehat{U}_{\overline{I}\!J}\in\mathbb{R}^{(m-r)\times r}$
  are the matrix consisting of the first $r$ rows and
  the last $m-r$ rows of $\widehat{U}_{\!J}$, respectively,
  and similar is for $\widehat{V}_{I\!J}\in\mathbb{R}^{r\times r}$ and
  $\widehat{V}_{\overline{I}\!J}\in\mathbb{R}^{(m-r)\times r}$.
  Then, by using \eqref{mat-ineq}, $\|\widehat{U}\|=\|U\|\le\sqrt{2\sigma_1}$
  and $\|\widehat{V}\|=\|V\|\le\sqrt{2\sigma_1}$, we obtain
  \begin{align}\label{fl201}
  \|UV^{\mathbb{T}}-M\|_F
   &=\|\widehat{U}\widehat{V}^{\mathbb{T}}-{\rm Diag}(\sigma(M))\|_F
   =\left\|\left(\begin{matrix}
          \widehat{U}_{I\!J}\widehat{V}_{I\!J}^{\mathbb{T}}-\Sigma_+
            & \widehat{U}_{I\!J}\widehat{V}_{\overline{I}\!J}^{\mathbb{T}} \\
              \widehat{U}_{\overline{I}\!J}\widehat{V}_{I\!J}^{\mathbb{T}}
            & \widehat{U}_{\overline{I}\!J}\widehat{V}_{\overline{I}\!J}^{\mathbb{T}} \\
            \end{matrix}\right)\right\|_F\nonumber\\
  &\le \|\widehat{U}_{I\!J}\widehat{V}_{I\!J}^{\mathbb{T}}-\Sigma_+\|_F
       +\sqrt{2\sigma_1}\|\widehat{U}_{\overline{I}\!J}\|_F+\sqrt{2\sigma_1}\|\widehat{V}_{\overline{I}\!J}\|_F.
 \end{align}
 Notice that $\sigma_r(\widehat{V}_{J})=\sigma_r(V_{J})>\sqrt{\frac{\sigma_r}{2}}$
 and ${\rm rank}(\widehat{V}_{J})={\rm rank}(V_{J})=r$.
 Hence, it holds that
 \begin{align}\label{dfl201}
  \|(UV^{\mathbb{T}}\!-\!M)V\|_F
  &=\|(\widehat{U}\widehat{V}^{\mathbb{T}}\!-\!{\rm Diag}(\sigma(M)))\widehat{V}\|_F
  =\left\|\left(\begin{matrix}
   \!(\widehat{U}_{I\!J}\widehat{V}_{I\!J}^{\mathbb{T}}-\Sigma_+)\widehat{V}_{I\!J}
   +\widehat{U}_{I\!J}\widehat{V}_{\overline{I}\!J}^{\mathbb{T}}\widehat{V}_{\overline{I}\!J} \\
   \widehat{U}_{\overline{I}\!J}\widehat{V}_{J}^{\mathbb{T}}\widehat{V}_{J} \\
   \end{matrix}\right)\right\|_F\nonumber\\
 &\ge \frac{1}{\sqrt{2}}\big[\|(\widehat{U}_{I\!J}\widehat{V}_{I\!J}^{\mathbb{T}}-\Sigma_+)\widehat{V}_{I\!J}
   +\widehat{U}_{I\!J}\widehat{V}_{\overline{I}\!J}^{\mathbb{T}}\widehat{V}_{\overline{I}\!J}\|_F
   +\|\widehat{U}_{\overline{I}\!J}\widehat{V}_{J}^{\mathbb{T}}\widehat{V}_{J}\|_F\big]\nonumber\\
 &\ge \frac{\sigma_r}{2\sqrt{2}}\|\widehat{U}_{\overline{I}\!J}\|_F
     +\frac{1}{\sqrt{2}}\big\|(\widehat{U}_{I\!J}\widehat{V}_{I\!J}^{\mathbb{T}}-\Sigma_+)\widehat{V}_{I\!J}
   +\widehat{U}_{I\!J}\widehat{V}_{\overline{I}\!J}^{\mathbb{T}}\widehat{V}_{\overline{I}\!J}\big\|_F.
 \end{align}
 Since $\|\widehat{U}_{I\!J}\widehat{V}_{I\!J}^{\mathbb{T}}-\Sigma_+\|_F
 \le\|\widehat{U}\widehat{V}^{\mathbb{T}}-{\rm Diag}(\sigma(M))\|_F=\|UV^{\mathbb{T}}\!-M\|_F
 \le\frac{1}{\sqrt{\alpha}}\|\mathcal {A}(UV^{\mathbb{T}}-M)\|\le\frac{1}{\sqrt{\alpha\nu}}$
 where the last inequality is due to $\Psi(U,V)-\Psi(\overline{U},\overline{V})<1/2$,
 using $\alpha\ge\frac{4}{\nu\sigma_r^2}$ then yields
 $\|\widehat{U}_{I\!J}\widehat{V}_{I\!J}^{\mathbb{T}}\!-\Sigma_+\|_F\le\frac{\sigma_r}{2}$,
 which in turn implies that $\sigma_r(\widehat{U}_{I\!J}\widehat{V}_{I\!J}^{\mathbb{T}})\ge\frac{\sigma_r}{2}$.
 Consequently,
 \begin{align}\label{dfl202}
  \|(UV^{\mathbb{T}}\!-M)V\|_F
  &\ge\frac{\sigma_r}{2\sqrt{2}}\|\widehat{U}_{\overline{I}\!J}\|_F
     +\frac{1}{\sqrt{2}}\big\|(\widehat{U}_{I\!J}\widehat{V}_{I\!J}^{\mathbb{T}}-\Sigma_+)\widehat{V}_{I\!J}\big\|_F
     -\frac{1}{\sqrt{2}}\big\|\widehat{U}_{I\!J}\widehat{V}_{\overline{I}\!J}^{\mathbb{T}}\widehat{V}_{\overline{I}\!J}\big\|_F\nonumber\\
 &\ge \frac{\sigma_r}{2\sqrt{2}}\|\widehat{U}_{\overline{I}\!J}\|_F
      +\frac{1}{\sqrt{2}\|\widehat{U}_{I\!J}\|}\big\|(\widehat{U}_{I\!J}\widehat{V}_{I\!J}^{\mathbb{T}}-\Sigma_+)
      \widehat{V}_{I\!J}\widehat{U}_{I\!J}^{\mathbb{T}}\|_F
      -\!\frac{\|\widehat{U}_{I\!J}\widehat{V}_{\overline{I}\!J}^{\mathbb{T}}\widehat{V}_{\overline{I}\!J}\|_F}{\sqrt{2}}\nonumber\\
 &\ge \frac{\sigma_r}{2\sqrt{2}}\|\widehat{U}_{\overline{I}\!J}\|_F +\frac{\sigma_r}{2\sqrt{2\sigma_1}}\|\widehat{U}_{I\!J}\widehat{V}_{I\!J}^{\mathbb{T}}-\Sigma_+\|_F
 -\sqrt{2}\sigma_1\|\widehat{V}_{\overline{I}\!J}^{\mathbb{T}}\|_F
 \end{align}
 where the last inequality is also using $\|\widehat{U}_{I\!J}\|\le\sqrt{2\sigma_1}$
 and $\|\widehat{V}_{\overline{I}\!J}\|\le\sqrt{2\sigma_1}$. Similarly,
 \begin{subequations}
  \begin{align}\label{dfl203}
   \|U^{\mathbb{T}}(UV^{\mathbb{T}}\!-M)\|_F
   &\ge \frac{\sigma_r}{2\sqrt{2}}\|\widehat{V}_{\overline{I}\!J}\|_F
        +\frac{1}{\sqrt{2}}\big\|\widehat{U}_{I\!J}^{\mathbb{T}}(\widehat{U}_{I\!J}\widehat{V}_{I\!J}^{\mathbb{T}}-\Sigma_+)
         +\widehat{U}_{\overline{I}\!J}^{\mathbb{T}}\widehat{U}_{\overline{I}\!J}\widehat{V}_{I\!J}^{\mathbb{T}}\big\|_F,\\
   \label{dfl204}
  \|U^{\mathbb{T}}(UV^{\mathbb{T}}\!-M)\|_F
  &\ge\frac{\sigma_r}{2\sqrt{2}}\|\widehat{V}_{\overline{I}\!J}\|_F
      +\frac{\sigma_r}{2\sqrt{2\sigma_1}}\|\widehat{U}_{I\!J}\widehat{V}_{I\!J}^{\mathbb{T}}-\Sigma_+\|_F
      -\sqrt{2}\sigma_1\|\widehat{U}_{\overline{I}\!J}\|_F.
  \end{align}
 \end{subequations}
  From \eqref{dfl202}, we have
  $\|\widehat{U}_{\overline{I}\!J}\|_F \le\frac{2\sqrt{2}}{\sigma_r}\|(UV^{\mathbb{T}}\!-M)V\|_F$.
  Together with \eqref{dfl204}, it follows that
 \begin{align*}
  &\Big(\frac{4\sigma_1}{\sigma_r}+1\Big)\big[\|U^{\mathbb{T}}(UV^{\mathbb{T}}\!-M)\|_F + \|(UV^{\mathbb{T}}\!-M)V\|_F\big]\nonumber\\
  &\ge\frac{\sigma_r}{2\sqrt{2}}\|\widehat{V}_{\overline{I}\!J}\|_F
      +\frac{\sigma_r}{2\sqrt{2\sigma_1}}\|\widehat{U}_{I\!J}\widehat{V}_{I\!J}^{\mathbb{T}}-\Sigma_+\|_F
      +\frac{\sigma_r}{2\sqrt{2}}\|\widehat{U}_{\overline{I}\!J}\|_F\nonumber\\
  &\ge\frac{\sigma_r}{4\sqrt{\sigma_1}}\|UV^{\mathbb{T}}-M\|_F,
 \end{align*}
 where the last inequality is due to \eqref{fl201}. From this, we immediately obtain that
 \begin{align}\label{UVM}
  &\|U^{\mathbb{T}}(UV^{\mathbb{T}}-M)\|_F^2+\|(UV^{\mathbb{T}}-M)V\|_F^2\nonumber\\
  &\ge\frac{1}{2}\big[\|U^{\mathbb{T}}(UV^{\mathbb{T}}-M)\|_F+ \|(UV^{\mathbb{T}}-M)V\|_F\big]^2\nonumber\\
  &\ge\frac{\sigma_r^4}{32\sigma_1(4\sigma_1+\sigma_r)^2}\|UV^{\mathbb{T}}-M\|_F^2.
 \end{align}
 Combining the last inequality and \eqref{temp-ineq42} and
 using $\frac{1}{\sqrt{\beta}}\|\mathcal{A}(UV^{\mathbb{T}}\!-M)\|\!\leq\!\|UV^{\mathbb{T}}\!-\!M\|_F$
 gives
 \begin{align*}
  \frac{4\sqrt{\sigma_1}}{\nu(\beta+\alpha)}{\rm dist}(0,\partial \Psi(U,V))
  &\ge\Big[\frac{\sigma_r^4}{32\sigma_1(4\sigma_1+\sigma_r)^2}-\frac{4\sigma_1(\beta-\alpha)}{\beta+\alpha}\Big]\|UV^{\mathbb{T}}-M\|_F\nonumber\\
  &\ge\Big[\frac{\sigma_r^4}{32\sigma_1(4\sigma_1+\sigma_r)^2}-\frac{4\sigma_1(\beta-\alpha)}{\beta+\alpha}\Big]
       \frac{1}{\sqrt{\beta}}\|\mathcal{A}(UV^{\mathbb{T}}\!-M)\|
 \end{align*}
 where the last inequality is using $\frac{\beta}{\alpha}<\frac{128\sigma_1^2(4\sigma_1+\sigma_r)^2+\sigma_r^4}
 {128\sigma_1^2(4\sigma_1+\sigma_r)^2-\sigma_r^4}$. Together with \eqref{dist-ineq42},
 we have
 \begin{align*}
  {\rm dist}^2(0,\partial \Psi(U,V))
  &\ge\frac{\nu^2}{2\beta}\Big[\frac{(\beta+\alpha)\sigma_r^4}{128\sigma_1\sqrt{\sigma_1}(4\sigma_1+\sigma_r)^2}
       -\sqrt{\sigma_1}(\beta-\alpha)\Big]^2
   \|\mathcal{A}(UV^{\mathbb{T}}\!-M)\|^2\\
  &\quad +\frac{\mu^2\sigma_r}{2}\|U^{\mathbb{T}}U-V^{\mathbb{T}}V\|_F^2\\
  &\ge\gamma\Big[\frac{\nu}{2}\|\mathcal{A}(UV^{\mathbb{T}}\!-M)\|^2
  +\frac{\mu}{4}\|U^{\mathbb{T}}U-V^{\mathbb{T}}V\|_F^2\Big].
 \end{align*}
 Notice that $\Psi(U,V)-\Psi(\overline{U},\overline{V})=\frac{\nu}{2}\|\mathcal{A}(UV^{\mathbb{T}}\!-M)\|^2
  +\frac{\mu}{4}\|U^{\mathbb{T}}U-V^{\mathbb{T}}V\|_F^2$. Together with the result of Case 1,
  we obtain the desired conclusion.
 \end{proof}
 \begin{remark}
  Theorem \ref{KLL20} establishes the KL property of exponent $1/2$ of
  the function $\Psi$ over the set of global minimizers under a suitable
  assumption on the $2r$-restricted condition number of $\mathcal{A}^*\mathcal{A}$.
  Along with Proposition \ref{MW-set}, under the assumptions of Theorem \ref{KLL20}
  one may seek the unique global optimal solution $M$ of rank not more than $\kappa$
  in a linear rate when solving the problem \eqref{prob-balance} with a starting
  point not far from the set $\mathcal{W}^*$.
%
 \end{remark}

  Inspired by the result of Theorem \ref{KLL20}, it is natural to
  ask whether the function $\Psi$ has the KL property of exponent $1/2$
  in the set of critical points. The following example shows that $\Psi$
  does not have the KL property of exponent $1/2$ at those critical points
  for which the number of nonzero columns is greater than the rank of $M$.
  \begin{example}\label{example41}
   Take $\mathcal{A}(X)={\rm vec}(X)$ for $X\in\mathbb{R}^{4\times 4}$,
   where ${\rm vec}(X)$ represents the vector obtained by arranging $X$
   in terms of its columns, $M=4E$ with $E\in\mathbb{R}^{4\times 4}$
   being a matrix of all ones, and $\kappa=4$. Now the problem \eqref{prob-balance}
   is specified as follows
   \begin{align*}
    \min_{U,V\in\mathbb{R}^{4\times 4}}
    \left\{\Psi(U,V):=\frac{\nu}{2}\big\|UV^{\mathbb{T}}\!-M\big\|_F^2
    +\frac{\mu}{4}\|U^{\mathbb{T}}U\!-V^{\mathbb{T}}V\|_F^2
     +\frac{1}{2}(\|U\|_{2,0}+\|V\|_{2,0})\right\}.
   \end{align*}
   Consider $\overline{U}=\overline{V}=E$. It is easy to check that
   $(\overline{U}, \overline{V})$ is a regular critical point of $\Psi$.
   For any $t\in(0,1)$, define $U(t)=V(t):=\overline{U}+t\Delta$ with
   $\Delta\in\mathbb{R}^{4\times 4}$ defined as follows
   \[
     \Delta=\left[\begin{matrix}
                    -E+2I & 0_{2\times 2}\\
                   0_{2\times 2}& 0_{2\times 2}
                 \end{matrix}\right].
   \]
   We calculate that  $\Psi(U(t),V(t))-\Psi(\overline{U},\overline{V})=8\nu t^4$
   and ${\rm dist}(0,\partial\Psi(U(t),V(t)))=8\sqrt{2}\nu t^3$.
   This shows that $\Psi$ can not have the KL property of exponent $1/2$
   at those critical points with the number of nonzero columns greater than $r$.
   Notice that $(\overline{U},\overline{V})$ is also a global minimizer of
   the function $(U,V)\mapsto\frac{\nu}{2}\big\|UV^{\mathbb{T}}\!-M\big\|_F^2
   +\frac{\mu}{4}\|U^{\mathbb{T}}U\!-V^{\mathbb{T}}V\|_F^2$.
   Then, the above arguments imply that this function may not have
   the KL property of exponent $1/2$ over the set of its global minimizers.
  \end{example}
 \subsection{KL property of exponent $1/2$ of $\Theta_{\rho}$}\label{sec4.2}

  In this part, we establish the KL property of exponent $1/2$ for the function
  $\Theta_{\rho}$ over its global minimizer set, where $\Theta_{\rho}$ is
  defined by \eqref{prob-DC} with $\theta$ satisfying Assumption \ref{assump} below.
  Such an assumption is mild and the functions in \cite[Example 3-5]{LiuBP18}
  all satisfy it.
 \begin{assumption}\label{assump}
  The $\theta(t):=|t|-\psi^*(|t|)$ with $\phi\in\!\mathscr{L}$ is concave
  and nondecreasing on $[0,+\infty)$, and there exist constants
  $\varpi>0$ and $c>0$ such that $\theta'(t)\ge c$ for all $t\in(0,\varpi)$.
 \end{assumption}

 To achieve the KL property of exponent of such $\Theta_{\rho}$,
 we need the following lemma. Since its proof is similar to that of
 Lemma \ref{subdiff-Psi} by Lemma \ref{subdiff-L2norm2}, we here omit it.
 \begin{lemma}\label{subdiff-Thetarho}
  Let $\Theta_{\rho}$ be the function in equation \eqref{prob-DC} associated to $\rho>0$.
  Fix an arbitrary $(U,V)\in\mathbb{R}^{m\times\kappa}\times\mathbb{R}^{n\times\kappa}$.
  Write $J_U\!:=\{j\ |\ U_j\ne 0\},J_V\!:=\{j\ |\ V_j\ne 0\},\overline{J}_U=\{1,\ldots,\kappa\}\backslash J_U$
  and $\overline{J}_V=\{1,\ldots,\kappa\}\backslash J_V$.
  Then, $\partial\Theta_{\rho}(U,V)\!=\partial_U\Theta_{\rho}(U,V)\times\partial_V\Theta_{\rho}(U,V)$ with
  \begin{subequations}
  \begin{align*}\label{subdiffU-Thetarho}
  \partial_U\Theta_{\rho}(U,V)&\subseteq\!\bigg\{G\in\mathbb{R}^{m\times \kappa}|\ G_j\!=\!\nu[\mathcal {A}^*\mathcal {A}(UV^{\mathbb{T}}\!-\!M)]V_j+\!\mu U(U^{\mathbb{T}}U_j\!-V^{\mathbb{T}}V_j)+S_j,\\
    &\qquad\left. S_{j}=\frac{\theta'(\|U_{j}\|)U_j}{2\|U_{j}\|}\ {\rm for}\ j\in J_U,
            S_{j}\in \frac{\theta'(\|U_{j}\|)}{2}D^*g(U_{j})\ {\rm for}\ j\in\overline{J}_U\right\},\\
  \partial_V\Theta_{\rho}(U,V)&\subseteq\bigg\{H\in \mathbb{R}^{n\times\kappa}|\ H_{j}\!=\!\nu[\mathcal{A}^*\mathcal {A}(UV^{\mathbb{T}}\!-\!M)]^{\mathbb{T}}U_{j}
   +\!\mu V(V^{\mathbb{T}}V_{j}\!-\!U^{\mathbb{T}}U_{j})+T_{j},\\
   &\qquad\left. T_{j}=\frac{\theta'(\|V_{j}\|)V_{j}}{2\|V_{j}\|}\ {\rm for}\ j\in J_V,
     T_{j}\in \frac{\theta'(\|V_{j}\|)}{2} D^*g(V_{j})\ {\rm for}\ j\in\overline{J}_V\right\}
 \end{align*}
 \end{subequations}
 where $g(z)=\|z\|$ for $z\in\mathbb{R}^{\kappa}$, and $D^*g(U_{j})$
 is the coderivative of $g$ at $U_{j}$.
 \end{lemma}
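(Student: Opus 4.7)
The plan is to mirror the proof of Lemma~\ref{subdiff-Psi} but replace the indicator-type subdifferential of $\|\cdot\|_{2,0}$ with the composite subdifferential supplied by Lemma~\ref{subdiff-L2norm2}. First I would write $\Theta_{\rho}=F+h$ where $F$ is the smooth part of \eqref{FUV} and
\[
 h(U,V):=\tfrac{1}{2}\sum_{j=1}^{\kappa}\bigl[\vartheta(\|U_{j}\|)+\vartheta(\|V_{j}\|)\bigr],
 \qquad \vartheta(t):=\theta(\rho t).
\]
Since $\psi^{*}$ is proper lsc convex with $[0,1]\subseteq{\rm int}({\rm dom}\,\psi)$, its conjugate is finite-valued and hence locally Lipschitz on $\mathbb{R}$; thus $\theta$ and $\vartheta$ are locally Lipschitz. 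This is the hypothesis needed to invoke Lemma~\ref{subdiff-L2norm2}.

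Next I would apply the sum rule for a locally Lipschitz perturbation of a $C^{1}$ function (\cite[Exercise~8.8(c)]{RW98}) to get $\partial\Theta_{\rho}(U,V)=\nabla F(U,V)+\partial h(U,V)$, and then use the separable-sum rule \cite[Proposition~10.5]{RW98} to split $\partial h$ into the product of per-column subdifferentials
\[
 \partial h(U,V)=\prod_{j=1}^{\kappa}\partial\bigl[\tfrac{1}{2}\vartheta(\|\cdot\|)\bigr](U_{j})
 \,\times\,\prod_{j=1}^{\kappa}\partial\bigl[\tfrac{1}{2}\vartheta(\|\cdot\|)\bigr](V_{j}).
\]
The explicit gradient calculation of $F$ reproduces the terms $\nu[\mathcal{A}^{*}\mathcal{A}(UV^{\mathbb{T}}\!-\!M)]V_{j}+\mu U(U^{\mathbb{T}}U_{j}-V^{\mathbb{T}}V_{j})$ column-wise, exactly as in Lemma~\ref{subdiff-Psi}.

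It then remains to identify the ``$S_{j}$'' and ``$T_{j}$'' contributions. Fix a column $U_{j}$. If $U_{j}\neq 0$, the first alternative of Lemma~\ref{subdiff-L2norm2} applies and yields the singleton $\tfrac{1}{2}\partial\vartheta(\|U_{j}\|)\cdot U_{j}/\|U_{j}\|$, which (using that $\vartheta$ is differentiable on $(0,\infty)$ away from the DC kink) collapses to the stated formula with $\theta'$. If $U_{j}=0$, the second alternative of Lemma~\ref{subdiff-L2norm2} gives only the inclusion $\partial[\tfrac{1}{2}\vartheta(\|\cdot\|)](0)\subseteq \tfrac{1}{2}D^{*}g(0)\partial\vartheta(0)$, i.e.\ $\{S_{j}: (S_{j},\tau)\in\mathcal{K}\text{ for some }\tau\in\tfrac{1}{2}\partial\vartheta(0)\}$, which explains why the lemma is stated with $\subseteq$ and not equality. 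An analogous argument handles $V_{j}$, and combining everything through the sum/separability decompositions gives the advertised description of $\partial_{U}\Theta_{\rho}$ and $\partial_{V}\Theta_{\rho}$.

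The only delicate step is the one at $U_{j}=0$: because $\theta$ is concave (Assumption~\ref{assump}), $\vartheta$ is concave too, so $\partial\vartheta(0)$ generally contains negative values, and as the remark following Lemma~\ref{subdiff-L2norm2} warns, regularity of $\vartheta(\|\cdot\|)$ at the origin fails; this is precisely what forces the statement to be an inclusion through the coderivative $D^{*}g$ rather than an equality. Everything else is routine bookkeeping parallel to the proof of Lemma~\ref{subdiff-Psi}.
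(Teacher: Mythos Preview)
Your proposal is correct and follows exactly the route the paper indicates: the paper omits the proof entirely, stating only that it ``is similar to that of Lemma~\ref{subdiff-Psi} by Lemma~\ref{subdiff-L2norm2}'', which is precisely the decomposition $\Theta_{\rho}=F+h$, sum rule via \cite[Exercise~8.8(c)]{RW98}, column-separability via \cite[Proposition~10.5]{RW98}, and then Lemma~\ref{subdiff-L2norm2} on each column that you outline. Your observation that the inclusion (rather than equality) at $U_j=0$ is forced by the failure of regularity noted after Lemma~\ref{subdiff-L2norm2} is also the correct explanation.
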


 \begin{theorem}\label{KL-Thetarho}
  Let $\Theta_{\rho}$ be the function in \eqref{prob-DC} associated to
  $\rho>\overline{\rho}$ with $\theta$ satisfying Assumption \ref{assump},
  and let $\alpha$ and $\beta$ be the $2r$-restricted smallest and largest eigenvalues
  of $\mathcal{A}^*\mathcal{A}$, respectively. Suppose that
  $\mathcal{X}^*\!\cap\Omega_{\kappa}\ne\emptyset$ and
  $\frac{\beta}{\alpha}<\frac{128\sigma_1^2(4\sigma_1+\sigma_r)^2+\sigma_r^4}
  {128\sigma_1^2(4\sigma_1+\sigma_r)^2-\sigma_r^4}$ with $\alpha\!>\frac{4}{\nu\sigma_r^2}$.
  Fix an arbitrary $(\overline{U},\overline{V})\in \mathcal{W}^*$.
  Then, for any $(U,V)$ with $0<\Theta_{\rho}(U,V)-\Theta_{\rho}(\overline{U},\overline{V})<\frac{1}{2}$
  and $\|[U\!-\overline{U}\ \ V\!-\overline{V}]\|_F<\min(\frac{\sqrt{\sigma_r}}{4},
  \frac{\varpi}{\rho},\frac{c\rho}{4\sqrt{\nu}\|\mathcal {A}\|+16\mu\sigma_1})$,
  the following inequality
  \[
    {\rm dist}^2(0,\partial\Theta_{\rho}(U,V))
    \ge\gamma'(\Theta_{\rho}(U,V)-\Theta_{\rho}(\overline{U},\overline{V}))
  \]
 holds with $\gamma'=\min\big(\frac{\nu}{\beta}\big[\frac{(\beta+\alpha)\sigma_r^4}{128\sqrt{\sigma_1^3}(4\sigma_1+\sigma_r)^2}
       -\!\sqrt{\sigma_1}(\beta-\alpha)\big]^2,2\mu\sigma_r,\frac{32}{c^2\rho^2}\big)$.
 \end{theorem}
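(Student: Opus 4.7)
The plan is to parallel the proof of Theorem \ref{KLL20}, with the main additional task being to control the new $\theta'$-contributions that appear in $\partial\Theta_{\rho}$ (via Lemma \ref{subdiff-Thetarho}) as against $\partial\Psi$.

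First I would show that on the prescribed neighborhood the nonzero column supports satisfy $J_U\cap J = J_V\cap J = J$, where $J:=\{j:\overline{U}_j\neq 0\}$, and crucially that $\theta'(\rho\|U_j\|)=\theta'(\rho\|V_j\|)=0$ for every $j\in J$. The inclusion $J\subseteq J_U\cap J_V$ follows from the lower bound $\|\overline{U}_j\|\geq\sqrt{\sigma_r}$ (exactly as in the proof of Lemma \ref{lemma41}) and the perturbation bound $\sqrt{\sigma_r}/4$. The vanishing of $\theta'$ on $J$ will come from the global exact penalty Corollary \ref{corollary1-Epenalty}: since $\Theta_{\rho}(\overline{U},\overline{V})=\Psi(\overline{U},\overline{V})=r$ and $F(\overline{U},\overline{V})=0$, the identity $\tfrac12\sum_{j\in J}[\theta(\rho\|\overline{U}_j\|)+\theta(\rho\|\overline{V}_j\|)]=r$, combined with $\theta\leq 1$, forces each term to equal $1$, so $\rho\|\overline{U}_j\|$ lies strictly inside the saturation region of $\theta$; by continuity and the proximity bound, $\rho\|U_j\|$ stays inside that region as well.

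Next I would lower bound $\mathrm{dist}^2(0,\partial\Theta_{\rho}(U,V))$ by splitting the columns into $j\in J$ and $j\notin J$. For $j\in J$ the $S_j$-term vanishes, so the $j$-th block of $\partial\Theta_{\rho}$ has exactly the structure of $\partial\Psi$; summing over $j\in J$ and reproducing the chain of inequalities in Theorem \ref{KLL20} (using $\sigma_r(U_J)>\sqrt{\sigma_r/2}$, $\|U\|,\|V\|\leq\sqrt{2\sigma_1}$, the singular value decomposition of $M$, Lemma \ref{RIPXY}, and the restricted-condition-number assumption) yields a bound of the form $\text{(partial)}\,\mathrm{dist}^2 \geq \gamma\,F(U,V)$, with $\gamma=\min\bigl(\tfrac{\nu}{\beta}[\cdots]^2,\,2\mu\sigma_r\bigr)$ exactly the constant of Theorem \ref{KLL20}. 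For $j\in J_U\setminus J$ (and symmetrically $J_V\setminus J$), Assumption \ref{assump} together with $\rho\|U_j\|\in(0,\varpi)$ (ensured by $\|[U-\overline{U}\ V-\overline{V}]\|_F<\varpi/\rho$) gives $\|S_j\|\geq c\rho/2$, while the ambient $F$-gradient contribution at such $j$ is dominated using $\|U\|,\|V\|\leq\sqrt{2\sigma_1}$, $\|UV^{\mathbb{T}}-M\|_F\leq(\alpha\nu)^{-1/2}$, and $\|U_j\|,\|V_j\|\leq\|[U-\overline{U}\ V-\overline{V}]\|_F$. The perturbation bound $c\rho/(4\sqrt{\nu}\|\mathcal{A}\|+16\mu\sigma_1)$ is calibrated exactly so that these terms are absorbed into $c\rho/4$, giving $\|G_j\|\geq c\rho/4$.

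Finally I would tie these pieces together through the value gap. Saturation on $J$ gives
\[
 \Theta_{\rho}(U,V)-r \;=\; F(U,V)+\tfrac{1}{2}\!\!\sum_{j\in J_U\setminus J}\!\theta(\rho\|U_j\|)+\tfrac{1}{2}\!\!\sum_{j\in J_V\setminus J}\!\theta(\rho\|V_j\|) \;\leq\; F(U,V)+\tfrac{1}{2}\bigl(|J_U\setminus J|+|J_V\setminus J|\bigr),
\]
so combining the two column-wise lower bounds produces
\[
 \mathrm{dist}^2(0,\partial\Theta_{\rho}(U,V))\;\geq\;\gamma\,F(U,V)+\tfrac{c^2\rho^2}{16}\bigl(|J_U\setminus J|+|J_V\setminus J|\bigr)\;\geq\;\min\!\Bigl(\gamma,\tfrac{c^2\rho^2}{8}\Bigr)\bigl(\Theta_{\rho}(U,V)-r\bigr),
\]
which is the KL inequality with the stated constant $\gamma'$. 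The main obstacle is the first step: proving that $\theta'(\rho\|U_j\|)=0$ for every $j\in J$ at nearby $(U,V)$. Without this, the $S_j$-terms for $j\in J$ could interact destructively with the $F$-gradient contribution and destroy the clean reduction to Theorem \ref{KLL20}. Once saturation on $J$ is secured, the extra columns in $J_U\setminus J$ and $J_V\setminus J$ only help the KL estimate, and the rest is essentially bookkeeping on top of the analysis already carried out for $\Psi$.
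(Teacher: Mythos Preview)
Your plan correctly isolates the two key ingredients---saturation of $\theta$ on the support $J=\{j:\overline U_j\ne 0\}$ and the $c\rho/4$ lower bound coming from any extra nonzero column---and these are precisely what the paper uses. The difference is in how they are combined, and your unified combination has a gap.

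You claim that summing the $J$-indexed gradient blocks and ``reproducing the chain of inequalities in Theorem~\ref{KLL20}'' yields $\text{(partial)}\,\mathrm{dist}^2\ge\gamma\,F(U,V)$. But the proof of Theorem~\ref{KLL20} tacitly uses $J=J_U=J_V$: the block identity for $UV^{\mathbb T}-M$ in \eqref{fl201}--\eqref{dfl204} relies on $\widehat U\widehat V^{\mathbb T}=\widehat U_J\widehat V_J^{\mathbb T}$, and the balanced-term estimate uses $\|U(U^{\mathbb T}U-V^{\mathbb T}V)\|_F=\|U_J(U_J^{\mathbb T}U_J-V_J^{\mathbb T}V_J)\|_F$. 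Both fail once $J\subsetneq J_U$ or $J\subsetneq J_V$: the extra columns contribute to $UV^{\mathbb T}$ and to $U^{\mathbb T}U-V^{\mathbb T}V$, so the partial sum over $J$ controls at best a quantity built from $U_J,V_J$, not $F(U,V)$. Your final inequality $\mathrm{dist}^2\ge\gamma F(U,V)+\tfrac{c^2\rho^2}{16}(|J_U\setminus J|+|J_V\setminus J|)$ therefore does not follow from the ingredients you list.

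The paper sidesteps this by a case split rather than a unified estimate. If $J\ne J_U$ (or $J\ne J_V$), a single extra column already gives $\mathrm{dist}^2\ge c^2\rho^2/16$; combined with the crude bound $\Theta_\rho(U,V)-\Theta_\rho(\overline U,\overline V)<\tfrac12$ this alone delivers the KL inequality, with no need to control $F(U,V)$ in this branch. If $J=J_U=J_V$, then by saturation the $S_j,T_j$ contributions vanish on every nonzero column, the subdifferential of $\Theta_\rho$ coincides with that of $\Psi$, the value gap equals $F(U,V)$, and Theorem~\ref{KLL20} applies verbatim. Your saturation and extra-column estimates are essentially correct as stated (though in this setting the exact penalty comes from Proposition~\ref{Epenalty-LQ}, not Corollary~\ref{corollary1-Epenalty}, since $\widetilde f$ is not coercive, and the paper secures \emph{strict} interiority $\rho\|U_j\|>\phi'_-(1)$ directly from $\rho>\overline\rho$ rather than by a continuity argument); only the recombination step needs to be replaced by this dichotomy.
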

 \begin{proof}
  Fix an arbitrary $(U,V)$ with
  $\|[U-\overline{U}\ \ V-\overline{V}]\|_F<\min(\frac{\sqrt{\sigma_r}}{4},
  \frac{\varpi}{\rho},\frac{c\rho}{4\sqrt{\nu}\|\mathcal {A}\|+16\mu\sigma_1})$
  and $0<\Theta_{\rho}(U,V)-\Theta_{\rho}(\overline{U},\overline{V})<\frac{1}{2}$.
  Write $J\!:=\big\{j\ |\ \overline{U}_j\ne0\big\},\Delta_U=U-\overline{U}$
  and $\Delta_V=V-\overline{V}$.
  By Proposition \ref{Epenalty-LQ}, it follows that $\mathcal{W}^*$ is exactly
  the global minimizer set of $\Theta_{\rho}$. Using the same arguments as those
  for Lemma \ref{lemma41}, we have $\|U_{j}\|\ge \frac{1}{2}\sqrt{\sigma_r}$
  and $\|V_{j}\|\ge \frac{1}{2}\sqrt{\sigma_r}$ for each $j\in J$,
  and $J\subseteq J_{U}$ and $J\subseteq J_{V}$. Moreover, from
  $\|[U-\overline{U}\ \ V-\overline{V}]\|_F<\frac{\sqrt{\sigma_r}}{4}$,
  it follows that $\|U\|\le \sqrt{2\sigma_1}$ and $\|V\|\le \sqrt{2\sigma_1}$.
  In addition, for each $j\in J$ it holds that
  \begin{equation}\label{varphij}
    \theta(\rho\|U_{j}\|)=\theta(\rho\|V_{j}\|)=1\ \ {\rm and}\ \
    \theta'(\rho\|U_{j}\|)=\theta'(\rho\|V_{j}\|)=0.
  \end{equation}
  Indeed, by the definition of $\overline{\rho}$ in Proposition \ref{Epenalty-LQ},
  it is immediate to have
  \[
    \rho>\overline{\rho}\ge\frac{4\|\mathcal {A}\|\sqrt{\kappa}}{\sqrt{\alpha}\sigma_r-\sqrt{2\nu^{-1}}}\phi_-'(1)
    \ge\frac{4\|\mathcal {A}\|\sqrt{\kappa}\phi_-'(1)}{\sqrt{\alpha}\sigma_r-\sqrt{2\nu^{-1}}}
    \cdot\frac{\phi_-'(1)}{\rho}
  \]
  where the last inequality is by $\rho>\phi_{-}'(1)$. So,
  $\rho^2\sigma_r\ge\rho^2\big(\sigma_r-\!\sqrt{\frac{2}{\nu\alpha}}\big)
  \ge\frac{4\|\mathcal {A}\|\sqrt{\kappa}[\phi_-'(1)]^2}{\sqrt{\alpha}}$.
  Since $\|A\|\ge\sqrt{\beta}\ge\sqrt{\alpha}$, we have $\frac{1}{2}\rho\sqrt{\sigma_r}>\phi_{-}'(1)$.
  Together with $\|U_{j}\|\ge \frac{1}{2}\sqrt{\sigma_r}$ for each $j\in J$,
  we have $\rho\|U_{j}\|>\phi_{-}'(1)$ for each $j\in J$.
  Similarly, $\rho\|V_{j}\|>\phi_{-}'(1)$ for each $j\in J$.
  By the definition of $\theta$ and $\theta^*$, we calculate that
  $\theta(\rho\|U_{j}\|)=\theta(\rho\|V_{j}\|)=\psi(1)=1$
  for each $j\in J$. In fact, by noting that $\theta$
  is locally Lipschitz everywhere since $\psi^*$ is locally Lipschitz
  everywhere by ${\rm dom}\psi^*=\mathbb{R}$ and its convexity,
  there exists a neighborhood of $\rho\|U_{\cdot j}\|$ in which
  $\theta(t)\equiv 1$, which implies that
  $\theta'(\rho\|U_{j}\|)=0$ for each $j\in J$. Similarly,
  $\theta'(\rho\|V_{j}\|)=0$ for each $j\in J$. Thus,
  the equalities in \eqref{varphij} hold.
  Note that $0<\Theta_{\rho}(U,V)-\Theta_{\rho}(\overline{U},\overline{V})<\frac{1}{2}$
  and $\Theta_{\rho}(\overline{U},\overline{V})=r$.
  By the expression of $\Theta_{\rho}(U,V)$ and the first equality
  in \eqref{varphij}, we have $\frac{\nu}{2}\|\mathcal {A}(UV^{\mathbb{T}}-M)\|^2\le\frac{1}{2}$.
  Now we proceed the arguments by three cases.

  \medskip
  \noindent
  {\bf Case 1:} $J\ne J_{U}$. Now $U_{j}=[\Delta_U]_{j}$
  and $V_{j}=[\Delta_V]_{j}$ for each $j\in J_U\backslash J$.
  By using  Assumption \ref{assump}, $\|\mathcal{A}(UV^{\mathbb{T}}\!-M)\|\le1/\sqrt{\nu}$
  and $\max(\|U\|,\|V\|)\le\sqrt{2\sigma_1}$, for each $j\in J_U\backslash J$,
  \begin{align*}
   &\big\|\nu[\mathcal {A}^*\mathcal{A}(UV^{\mathbb{T}}-M)]V_{j}
   +\mu U(U^{\mathbb{T}}U_{j}-V^{\mathbb{T}}V_{j})
   +\frac{\rho U_{j}}{2\|U_{j}\|}\theta'(\rho\|U_{j}\|)\big\|\\
   &\ge \frac{1}{2}c\rho -\sqrt{\nu}\|\mathcal {A}^*\|\|V_{j}\|
        -2\mu\sigma_1(\|U_{j}\|+\|V_{j}\|)\\
   &\ge \frac{1}{2}c\rho-(\sqrt{\nu}\|\mathcal {A}\|+4\mu\sigma_1)
        \|[\Delta_U\ \ \Delta_V]\|_F
   \ge\frac{c\rho}{2}-\frac{c\rho}{4}=\frac{c\rho}{4}
  \end{align*}
  where the second one is using $\|[\Delta_U\ \ \Delta_V]\|_F
  \le\frac{c\rho}{4\sqrt{\nu}\|\mathcal {A}\|+16\mu\sigma_1}$.
  Along with Lemma \ref{subdiff-Thetarho},
  \begin{align*}
   {\rm dist}^2(0,\partial_U\Theta_{\rho}(U,V))
  &\ge\sum_{j\in J_U\backslash J}\big[\frac{c\rho}{2}-\!\sqrt{\nu}\|\mathcal{A}\|\|V_{j}\|
      -2\mu\sigma_1(\|U_{j}\|+\|V_{j}\|)\big]^2\nonumber\\
  &\ge\Big[\frac{c\rho}{2}-(\sqrt{\nu}\|\mathcal {A}\|+4\mu\sigma_1)\|[\Delta_U\ \ \Delta_V]\|_F\Big]^2
  \ge\frac{c^2\rho^2}{16},
 \end{align*}
 which together with $\Theta_{\rho}(U,V)-\Theta_{\rho}(\overline{U},\overline{V})<1/2$ implies that
 \begin{equation}\label{Case1-result}
  {\rm dist}^2(0,\partial\Theta_{\rho}(U,V))
   \geq{\rm dist}^2(0,\partial_U \Theta_{\rho}(U,V))
   \ge\frac{32}{c^2\rho^2}\big[\Theta_{\rho}(U,V)-\Theta_{\rho}(\overline{U},\overline{V})\big].
  \end{equation}

  \noindent
  {\bf Case 2:} $J\ne J_{V}$. Using the same arguments
  as those for Case 1 yields \eqref{Case1-result}.

  \medskip
  \noindent
  {\bf Case 3:} $J=J_U=J_{V}$. In this case, by invoking \eqref{varphij} and
  Lemma \ref{subdiff-Thetarho}, we have
  \begin{align*}
  {\rm dist}^2(0,\partial \Psi(U,V))
  &={\rm dist}^2(0,\partial_U \Psi(U,V))+{\rm dist}^2(0,\partial_V \Psi(U,V))\nonumber\\
  &=\nu^2\big[\|[\mathcal {A}^*\mathcal{A}(UV^{\mathbb{T}}-M)]V\|_F^2
     +\|U^{\mathbb{T}}[\mathcal {A}^*\mathcal {A}(UV^{\mathbb{T}}-M)]\|_F^2\big]\nonumber\\
  &\quad +\mu^2\big[\|U(U^{\mathbb{T}}U-V^{\mathbb{T}}V)\|_F^2+\|(V^{\mathbb{T}}V-U^{\mathbb{T}}U)V^{\mathbb{T}}\|_F^2\big].
 \end{align*}
  In addition, $\Theta_{\rho}(U,V)-\Theta_{\rho}(\overline{U},\overline{V})
  =\frac{\nu}{2}\|\mathcal{A}(UV^{\mathbb{T}}-M)\|^2+\frac{\mu}{4}\|U^{\mathbb{T}}U-V^{\mathbb{T}}V\|_F^2$.
  Hence, using the same arguments as those for Theorem \ref{KLL20}, we can obtain
  \[
    {\rm dist}^2(0,\partial\Theta_{\rho}(U,V))
    \ge\gamma(\Theta_{\rho}(U,V)-\Theta_{\rho}(\overline{U},\overline{V})).
  \]
  Together with Case 1 and Case 2, the desired result follows.
 \end{proof}

 Finally, it is worthwhile to point out that when the linear operator $\mathcal{A}$
 is obtained by full sampling, i.e., $\mathcal{A}^*\mathcal{A}(X)=X$ for all
 $X\in\mathbb{R}^{m\times n}$, the functions $\Psi$ and $\Theta_{\rho}$
 associated to $\nu>\frac{4}{\sigma_r^2}$ have the KL property of exponent
 $1/2$ over the set of their global minimizers.
  \section{Numerical experiments}\label{sec5}

  In this section, we illustrate the KL property of exponent $1/2$ of
  $\Psi$ and $\Theta_{\rho}$ by applying the proximal linearized alternating
  minimization (PLAM) method to the problems \eqref{prob-balance} and \eqref{prob-DC},
  respectively, where $f$ is specified as in \eqref{least-squares} with $b=\mathcal{A}(M)$
  for a low-rank $M$. For the function $\Theta_{\rho}$, we choose
  $\phi(t):=\frac{a-1}{a+1}t^2+\frac{2}{a+1}t\ (a>1)$ for $t\in\mathbb{R}$.
  An elementary calculation yields that the conjugate function of $\psi$
  takes the following form
  \begin{equation}\label{psi-star}
  \psi^*(s)=\left\{\begin{array}{cl}
                      0 & \textrm{if}\ s\leq \frac{2}{a+1};\\
                      \frac{((a+1)s-2)^2}{4(a^2-1)} & \textrm{if}\ \frac{2}{a+1}<s\leq \frac{2a}{a+1};\\
                      s-1 & \textrm{if}\ s>\frac{2a}{a+1}.
                \end{array}\right.
 \end{equation}
  So, Assumption \ref{assump} holds with $\varpi=\frac{2}{a+1}$ and $c=1$.
  Set $\lambda=1/\nu$ and $\widetilde{\mu}=\mu/\nu$. Write
  $g_{\lambda,\rho}(t)\equiv \lambda\theta(\rho t)+\frac{\tau}{2}t^2$
  with $\tau=\frac{\lambda(a+1)\rho^2}{2(a-1)}$. Along with $\theta(t)=|t|-\psi^*(|t|)$,
  it is easy to check that $g_{\lambda,\rho}$ is convex.
  For the problem \eqref{prob-balance}, let $\Phi(U,V)\equiv f(U,V)+\frac{\widetilde{\mu}}{4}\|U^{\mathbb{T}}U\!-\!V^{\mathbb{T}}V\|_F^2$
  and $h(t)\equiv \lambda{\rm sign}(|t|)$; while for \eqref{prob-DC},
  let $\Phi(U,V)\!\equiv f(U,V)+\frac{\widetilde{\mu}}{4}\|U^{\mathbb{T}}U\!-V^{\mathbb{T}}V\|_F^2
  -\frac{\tau}{4}\big[\|U\|_F^2+\|V\|_F^2\big]$ and $h(t)\equiv g_{\lambda,\rho}(t)$.
  Then, the problems \eqref{prob-balance} and \eqref{prob-DC}
  can be written as
  \begin{align}\label{Eprob-DC}
   &\min_{U\in\mathbb{R}^{m\times \kappa},V\in\mathbb{R}^{n\times \kappa}}
   \Big\{\Phi(U,V)+\frac{1}{2}\sum_{j=1}^{\kappa}\big[h(\|U_j\|)+h(\|V_j\|)\big]\Big\}.
 \end{align}
  We apply the PLAM method \cite{Bolte14,Xu13} to solving \eqref{Eprob-DC}
  and its iterate steps are as follows.
 \begin{algorithm}[!h]
  \caption{\label{APLAM}{(\bf Accelerated PLAM method for solving \eqref{Eprob-DC})}}
  \textbf{Initialization:} Select an integer $\kappa\ge 1$, an appropriate $\lambda>0$,
  and constants $L_U,L_V>0$. Choose $(U^{-1},V^{-1})=(U^0,V^0)\in\mathbb{R}^{m\times\kappa}
  \times\mathbb{R}^{n\times\kappa}$ and $t_0=t_{-1}=1$. Set $k=0$.\\
 \textbf{while} the stopping conditions are not satisfied \textbf{do}
  \begin{itemize}
   \item Set $\widetilde{U}^{k}=U^{k}+\frac{t_{k-1}-1}{t_k}(U^{k}\!-\!U^{k-1})$ and
         $\widetilde{V}^{k}=V^{k}+\frac{t_{k-1}-1}{t_k}(V^{k}\!-\!V^{k-1})$;

   \item Solve the following two minimization problems
         \begin{subequations}
         \begin{align*}
          U^{k+1}\in\mathop{\arg\min}_{U\in\mathbb{R}^{m\times\kappa}}
           \Big\{\langle\nabla_{U}\Phi(\widetilde{U}^k,{V}^k),U\!-\!\widetilde{U}^k\rangle
           +\frac{L_U}{2}\|U\!-\!\widetilde{U}^k\|_F^2+\frac{\lambda}{2}\sum_{j=1}^{\kappa}h(\|U_j\|)\Big\},\\
           V^{k+1}\in\mathop{\arg\min}_{V\in\mathbb{R}^{n\times\kappa}}
           \Big\{\langle\nabla_{V}\Phi(U^{k+1},\widetilde{V}^k),V\!-\!\widetilde{V}^k\rangle
           +\frac{L_V}{2}\|V\!-\!\widetilde{V}^k\|_F^2+\frac{\lambda}{2}\sum_{j=1}^{\kappa}h(\|V_j\|)\Big\}.
           \end{align*}
         \end{subequations}

   \item Set $t_{k+1}:=\frac{1+\sqrt{1+4t_k^2}}{2}$ and $k\leftarrow k+1$.
  \end{itemize}
 \textbf{end while}
 \end{algorithm}
  \begin{remark}\label{remark-Alg}
  {\bf(i)} The constants $L_U$ and $L_V$ in Algorithm \ref{APLAM} are an upper
  estimation for the Lipschitz constant of $\nabla_U\Phi(\cdot,V)$ and
  $\nabla_V\Phi(U,\cdot)$, respectively, over a certain compact set of $(U,V)$
  which includes the iterate sequence $\{(U^k,V^k)\}$.

  \medskip
  \noindent
  {\bf(ii)} Whether $h(t)\equiv {\rm sign}(|t|)$ or $h(t)\equiv g_{\lambda,\rho}(t)$,
   one may easily achieve a global optimal solution of the subproblems since
   their proximal operators have a closed form. Notice that Algorithm \ref{APLAM}
   is an accelerated type of the PLAM proposed in \cite{Bolte14},
   and for its global convergence and linear rate of convergence analysis,
   the reader may refer to \cite{Xu13}.

  \medskip
  \noindent
  {\bf(iii)} By comparing the optimal conditions of the two subproblems with
  that of \eqref{Eprob-DC}, when
  \begin{subnumcases}{}\label{cond1}
    \frac{\|\nabla_{U}\Phi(\widetilde{U}^k,{V}^k)-\nabla_{U}\Phi(U^{k+1},{V}^{k+1})
    +L_U(U^{k+1}-\widetilde{U}^k)\|}{1+\|b\|}\le \epsilon,\\
    \label{cond2}
   \frac{\|\nabla_{V}\Phi(U^{k+1},\widetilde{V}^{k})-\nabla_{V}\Phi(U^{k+1},{V}^{k+1})
    +L_V(V^{k+1}-\widetilde{V}^k)\|}{1+\|b\|}\le \epsilon
  \end{subnumcases}
  holds for a pre-given tolerance $\epsilon>0$, we terminate Algorithm \ref{APLAM}
  at the iterate $(U^{k+1},V^{k+1})$.
  \end{remark}

  For the subsequent testing, the starting point $(U^0,V^0)$ of Algorithm \ref{APLAM}
  is always chosen as $(P{\rm Diag}(\sqrt{\sigma^{\kappa}(X^0)}),
  Q{\rm Diag}(\sqrt{\sigma^{\kappa}(X^0)}))$ with $(P,Q)\in\mathbb{O}^{m,n}(X^0)$
  for $X^0=\mathcal{A}^*(\mathcal {A}(M))$, where $\sigma^{\kappa}(X^0)\in\mathbb{R}^{\kappa}$
  is the vector consisting of the first $\kappa\ge r$ components of $\sigma(X^0)$.
  It should be emphasized that such a starting point is not close to the bi-factors
  of $M$ unless $\kappa=r$. Unless otherwise stated, the tolerance $\epsilon$ in
  \eqref{cond1}-\eqref{cond2} is chosen as $10^{-10}$. All numerical results are
  computed by a laptop computer running on 64-bit Windows Operating System
  with an Intel(R) Core(TM) i7-7700 CPU 2.8GHz and 16 GB RAM.
  \subsection{Illustration of the linear convergence}\label{subsec5.1}

  We take $M\in\mathbb{R}^{m\times n}$ with $m=n=4000$ and $r=10$
  for example to illustrate the linear convergence of the iterate sequence.
  For this purpose, we apply Algorithm \ref{APLAM} to the problem \eqref{Eprob-DC}
  associated to \eqref{prob-balance} with $\kappa=30r$, where the linear operator $\mathcal{A}$
  is obtained by the uniform sampling with the sample ratio $8.99\%$, and the parameters
  $\widetilde{\mu}$ and $\lambda$ are set as $10^{-3}$ and $150\|\mathcal{A}^*(\mathcal{A}(M))\|$,
  respectively. Figure \ref{fig1} plots the iteration error curves and the time curve,
  where $(U^f,V^f)$ is the final output of Algorithm \ref{APLAM}. Since
  the relative error $\|U^f(V^f)^{\mathbb{T}}-M\|_F/\|M\|_F\le 2.04\times 10^{-12}$,
  we conclude that $(U^f,V^f)$ is a global optimal solution of \eqref{prob-balance}.
  The subfigure on the left hand shows that the sequence $\{(U^k,V^k)\}$
  generated by Algorithm \ref{APLAM} indeed converges linearly to $(U^f,V^f)$.
  We also use Algorithm \ref{APLAM} to solve the problem \eqref{Eprob-DC}
  associated to \eqref{prob-DC} with $\kappa=30r$, where $\mathcal{A}$ is obtained
  by the uniform sampling with the sample ratio $7.49\%$, and the parameters
  $\widetilde{\mu},\lambda$ and $\rho$ are respectively set as
  $10^{-3},\frac{a+1}{2}(0.05\|\mathcal{A}^*(\mathcal{A}(M))\|)^2$
  and $\frac{40}{(a+1)\|\mathcal{A}^*(\mathcal{A}(M))\|}$ for $a=3.7$.
  Figure \ref{fig2} plots the iteration error curves and the time curve,
  and the iteration error curves show that the sequence $\{(U^k,V^k)\}$ is linearly convergent.
  Since the relative error $\|U^f(V^f)^{\mathbb{T}}-M\|_F/\|M\|_F\le 2.69\times 10^{-12}$,
  we conclude that the sequence $\{(U^k,V^k)\}$ also converges linearly to a global optimal
  solution $(U^f,V^f)$.

\begin{figure}[H]
  \setlength{\abovecaptionskip}{-0.2cm}
  \centerline{\epsfig{file=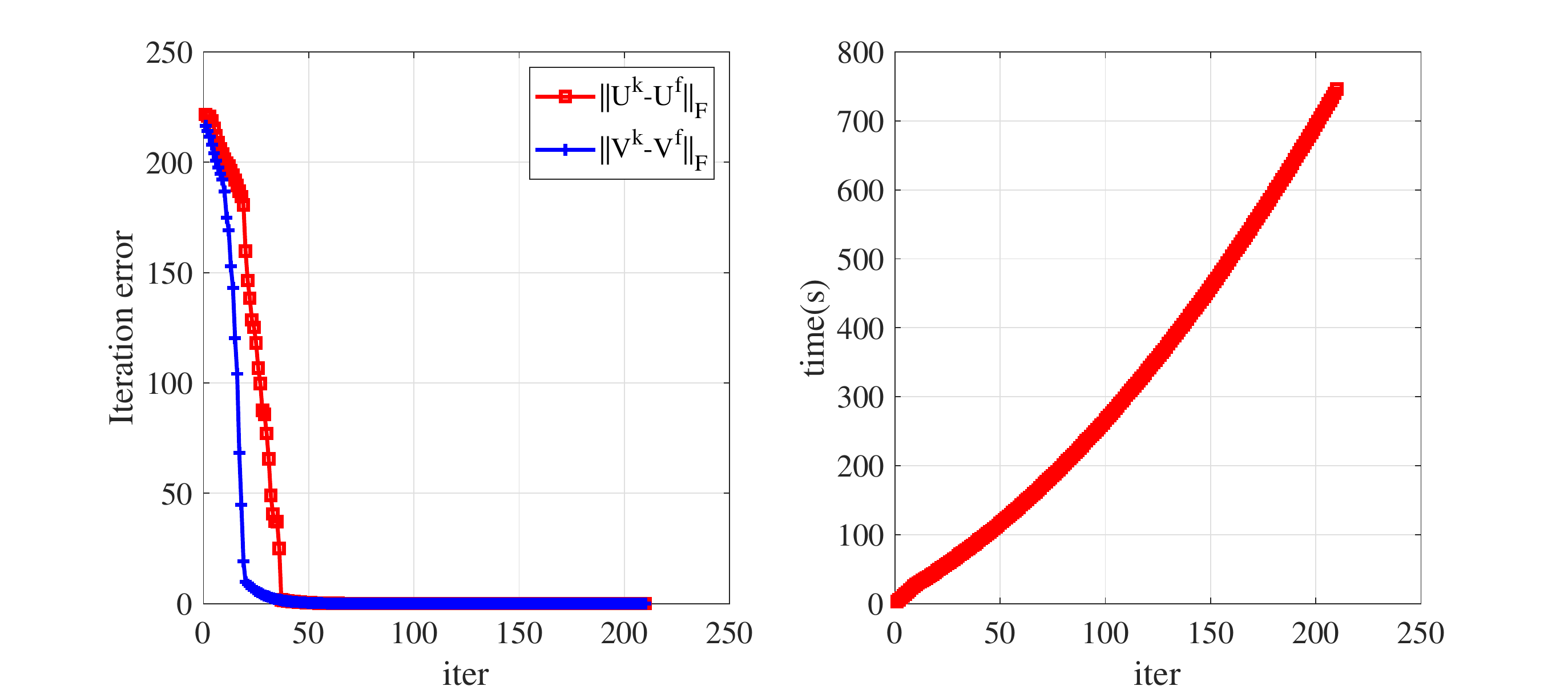,width=4.5in}}\par
 \caption{\small The iterate errors and computing time of Algorithm \ref{APLAM} for minimizing $\Psi$}
 \label{fig1}
 \end{figure}

 
 \begin{figure}[H]
  \setlength{\abovecaptionskip}{-0.2cm}
  \centerline{\epsfig{file=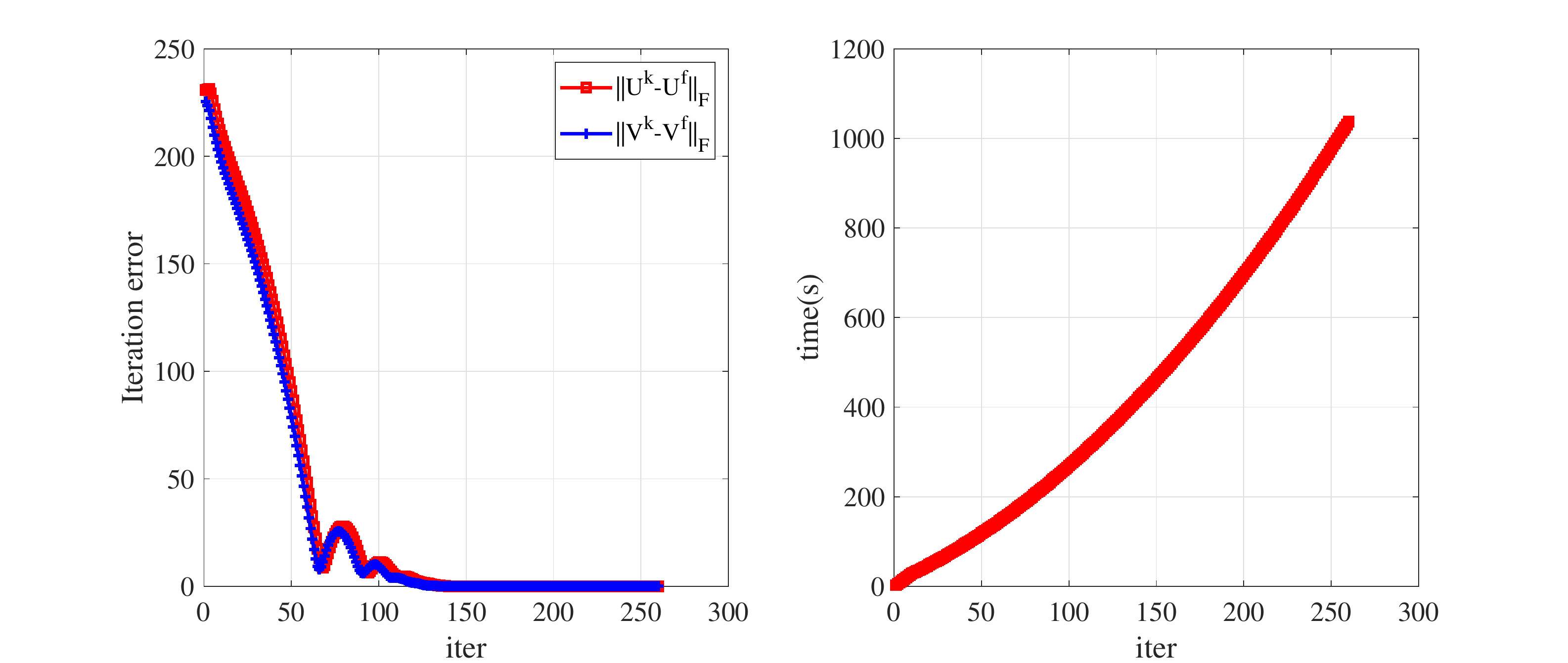,width=4.5in}}\par
 \caption{\small The iterate errors and computing time of Algorithm \ref{APLAM} for minimizing $\Theta_{\rho}$}
 \label{fig2}
 \end{figure}
 
 \subsection{Influence of $\lambda$ on the linear convergence}\label{subsec5.2}

  We take $M\in\mathbb{R}^{m\times n}$ with $m=n=2000$ and $r=10$ for example
  to illustrate the influence of $\lambda$ on the linear convergence of the iterates.
  To that end, we use Algorithm \ref{APLAM} to solve the problem \eqref{Eprob-DC}
  with $\kappa=20r$ and $\widetilde{\mu}=10^{-3}$, where the linear operator
  $\mathcal{A}$ is obtained by the uniform sampling with the sample ratio $13.3\%$.
  For the problem \eqref{Eprob-DC} associated to \eqref{prob-balance},
  we set $\lambda_i=c_i\|\mathcal{A}^*(\mathcal{A}(M))\|$,
  while for the problem \eqref{Eprob-DC} associated to \eqref{prob-DC} set
  $\lambda_i=\frac{a+1}{2}(c_i\|\mathcal{A}^*(\mathcal{A}(M))\|)^2$
  with $a=3.7$ and $\rho_i=\frac{2}{(a+1)c_i\|\mathcal{A}^*(\mathcal{A}(M))\|}$.
  Figure \ref{fig3} plots the iteration error curve $\|U^k-U^f\|_F$ corresponding to
  four different $\lambda_i\ (i=0,1,2,3)$.
   \begin{figure}[H]
  \setlength{\abovecaptionskip}{-0.2cm}
  \centerline{\epsfig{file=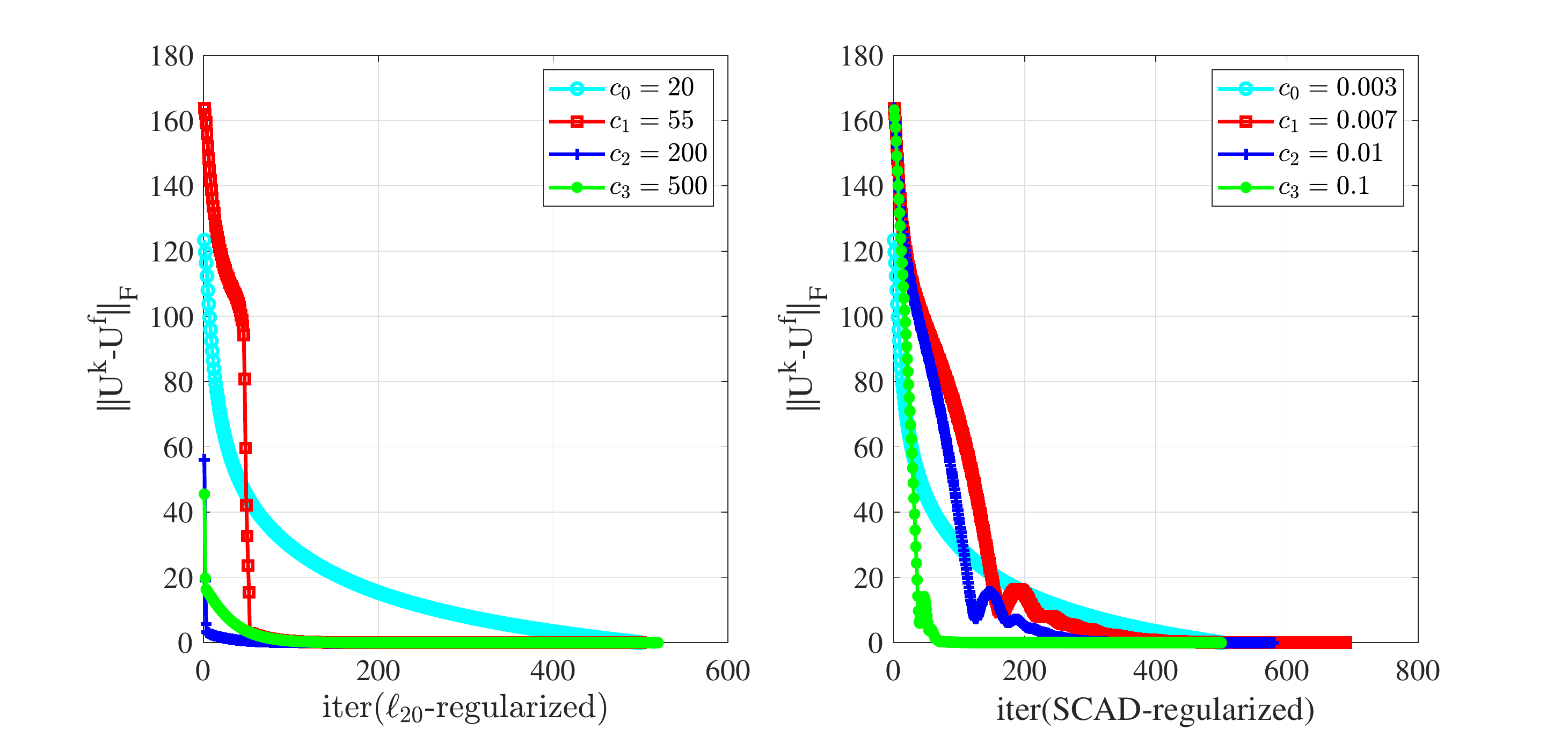,width=4.5in}}\par
 \caption{\small The iterate errors of Algorithm \ref{APLAM} for minimizing $\Psi$
  and $\Theta_{\rho}$ with different $\lambda$}
 \label{fig3}
 \end{figure}
%
  We see that, when minimizing $\Psi$ and $\Theta_{\rho}$ with $\lambda\ge\lambda_2$,
  the iterate sequence displays a linear convergence, but it does not have the linear
  convergence when minimizing $\Psi$ and $\Theta_{\rho}$ with $\lambda=\lambda_1$.
  We check that for $\lambda=\lambda_1$ the final output $(U^f,V^f)$ still has a full
  rank $200$ since $\lambda=\lambda_1$ is too small. This does not contradicts the result
  of Theorem \ref{KLL20} since our starting point is not close to the bi-factors of $M$.
  In addition, the iterate sequence also displays a linear convergence when
  minimizing $\Psi$ and $\Theta_{\rho}$ with $\lambda=\lambda_4$, though
  the final output does not recover $M$ since the relative error
  $\|U^f(V^f)^{\mathbb{T}}-M\|_F/\|M\|_F\ge 0.13$. Since the nonzero-columns of
  $U^f$ and $V^f$ are equal to $r$, this performance of this example does not
  contradicts the result of Example \ref{example41}. In other words,
  the functions $\Psi$ and $\Theta_{\rho}$ may have the KL property of exponent $1/2$
  over the set of stationary points for which the number of nonzero columns equals
  the rank of the true matrix $M$.
 \section{Conclusions}\label{sec6}

  For the rank regularized loss minimization model, we have proposed
  an $\ell_{2,0}$-norm regularized factored formulation and derived
  some equivalent DC surrogates from the global exact penalty of
  its MPEC reformulation. For these nonsmooth factored models,
  we take a local view to study the KL property of exponent $1/2$
  of their objective functions over the set of their global minimizers,
  which means that gradient descent or alternating minimization methods
  with a well-chosen starting point will converge linearly to an optimal solution.
  Numerical testing in Section \ref{sec5} indicates that the proposed regularized
  factored models are successful even if the chosen starting point is far from
  the set of global optima. This implies that a good geometric landscape may
  exist for these nonsmooth factored functions. We will leave this topic
  for our future study. In addition, we will focus on the KL property of
  exponent $1/2$ for $\Psi$ and $\Theta_{\rho}$ under the noisy setting.

\end{document}